\newtheorem{theorem}{Theorem}
\newtheorem{lemma}[theorem]{Lemma}
\newtheorem{prop}[theorem]{Proposition}
\newtheorem*{prop*}{Proposition}
\newtheorem*{cor*}{Corollary}
\newtheorem{corollary}[theorem]{Corollary}
\theoremstyle{definition}
\newtheorem{definition}[theorem]{Definition}
\newtheorem{example}[theorem]{Example}
\newtheorem{remark}[theorem]{Remark}
\newtheorem*{remark*}{Remark}
\newtheorem*{example*}{Example}
\newcommand{\Z}{{\mathbb {Z}}}
\newcommand{\R}{{\mathbb {R}}}
\newcommand{\secat}{{\sf {secat}}}
\newcommand{\RP}{{\mathbb {RP}}}
\newcommand{\h}{{\mathfrak h}}
\newcommand{\TC}{{\sf TC}}
\newcommand{\vv}{{\mathfrak  v}}
\newcommand{\e}{{\mathbf e}}
\newcommand{\VV}{{\mathcal  V}}
\title{Parametrized topological complexity of bundles of real projective spaces, I}
\author{Michael Farber}
\address{School of Mathematical Sciences \\
Queen Mary University of London\\
London, E1 4NS\\
United Kingdom}
\email{m.farber@qmul.ac.uk}
\author{Amit Kumar Paul}
\address{Stat-Math Unit, Indian Statistical Institute, Kolkata 700108, India}
\email{amitkrpaul23@gmail.com}
\author{Lucile Vandembroucq}
\address{Centro de Matem\'atica, Universidade do Minho, Braga,
Portugal}
\email{lucile@math.uminho.pt}
\begin{document}
\subjclass[2000]{Primary 55M30; Secondary 55P99}
%
%
%
%
%
%
\begin{abstract} Analysis of motion algorithms for autonomous systems operating under variable external conditions leads to the concept of parametrized topological complexity \cite{CFW}. In \cite{CFW}, \cite{CFW2} the parametrized topological complexity was computed in the case of the Fadell - Neuwirth bundle which is pertinent to algorithms of collision free motion of many autonomous systems in $\R^d$ avoiding collisions with multiple obstacles. The parametrized topological complexity of sphere bundles was studied in detail in \cite{FW}, \cite{FW2}, \cite{FP}. 
In this paper we make the next step by studying parametrized topological complexity of bundles of real projective spaces which arise as projectivisations of vector bundles. This leads us to new problems of algebraic topology involving theory of characteristic classes and geometric topology. 
We establish sharp upper bounds 
for parametrized topological complexity $\TC[p:E\to B]$ improving the general upper bounds. We develop algebraic machinery for computing lower bounds 
for $\TC[p:E\to B]$
based on the Stiefel - Whitney characteristic classes. Combining the lower and the upper bounds we compute explicitly many specific examples. 
\end{abstract}
\thanks{M. Farber and A. K. Paul were supported by a DMS-EPSRC grant}
\thanks{The research of L. Vandembroucq was partially financed by Portuguese
Funds through FCT (Funda\c c\~ao para a Ci\^encia e a Tecnologia) within
the Project UID/00013: Centro de Matem\'atica da Universidade do Minho
(CMAT/UM)}
\maketitle
\section{Introduction}

The concept of topological complexity is inspired by the robot motion planning problem of robotics \cite{F}, \cite{F2} it reflects the 
complexity of motion algorithms for autonomous systems. Parametrized topological complexity is a version of topological complexity adopted to situations when autonomous systems move under variable external conditions \cite{CFW}, \cite{CFW2}. Mathematically, parametrized topological complexity $\TC[p:E\to B]$ is a numerical invariant of a fibration $p:E\to B$ where the base $B$ represents the configuration space of the external conditions and $E$ is the configuration space of the compound system consisting of the controlled system and the external conditions. 

In this paper we analyse in detail the case when the bundle $p:E\to B$ is obtained from a vector bundle $\xi$ of rank $d+1$ over a CW complex $B$ by projectivisation, i.e. by taking the projective space of each fibre. The obtained {\it  projective bundle} is a locally trivial bundle with fibre $\RP^d$, the $d$-dimensional real projective space, and our goal is to express its parametrized topological complexity through the classical invariants of the vector bundle $\xi$. The Stiefel - Whitney classes of $\xi$ play a central role in our approach. 

The techniques we employ are slightly different in the cases $d=1$ and $d>1$, and we study these cases separately. 

In \S \ref{sec:3} we focus on the case $d=1$ -  bundles with fibre the circle. We show that the general upper bound of Proposition \ref{prop1} is never sharp for bundles with fibre $S^1$. The main results of this section, Corollary \ref{cor:9} and Theorem \ref{thm:10}, give more informative upper bounds. Combining these upper bounds with the lower bound given by Theorem 2 from \cite{FW} we are able to compute many examples and evaluate precisely the parametrized topological complexity. 
Material of this section is supported by Appendix \ref{sec:app1} which is related to a result of Ghys \cite{Ghys}. 

In section \S \ref{sec:2a} we study the cohomology algebras of projective bundles and derive the cohomological lower bound of Corollary \ref{cor:13}. This lower bound operates with algebraic structures built upon the cohomology algebra of the base $H^\ast(B;\Z_2)$ and the Stiefel - Whitney classes $w_i(\xi)$ of the vector bundle $\xi$. 

In the following section \S \ref{sec:5} we establish sharp upper bounds for parametrized topological complexity of projective bundles of dimension $d\ge 2$. The main result of this section is Theorem \ref{thm:sharp}; its proof uses Theorem \ref{maxsecatmanifold} proven in Appendix \ref{app:2}. Using the developed technique we are able to compute many examples in \S \ref{sec:6}. As a surprise we notice that in the case when the base $B=S^1$ is the circle and $\xi$ is a non-orientable vector bundle of rank 16 over $B$ we obtain a bundle of projective spaces $p:E \to B$ with fibre $\RP^{15}$ for which $$\TC[p:E\to B]=30.$$ This strongly contrasts with the fact that $$\TC(\RP^{15})=22.$$ 
This example illustrates that even in the case of a one-dimensional base the parametrized topological complexity can be significantly higher than the usual topological complexity of the fibre.

\section{Parametrized topological complexity}\label{sec:1}

In this section we briefly recall the notion of parametrized topological complexity which was introduced in \cite{CFW},  \cite{CFW2}. 
It is a generalisation  of the concept of topological complexity of robot motion planning problem, \cite{F}, \cite{F2}, which takes into account the state of external conditions. 

Let $X$ be a path-connected topological space viewed as the space of states of a mechanical system. The motion planning problem of robotics asks for an algorithm which takes as input an initial state and a desired state of the system, and produces as output a continuous motion of the system from the initial state to the desired state, see \cite{LV}. 
Given a pair of states $(x_0, x_1)\in X\times X$, the algorithm should produce a continuous path $\gamma: I\to X$ with 
$\gamma(0)=x_0$ and $\gamma(1)=x_1$, where $I=[0,1]$ denotes the unit interval representing time. 

Let $X^I$ denote the space of all continuous paths in $X$, equipped with the compact-open topology. 
The map $\pi: X^I \to X\times X$, where $\pi(\gamma) =(\gamma(0), \gamma(1))$,  is a fibration in the sense of Hurewicz.  A solution of the motion planning problem, {\it a motion planning algorithm}, is a section of this fibration, i.e. a map $s: X\times X \to X^I$ with $\pi\circ s ={\rm id}_{X\times X}$. 
If $X$ is not contractible, no section can be continuous, see \cite{F}.
{\it The topological complexity} of $X$ is defined to be the sectional category, or Schwarz genus, of the fibration 
$\pi: X^I \to X\times X$; notation: $\TC(X) =\secat(\pi)$. Explicitly, $\TC(X)$ is the smallest integer $k$ for 
 which there exists an open cover $$X\times X = U_0\cup U_1\cup \dots\cup U_k$$ such that 
  the fibration $\pi$ admits a continuous section $s_j : U_j \to X^I $ for each $j=0, 1, \dots, k$. 
 
In the parametrized setting initiated in \cite{CFW}, one assumes that the motion of the system is constrained 
by external conditions, such as obstacles or variable geometry of the containing domain. 
 The initial and terminal states of the system, as well as the motion between them, must live under the same external conditions.  
 This situation is modelled by a fibration 
 \begin{eqnarray}\label{eq:fibration}
 p: E \to  B
 \end{eqnarray} 
 with path-connected fibres, where the base $B$ is a topological space encoding the variety of external conditions. For $b \in B$, the fiber 
 $X_b = p^{-1}(b)$ is the space of achievable configurations of the system given the constraints imposed by $b$. 
 
 {\it A parametrized motion planning algorithm} takes as input the pairs of the initial and the terminal states of the system (consistent with identical external conditions), and produces a continuous  motion between them, achievable under the same external conditions. 
 In other words, the initial and the terminal states, as well as the path between them produced by the algorithm, all lie within the same fibre of the fibration (\ref{eq:fibration}). The state of the external conditions is implicitly also part of the input, as it coincides with the image of the initial and the final states under the projection (\ref{eq:fibration}).
 
 To define the notion of {\it  parametrized topological complexity} of the fibration $p: E \to B$ one needs to introduce  
 the associated fibration $\Pi: E^I_B \to E^2_B,$ where $E^2_B$ denotes the space of all pairs of configurations $(x_0, x_1)\in E^2=E\times E$ lying in the same fiber of $p$, i.e. $p(x_0)=p(x_1)$, while $E_B^I$ stands for the space of continuous paths in $E$ lying in a single fiber of $p$; the map $\Pi$ sends a path to the pair of its end points.
 \begin{definition}\label{def1}
 The parametrized topological complexity  $\TC[p:E\to B]$
 of the fibration $p: E \to B$ is defined as the sectional category of the associated fibration 
 \begin{eqnarray}\label{fib}
 \Pi: E_B^I \to E^2_B, \quad \Pi(\gamma)=(\gamma(0), \gamma(1)).\end{eqnarray}
In other words, 
$$\TC[p:E\to B]:=\secat[\Pi:E_B^I \to E^2_B],$$ is the minimal integer $k$ such that the space 
$E^2_B$ admits an open cover $$E^2_B=U_0\cup U_1\cup \dots\cup U_k$$ with the property that each set $U_i$ admits a continuous section 
$s_i:U_i\to E^I_B$ 
of $\Pi$, where $i=0, 1, \dots, k$.
\end{definition}

Note that $\Pi: E_B^I \to E^2_B$ is a Hurewicz fibration assuming that $p: E\to B$ is a Hurewicz fibration, see \cite{CFW2}, Proposition 2.1. 

The statement below is partially based on \cite{JGC}, it  explains the relevance of the notion of parametrized topological complexity to the problem of constructing motion planning algorithms:

\begin{prop}[Proposition 4.7 from \cite{CFW}]\label{prop3} If $p: E \rightarrow  B$ is a locally trivial fibration, and the spaces $E$ and $B$ are metrizable separable ANRs, then in Definition \ref{def1}, instead of open covers one may use arbitrary covers of $E^2_B$ or, equivalently, arbitrary partitions
$$E^2_B=F_0\sqcup F_1\sqcup \cdot \cdot \cdot \sqcup F_k, \quad F_i\cap F_j=\emptyset ,\quad i\not=j$$
admitting continuous sections $s_i : F_i \rightarrow  E_B^I$ of $\Pi$, where $i = 0, 1, . . . , k.$
\end{prop}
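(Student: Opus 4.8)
The plan is to isolate the single non-formal assertion hidden in the statement and then invoke the classical ``thickening'' technique for the Schwarz genus. Denote by $k_{\mathrm{open}}$, $k_{\mathrm{cov}}$ and $k_{\mathrm{part}}$ the smallest $k$ for which $E^2_B$ admits, respectively, an open cover, an arbitrary cover, or a partition into $k+1$ pieces each carrying a continuous section of $\Pi$. An open cover is in particular a cover, so $k_{\mathrm{cov}}\le k_{\mathrm{open}}$; a partition is a cover, so $k_{\mathrm{cov}}\le k_{\mathrm{part}}$; and from an open cover $U_0,\dots,U_k$ with sections $s_i$ one obtains a partition $F_0=U_0$, $F_i=U_i\setminus(U_0\cup\dots\cup U_{i-1})$ into $k+1$ pieces still carrying sections (restrict $s_i$), whence $k_{\mathrm{part}}\le k_{\mathrm{open}}$. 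Thus the proposition reduces to the single inequality $k_{\mathrm{open}}\le k_{\mathrm{cov}}$: from an arbitrary cover $E^2_B=A_0\cup\dots\cup A_k$ together with sections $\sigma_i\colon A_i\to E^I_B$ of $\Pi$ one must produce an \emph{open} cover of the same cardinality with sections.

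Two structural facts enter. First, $\Pi\colon E^I_B\to E^2_B$ is a Hurewicz fibration by \cite{CFW2}, Proposition 2.1, as already noted. Second, $E^2_B$ is a separable metrizable ANR: it is a subspace of $E\times E$, hence separable and metrizable because $E$ is, and the first projection $E^2_B\to E$, being the pullback of $p$ along $p$, is a locally trivial bundle whose fibre coincides with the fibre $F$ of $p$; now $F$ is a retract of an open subset $p^{-1}(V)\cong V\times F$ of the ANR $E$, hence a separable metrizable ANR, so $E^2_B$ is locally a product $V\times F\times F$ of ANRs and therefore, being metrizable, an ANR by Hanner's theorem. (The identical argument, with $V\times F^I$ in place of $V\times F\times F$, shows that $E^I_B$ is a separable metrizable ANR as well.) Granting these, the argument of \cite{JGC} applies: by normality of $E^2_B$ pass to a closed shrinking $\{C_i\}$ of $\{A_i\}$; since $E^I_B$ is an ANR, each $\sigma_i|_{C_i}$ extends to a map $\widetilde\sigma_i\colon W_i\to E^I_B$ on some open $W_i\supseteq C_i$; the composite $\Pi\circ\widetilde\sigma_i\colon W_i\to E^2_B$ coincides with the inclusion on $C_i$, so on a small enough open neighbourhood $U_i$ of $C_i$ inside $W_i$ it is homotopic to the inclusion rel $C_i$; lifting this homotopy through $\Pi$ starting from $\widetilde\sigma_i$ yields a genuine section of $\Pi$ over $U_i$. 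The sets $U_0,\dots,U_k$ form an open cover with sections, so $k_{\mathrm{open}}\le k$.

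The one genuinely delicate point --- and the reason the ANR hypotheses are indispensable rather than cosmetic --- is the correction step: the homotopy from $\Pi\circ\widetilde\sigma_i$ to the inclusion must be chosen \emph{stationary over} the closed set $C_i$ on which $\widetilde\sigma_i$ is already a section, for otherwise lifting it would destroy the section already present there. Such a homotopy exists on a sufficiently small neighbourhood of $C_i$ precisely because $E^2_B$ is a metrizable ANR: two maps into a metrizable ANR which agree on a subset and are uniformly close nearby are homotopic rel that subset. Combined with the homotopy lifting property of $\Pi$ this produces the section. All of this is carried out in detail in \cite{JGC} and \cite{CFW}; for the present paper it is enough to have checked the two structural facts above and then to quote those sources.
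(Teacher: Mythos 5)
The paper itself offers no proof of this statement -- it is imported verbatim from \cite{CFW}, whose own proof rests on \cite{JGC} -- so what you have written is a reconstruction rather than a variant of an argument in the text. Your formal reductions are correct (the only non-trivial implication is $k_{\mathrm{open}}\le k_{\mathrm{cov}}$), and your verification that $E^2_B$ and $E^I_B$ are separable metrizable ANRs is accurate and is genuinely the part of the work that falls to the present paper rather than to the cited sources.

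However, the sketch of the hard implication contains a real gap at its very first step: an \emph{arbitrary} cover of a normal space need not admit a closed shrinking. Take $Y=[0,1]$, $A_0=\mathbb{Q}\cap[0,1]$, $A_1=[0,1]\setminus\mathbb{Q}$. If $C_i\subseteq A_i$ were closed with $C_0\cup C_1=[0,1]$, then by Baire category one of them would have nonempty interior; $C_0$ is countable, so $C_1$ would contain an interval, contradicting $C_1\subseteq A_1$. This is not a peripheral technicality: both $A_0$ and $A_1$ are zero-dimensional, hence carry continuous sections of any fibration over $[0,1]$, so this is exactly the kind of cover the proposition must handle, and the shrink-then-thicken route collapses on it. What your argument actually proves is $k_{\mathrm{open}}\le k_{\mathrm{closed}}$ (and, with the locally-closed refinement, the case of ENR or locally compact pieces, which is the classical argument going back to Farber's treatment of $\TC$). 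The passage from genuinely arbitrary subsets to open ones cannot be achieved by extending $\sigma_i$ off its domain either, since the ANR extension property applies only to closed subsets. That passage is precisely the content of \cite{JGC}, which proceeds through a different mechanism (a Whitehead/Ganea-type characterization of sectional and relative category) rather than through shrinking and thickening. Your closing sentence -- check the ANR hypotheses and quote \cite{JGC}, \cite{CFW} for the rest -- is the right disposition and is what the paper does; but the intervening sketch misattributes to \cite{JGC} an argument that does not work in the stated generality, and should either be deleted or replaced by an honest citation of the actual theorem being used.
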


Next we mention the property of invariance with respect to fibre homotopy equivalence:

\begin{prop}[Proposition 5.1 from \cite{CFW}] If the fibrations $p: E \to B$ and $p' : E' \to B$ are fiberwise homotopy equivalent,
then $\TC[p: E \to B] = \TC[p' : E' \to B].$
\end{prop}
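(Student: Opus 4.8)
The plan is to promote a fibrewise homotopy equivalence between $p$ and $p'$ to a comparison between the two sectional-category fibrations $\Pi$ and $\Pi'$, and then to observe that such a comparison — a strictly commuting square of fibrations whose horizontal maps are homotopy equivalences — forces the sectional categories to coincide. Fix a fibrewise homotopy equivalence $f\colon E\to E'$ over $B$ (so $p'\circ f=p$), a fibrewise homotopy inverse $g\colon E'\to E$ (so $p\circ g=p'$), and fibrewise homotopies $H\colon E\times I\to E$ from $g\circ f$ to $\id_E$ and $H'\colon E'\times I\to E'$ from $f\circ g$ to $\id_{E'}$, with $p\circ H_t=p$ and $p'\circ H'_t=p'$ for all $t\in I$. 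Post-composition with $f$ yields $f_\ast\colon E_B^I\to {E'}^I_B$, $\gamma\mapsto f\circ\gamma$: this is well defined because $p'\circ f=p$ sends a path lying in one fibre of $p$ to a path lying in one fibre of $p'$, and it is continuous in the compact--open topologies by the exponential law. Similarly $f$ induces $f^2\colon E_B^2\to {E'}^2_B$, $(x_0,x_1)\mapsto(f(x_0),f(x_1))$, and one checks directly that $\Pi'\circ f_\ast=f^2\circ\Pi$. The map $g$ induces $g_\ast$ and $g^2$ in the same way. Post-composing the homotopy $H$ with paths gives a homotopy $E_B^I\times I\to E_B^I$, $(\gamma,t)\mapsto H_t\circ\gamma$, from $g_\ast f_\ast=(g f)_\ast$ to $\id_{E_B^I}$ (each stage lands in $E_B^I$ because $H$ is fibrewise), and acting with $H$ on pairs of points gives a homotopy from $g^2f^2$ to $\id_{E_B^2}$; symmetrically with $H'$. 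Hence $f_\ast$ and $f^2$ are homotopy equivalences, with homotopy inverses $g_\ast$ and $g^2$.

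Now I would prove $\secat(\Pi')\le\secat(\Pi)$; the reverse inequality then follows by interchanging the roles of $p$ and $p'$. Put $k=\secat(\Pi)$ and choose an open cover $E_B^2=U_0\cup\cdots\cup U_k$ together with continuous sections $s_i\colon U_i\to E_B^I$ of $\Pi$. Set $U_i'=(g^2)^{-1}(U_i)$; these are open and cover ${E'}^2_B$. Over $U_i'$ define $\sigma_i=f_\ast\circ s_i\circ g^2\colon U_i'\to {E'}^I_B$. Using $\Pi'\circ f_\ast=f^2\circ\Pi$ and $\Pi\circ s_i=\id_{U_i}$ we get $\Pi'\circ\sigma_i=f^2\circ g^2|_{U_i'}$, which is homotopic, as a map $U_i'\to {E'}^2_B$, to the inclusion $U_i'\hookrightarrow {E'}^2_B$ because $f^2\circ g^2\simeq\id_{{E'}^2_B}$. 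Since $\Pi'$ is a Hurewicz fibration (\cite{CFW2}, Proposition 2.1, recalled above, using that $p'$ is a fibration), the homotopy lifting property applied to this homotopy with initial lift $\sigma_i$ produces a continuous $\bar s_i\colon U_i'\to {E'}^I_B$ with $\Pi'\circ\bar s_i$ equal to the inclusion $U_i'\hookrightarrow {E'}^2_B$; that is, $\bar s_i$ is a genuine section of $\Pi'$ over $U_i'$. Thus $\secat(\Pi')\le k$, and by symmetry $\TC[p\colon E\to B]=\secat(\Pi)=\secat(\Pi')=\TC[p'\colon E'\to B]$.

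The only step that is not pure bookkeeping is the rectification of the homotopy section $\sigma_i$ into an honest section $\bar s_i$: this is exactly where one invokes that $\Pi'$ is a fibration (equivalently, that $p'$ is a fibration, which is hypothesised), via the homotopy lifting property. Everything else — continuity of the post-composition and endpoint-evaluation maps, and the fact that all intermediate maps take values in the correct path or pair spaces — follows formally from $p'\circ f=p$, $p\circ g=p'$, the fibrewiseness of $H$ and $H'$, and standard compact--open adjunctions. One could also package the argument more conceptually: the induced map $E_B^I\to(f^2)^\ast{E'}^I_B$ over $E_B^2$ is a homotopy equivalence of total spaces, hence a fibre homotopy equivalence by Dold's theorem, so $\secat(\Pi)=\secat\big((f^2)^\ast\Pi'\big)$; and sectional category is unchanged under pullback along the homotopy equivalence $f^2$.
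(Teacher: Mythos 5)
The paper does not prove this statement at all: it is quoted verbatim as Proposition~5.1 of \cite{CFW}, so there is no internal proof to compare against. Your argument is correct and is essentially the standard one from that reference: the fibrewise maps $f,g$ and the fibrewise homotopies induce the square $\Pi'\circ f_\ast=f^2\circ\Pi$, pulling back a sectional cover of $\Pi$ along $g^2$ gives homotopy sections of $\Pi'$, and the homotopy lifting property of the Hurewicz fibration $\Pi'$ rectifies these to genuine sections over the same open sets; symmetry gives the equality. The only caveat worth recording is that the closing ``conceptual'' remark via Dold's theorem needs the usual hypotheses on the base (e.g.\ a CW or numerably covered base) that the direct HLP argument does not, so the elementary version you wrote out is the one to keep.
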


Finally we state the upper and the lower bounds for parametrized topological complexity. 

\begin{prop}\label{prop1} Let $p: E \rightarrow  B$ be a Hurewicz  fibration with fiber $X$, where the spaces 
$E, B, X$ are CW-complexes. Then the parametrized topological complexity $\TC[p:E\to B]$ satisfies the inequalities
\begin{eqnarray}\label{eq:3a}
\TC(X)\, \le \, \TC[p:E\to B] \, \le \,  \dim E^2_B =\dim B+2\dim X.\end{eqnarray}
\end{prop}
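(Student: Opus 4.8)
The plan is to prove the two inequalities in \eqref{eq:3a} separately, each as an instance of a well-known property of the Schwarz genus.

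\emph{Lower bound.} Assuming $B$ connected (which holds in all cases of interest here), I would fix a point $b\in B$ and set $X_b=p^{-1}(b)$; since $p$ is a Hurewicz fibration, $X_b$ is homotopy equivalent to $X$. The subspace $X_b\times X_b\subseteq E^2_B$ is precisely the fibre over $b$ of the projection $E^2_B\to B$, and the first step is to observe that the restriction of $\Pi$ over $X_b\times X_b$ is nothing but the free path fibration $\pi_{X_b}\colon X_b^I\to X_b\times X_b$: if $\gamma\in E^I_B$ satisfies $\gamma(0)\in X_b$, then $\gamma$ lies in the single fibre $p^{-1}(p(\gamma(0)))=X_b$, so that $\Pi^{-1}(X_b\times X_b)=X_b^I$. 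Thus $\pi_{X_b}$ is the pullback of $\Pi$ along the inclusion $X_b\times X_b\hookrightarrow E^2_B$, and since the sectional category does not increase under pullback of a fibration, $\secat(\pi_{X_b})\le\secat(\Pi)$, i.e.\ $\TC(X_b)\le\TC[p:E\to B]$. Homotopy invariance of $\TC$ gives $\TC(X_b)=\TC(X)$, which completes this half.

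\emph{Upper bound.} Here I would use the chain
$$\TC[p:E\to B]=\secat(\Pi)\ \le\ \mathrm{cat}(E^2_B)\ \le\ \dim E^2_B .$$
The first inequality is Schwarz's classical bound: if an open set $U\subseteq E^2_B$ is null-homotopic in $E^2_B$, then a contracting homotopy of $U$ lifts through the Hurewicz fibration $\Pi$ (started at a constant map into a suitable fibre of $\Pi$, which is nonempty because the fibres of $p$ are path-connected) to a homotopy ending at a section of $\Pi$ over $U$; hence any categorical open cover of $E^2_B$ refines to one admitting local sections of $\Pi$. The second inequality is the standard dimension bound for the Lusternik--Schnirelmann category of a CW complex. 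Finally, $E^2_B=E\times_B E$ is the total space of a fibration over $B$ with fibre $X_b\times X_b\simeq X\times X$, so it has the homotopy type of a CW complex of dimension $\dim B+\dim(X\times X)=\dim B+2\dim X$; together with homotopy invariance of $\mathrm{cat}$ this yields the claimed bound.

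\emph{Main obstacle.} The only delicate point is that, $p$ being merely a Hurewicz fibration, $E^2_B$ need not be a CW complex on the nose, so one must interpret and justify ``$\dim E^2_B=\dim B+2\dim X$'' through a CW model of $E^2_B$ (built, say, from a CW structure on $B$ together with a CW model of $X\times X$, using that the fibration is fibrewise trivial over each cell) and invoke homotopy invariance of both $\secat$ and $\mathrm{cat}$. Every other step is a routine appeal to standard properties of sectional category.
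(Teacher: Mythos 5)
Your proof is correct and takes essentially the approach the paper intends: the paper gives no details, merely citing Definition \ref{def1} and ``well-known properties of the sectional category'' (cf.\ Proposition 7.2 of \cite{CFW}), and your two steps --- pulling $\Pi$ back over a fibre $X_b\times X_b$ to get $\TC(X)\le\secat(\Pi)$, and the chain $\secat(\Pi)\le\mathrm{cat}(E^2_B)\le\dim E^2_B$ for the upper bound --- are exactly those standard properties. Your closing caveat about interpreting $\dim E^2_B$ via a CW model is a fair point that the paper glosses over.
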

Inequalities (\ref{eq:3a}) follow directly from Definition \ref{def1} and the well-known properties of the sectional category. Compare Proposition 7.2 from \cite{CFW}. 

\begin{prop}[Proposition 7.3 from \cite{CFW}]\label{prop2}  Let $p: E \rightarrow  B$ be a fibration with path-connected fiber. Consider the diagonal map $\Delta : E \rightarrow  E^2_B$, where $\Delta (e) = (e,e)$ for $e\in E$. Then the parametrized topological complexity $\TC[p: E \rightarrow  B]$ is greater than or equal to the 
cup-length of the kernel $$\ker[\Delta^\ast  : H^\ast (E^2_B; R) \rightarrow  H^\ast (E; R)],$$ where $R$ is an arbitrary coefficient ring. 
\end{prop}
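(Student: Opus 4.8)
The plan is to deduce the statement from the classical Schwarz estimate for the sectional category of a fibration, combined with a simple identification of kernels. Recall that for any fibration $q\colon Y\to X$ one has $\secat(q)\ge \mathrm{cl}\big(\ker[q^\ast\colon H^\ast(X;R)\to H^\ast(Y;R)]\big)$, where $\mathrm{cl}(I)$ denotes the cup-length of the ideal $I$, i.e. the largest $n$ for which there exist $u_1,\dots,u_n\in I$ with $u_1\cup\cdots\cup u_n\ne 0$. The argument is short: if $X=U_0\cup\cdots\cup U_k$ is an open cover with partial sections $s_i\colon U_i\to Y$ of $q$, then for any $u\in\ker q^\ast$ the restriction of $u$ to $U_i$ vanishes, since $(\mathrm{incl}_{U_i})^\ast u=s_i^\ast q^\ast u=0$; hence $u$ lifts to a class in $H^\ast(X,U_i;R)$. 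Choosing such lifts $\bar u_j\in H^\ast(X,U_{j-1};R)$ of classes $u_1,\dots,u_{k+1}\in\ker q^\ast$, their product lies in $H^\ast\big(X,U_0\cup\cdots\cup U_k;R\big)=H^\ast(X,X;R)=0$ and maps to $u_1\cup\cdots\cup u_{k+1}$, which is therefore zero. So no $(k+1)$-fold product of kernel classes survives, giving $\secat(q)\ge k+1$ as soon as some such product is nonzero.

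Applying this to $\Pi\colon E_B^I\to E^2_B$, it remains to identify $\ker[\Pi^\ast\colon H^\ast(E^2_B;R)\to H^\ast(E_B^I;R)]$ with $\ker[\Delta^\ast\colon H^\ast(E^2_B;R)\to H^\ast(E;R)]$. For this, consider the map $c\colon E\to E_B^I$ sending $e\in E$ to the constant path at $e$; this is well defined because a constant path lies in a single fibre of $p$, and clearly $\Pi\circ c=\Delta$. Hence $\Delta^\ast=c^\ast\circ\Pi^\ast$, so $\ker\Pi^\ast\subseteq\ker\Delta^\ast$. Conversely, $c$ is a homotopy equivalence: the evaluation $\mathrm{ev}_0\colon E_B^I\to E$, $\gamma\mapsto\gamma(0)$, is a homotopy inverse, with $\mathrm{ev}_0\circ c=\mathrm{id}_E$ and a homotopy from $c\circ\mathrm{ev}_0$ to $\mathrm{id}_{E_B^I}$ obtained by reparametrising each path toward its initial point; this deformation stays within fibres of $p$, hence lands in $E_B^I$. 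Therefore $c^\ast$ is an isomorphism and $\ker\Delta^\ast=\ker\Pi^\ast$.

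Combining the two steps, $\TC[p\colon E\to B]=\secat(\Pi)\ge \mathrm{cl}(\ker\Pi^\ast)=\mathrm{cl}(\ker\Delta^\ast)$, which is exactly the asserted inequality. The only point requiring a little care is to check that both the section $c$ and the reparametrisation homotopy genuinely respect the fibrewise constraint defining $E_B^I$ and $E^2_B$ (so that one really works inside these spaces and not in $E^I$, $E^2$); once this is in place the result is precisely the Schwarz estimate specialised to $\Pi$, and I do not expect any further obstacle.
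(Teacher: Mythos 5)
Your argument is correct and is exactly the standard proof of this result: the paper itself imports Proposition \ref{prop2} from \cite{CFW} without proof, and the argument there is the same combination of the Schwarz cup-length lower bound for $\secat$ with the observation that the constant-path inclusion $c\colon E\to E^I_B$ is a fibrewise homotopy equivalence satisfying $\Pi\circ c=\Delta$, so that $\ker\Pi^\ast=\ker\Delta^\ast$. Both of the points you flag as needing care (that $c$ and the reparametrisation homotopy stay inside $E^I_B$, and that the relative cup product argument applies to the open cover) go through without difficulty.
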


\section{Parametrized topological complexity of circle bundles}\label{sec:3}

In this section we consider locally trivial bundles with fibre $S^1$ (the circle) and describe their parametrized topological complexity. 
The results of this section serve two major purposes: they give a demonstrative illustration of the general method, and they are used later in this paper as the case of projective bundles of dimension 1. 

Let $$p: E\to B$$ be a locally trivial bundle with fibre the circle $S^1$, where the base $B$ is a finite CW-complex, {\it a circle bundle} for short.
We shall describe a cohomology class 
$$w(p) \in H^1(B;\Z_2)$$ associated with the bundle $p: E\to B$, which we call {\it the orientation class}. 
Each fibre $X_b=p^{-1}(b)$, where $b\in B$, is topologically a circle
and the homology group $H_1(X_b;\R)$ with real coefficients  is isomorphic to $\R$. The family of groups $${\mathcal H} =\{H_1(X_b;\R)\}_{b\in B}$$ 
forms a real line bundle over $B$, and the class $w(p)$ is defined as 
$
w(p) = w_1(\mathcal H),
$
where $w_1(\mathcal H)$ is the first Stiefel - Whitney class of $\mathcal H$. 
Let $\{U_i\}_{i\in I}$ be an open cover of $B$ such that the bundle $p: E\to B$ can be trivialised over each $U_i\subset B$ with the transition functions 
$$g_{ij}: U_i\cap U_j\to {\rm Homeo}(S^1).$$
Composing $g_{ij}$ with the obvious homomorphism ${\rm Homeo}(S^1)\to {\rm Aut}(H_1(S^1;\Z))=\{1, -1\}$ we obtain a $\{1, -1\}$-cocycle representing the orientation class $w(p)$. 

It is quite obvious that if the circle bundle $p: E\to B$ arises as the unit sphere bundle of a rank 2 vector bundle $\xi$ over $B$ then the orientation class 
$w(p)\in H^1(B;\Z_2)$ coincides with the first Stiefel - Whitney class $w_1(\xi)\in H^1(B;\Z_2)$. 

\begin{lemma} \label{lm:6a} 
The following properties of a locally trivial circle bundle $p: E\to B$ over a finite CW-complex $B$ are equivalent:
\begin{itemize}
\item[{(a)}] The class $w(p)=0\in H^1(B;\Z_2)$ vanishes;
\item[{(b)}]  Each fibre $X_b=p^{-1}(b)$, can be oriented with orientation continuous in $b\in B$; 
\item[{(c)}] The bundle $p: E\to B$ is isomorphic to an $SO(2)$-principal bundle;
\item[{(d)}]  The bundle $p: E\to B$ is isomorphic to the unit circle bundle of a complex line bundle over $B$. 
\end{itemize}
\end{lemma}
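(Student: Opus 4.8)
The plan is to establish the chain of equivalences $(a)\Leftrightarrow(b)\Leftrightarrow(c)\Leftrightarrow(d)$, the only non-formal input being the classical fact that the inclusion $SO(2)\hookrightarrow {\rm Homeo}^+(S^1)$ of the rotation group into the group of orientation preserving self-homeomorphisms of the circle is a homotopy equivalence (equivalently, ${\rm Homeo}(S^1)$ deformation retracts onto $O(2)$). Everything else is a manipulation of transition cocycles, double covers and associated bundles.

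For $(a)\Leftrightarrow(b)$ I would argue with the orientation double cover of $\mathcal H$. The real line bundle $\mathcal H=\{H_1(X_b;\R)\}_{b\in B}$ has a canonical double cover $S(\mathcal H)\to B$ whose fibre over $b$ is the two-element set of orientations of $H_1(X_b;\R)$, i.e. of the circle $X_b$. A continuous orientation of the fibres of $p$ is exactly a continuous section of $S(\mathcal H)\to B$; such a section exists if and only if this double cover is trivial, which happens if and only if $w_1(\mathcal H)=w(p)$ vanishes. Equivalently, in cocycle language, (b) holds iff the local trivialisations $\phi_i\colon p^{-1}(U_i)\to U_i\times S^1$ can be chosen so as to carry the chosen orientation of each fibre to the standard one, which makes the induced $\{1,-1\}$-cocycle $U_i\cap U_j\to{\rm Aut}(H_1(S^1;\Z))$ identically $1$, hence $w(p)=0$.

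The substantive implication is $(b)\Rightarrow(c)$. Refining the trivialising cover so that each $U_i$ is connected, and replacing $\phi_i$ by its composite with an orientation reversing homeomorphism of $S^1$ on those $U_i$ where the chosen orientation disagrees with the standard one, I may assume every $\phi_i$ is orientation compatible. Then all transition functions $g_{ij}\colon U_i\cap U_j\to{\rm Homeo}(S^1)$ take values in ${\rm Homeo}^+(S^1)$, so the structure group of $p$ reduces to ${\rm Homeo}^+(S^1)$. Now the homotopy equivalence $SO(2)\hookrightarrow{\rm Homeo}^+(S^1)$ induces a homotopy equivalence $BSO(2)\to B{\rm Homeo}^+(S^1)$ of classifying spaces; composing the classifying map $B\to B{\rm Homeo}^+(S^1)$ of $p$ with a homotopy inverse yields a lift to $BSO(2)$ (here one uses that $B$ is a CW-complex), hence a further reduction of the structure group to $SO(2)$. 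Since $SO(2)$ acts simply transitively on $S^1$, an $S^1$-bundle with structure group $SO(2)$ is canonically a principal $SO(2)$-bundle, which is $(c)$. Conversely $(c)\Rightarrow(b)$ is immediate: $SO(2)\subset{\rm Homeo}^+(S^1)$ preserves the standard orientation of $S^1$, so a principal $SO(2)$-structure equips each fibre with a continuously varying orientation.

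Finally $(c)\Leftrightarrow(d)$ is the standard dictionary between principal $SO(2)=U(1)$-bundles and complex line bundles: from a principal $SO(2)$-bundle $P\to B$ one forms the complex line bundle $L=P\times_{SO(2)}\C$ (with $U(1)$ acting on $\C$ by scalar multiplication), whose unit circle bundle $S(L)=P\times_{SO(2)}S^1$ is isomorphic to $P$; conversely the unit circle bundle of a complex line bundle is a principal $U(1)=SO(2)$-bundle, and the two constructions are mutually inverse. I expect the only genuine obstacle to be the reduction step $(b)\Rightarrow(c)$, where the content is concentrated in the homotopy type of ${\rm Homeo}^+(S^1)$; granting that, the rest is routine bookkeeping with structure groups.
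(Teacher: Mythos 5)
Your proof is correct and follows essentially the same route as the paper: the equivalence (a)$\Leftrightarrow$(b) via triviality of the line bundle $\mathcal H$ (equivalently its orientation double cover), the key step (b)$\Rightarrow$(c) via the homotopy equivalence $SO(2)\hookrightarrow{\rm Homeo}^+(S^1)$ from Ghys together with the observation that an $S^1$-bundle with structure group $SO(2)$ is itself the principal bundle, and (c)$\Leftrightarrow$(d) via the identification of $SO(2)$ with $U(1)$. The only cosmetic differences are that you spell out the reduction to ${\rm Homeo}^+(S^1)$ and the lift through classifying spaces, which the paper compresses into a citation of Ghys \S 4, and you use the associated-bundle dictionary rather than $BSO(2)={\mathbb {CP}}^\infty$ for (c)$\Leftrightarrow$(d).
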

\begin{proof} If $w(p)=0$ then the real line bundle $\mathcal H$ is trivial and hence it admits a non-vanishing section. The section is a continuous function associating with each $b\in B$ a nonzero element in $H_1(X_b;\R)$, i.e. an orientation of the circle $X_b$. This shows that (a)$\implies$(b).

To show that (b)$\implies$(c) suppose that the fibres $X_b$ can be continuously oriented. 
The results of \S 4 from \cite{Ghys} imply that every orientable locally trivial circle bundle over a finite CW-complex admits $SO(2)$ structure group, 
i.e. it arises from a $SO(2)$-principal bundle. This means that for an $SO(2)$-principal bundle $P\to B$ one can identify $E$ with the space 
$P\times_{SO(2)}S^1$.
But clearly $$P\times_{SO(2)}S^1=P$$ and hence the circle bundle $p:E\to B$ coincides with the principal bundle $P\to B$. 

It is well-known that the classifying space $BSO(2)$ for principal $SO(2)$-bundles coincides with ${\mathbb {CP}}^\infty$, the universal space for complex line bundles. This shows equivalence between (c) and (d). 

The implications (d)$\implies$(a) and (d)$\implies$(b) are well-known: complex vector bundles are always orientable. 
\end{proof}

A locally trivial circle bundle satisfying the conditions of Lemma \ref{lm:6a} will be called {\it orientable}. 

The notion of genus of a cohomology class is well-known and appears in literature under several different names.
\begin{definition}\label{def:7}
The genus ${\mathfrak {genus}}(\alpha)$ of a cohomology class $\alpha\in H^1(B;\Z_2)$ is defined as the smallest integer $k\ge 0$ such that $B$ admits an open cover 
$B=U_0\cup U_1\cup \dots\cup U_k$ with the property that the restriction of the class $\alpha$ on each of the open sets $U_i$ vanishes, 
$\alpha|_{U_i}=0$, where $i=0, 1, \dots, k$. 
\end{definition}

It is well-known that the genus ${\mathfrak {genus}}(\alpha)$ coincides with the minimal integer $n$ such that the class $\alpha$ can be induced by a continuous map $B\to \RP^n$ from the generator 
$\beta\in H^1(\RP^n;\Z_2)$. 

Every nonzero cohomology class $\alpha\in H^1(B;\Z_2)$ determines a 2-fold regular covering space $p_\alpha: \tilde B\to B$ 
and, clearly, ${\mathfrak {genus}}(\alpha)$ coincides with the sectional category $\secat[p_\alpha: \tilde B\to B]$ of this covering. 

The upper and lower bounds for the genus
\begin{eqnarray}\label{eq:4a}
\h(\alpha) \le {\mathfrak {genus}}(\alpha)\le \dim B
\end{eqnarray} 
are well-known. Here $ \h(\alpha)$ denotes {\it the height of the class} $\alpha$, which is defined as the largest integer $m\ge 0$, such that $\alpha^m\not=0$. 

The following result uses the genus of the orientation class of a circle bundle to give a sharp upper bound for the parametrized topological complexity.

\begin{theorem}\label{thm:16} Let $p: E\to B$ be a locally trivial bundle with fibre the circle $S^1$ and 
let $w(p)\in H^1(B;\Z_2)$ be its orientation class. 
Then one has 
\begin{eqnarray}\label{eq:19}
1 \le \TC[p:E\to B]\le {\mathfrak {genus}}(w(p)) +1.
\end{eqnarray}
In particular, $\TC[p:E\to B]=1$ if the circle bundle $p:E\to B$ is orientable. 
\end{theorem}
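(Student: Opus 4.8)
The lower bound is immediate: applying Proposition \ref{prop1} to the fibre $X = S^1$ gives $\TC[p:E\to B] \ge \TC(S^1) = 1$. For the upper bound the plan is to build an explicit open cover of $E^2_B$ by $k+2$ sets carrying sections of $\Pi$, where $k = {\mathfrak {genus}}(w(p))$. First I would fix an open cover $B = V_0 \cup V_1 \cup \dots \cup V_k$ with $w(p)|_{V_i} = 0$; since $w(p) = w_1(\mathcal H)$, the line bundle $\mathcal H|_{V_i}$ has vanishing first Stiefel--Whitney class and is therefore trivial, and a nowhere-zero section of it is exactly a continuous orientation $o_i$ of the circles $X_b$, $b \in V_i$ (this is the implication (a)$\Rightarrow$(b) of Lemma \ref{lm:6a}, which uses no compactness).

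Next I would equip $E$ with a fibrewise Riemannian metric coming from a partition of unity, so each fibre $X_b$ becomes a circle of length $\ell(b) > 0$; for $x_0, x_1 \in X_b$ let $r \in [0, \ell(b)/2]$ be their distance, the value $\ell(b)/2$ occurring exactly when $x_1$ is the antipode of $x_0$. Writing $\hat p : E^2_B \to B$ for the projection and $\Delta E$ for the fibrewise diagonal, set
$$W = \{(x_0,x_1) \in E^2_B : x_1 \text{ is not the antipode of } x_0\}, \qquad U_i = \hat p^{-1}(V_i) \cap \bigl(E^2_B \setminus \Delta E\bigr).$$
The set $W$ is open (its complement is the graph of the fibrewise antipodal map), it contains $\Delta E$, and it carries the section of $\Pi$ assigning to $(x_0,x_1)$ the unique minimising geodesic in $X_b$ from $x_0$ to $x_1$, the constant path when $x_0=x_1$; continuity across $\Delta E$ holds because a shrinking minor arc converges to the constant path. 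Each $U_i$ is open, and over it the section of $\Pi$ sends $(x_0,x_1)$ to the path rotating $x_0$ to $x_1$ in the positive direction fixed by $o_i$; this is continuous on $E^2_B \setminus \Delta E$ since the only discontinuity of ``positive rotation'' (the jump between rotation amounts $0$ and $\ell(b)$) occurs on $\Delta E$, which is excluded.

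It then remains to check the covering: $\bigcup_{i=0}^k U_i = \hat p^{-1}(B) \cap (E^2_B \setminus \Delta E) = E^2_B \setminus \Delta E$, so $W \cup \bigcup_i U_i$ omits only pairs that are simultaneously diagonal and antipodal, and there are none. Hence $E^2_B$ is covered by $k+2$ open sets admitting sections of $\Pi$, so $\TC[p:E\to B] = \secat(\Pi) \le k+1 = {\mathfrak {genus}}(w(p)) + 1$. If $p$ is orientable then $w(p)=0$, hence ${\mathfrak {genus}}(w(p)) = 0$ and the two inequalities force $\TC[p:E\to B] = 1$.

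The main obstacle I anticipate is not conceptual but one of rigour in the two section formulas: one must check that the fibrewise antipodal map is continuous and orientation-independent (it is, being multiplication by the canonical element $-1 \in SO(2)$, which is fixed under orientation reversal), that the ``minor arc'' and ``positive rotation'' assignments are continuous \emph{uniformly as the fibre $X_b$ varies} over $E^2_B$ (this is where the Riemannian description and the genuine continuity of $o_i$ are used), and that all the chosen paths remain inside a single fibre so that they really are sections of $\Pi$. A minor technical point is to justify that $\mathcal H|_{V_i}$ is trivial over the open, possibly non-compact set $V_i$; this is fine since open subsets of a finite CW complex are paracompact and have the homotopy type of CW complexes, so $w_1$ classifies real line bundles over them.
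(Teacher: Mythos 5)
Your proof is correct and follows the same basic decomposition as the paper's: cover $B$ by sets $V_i$ on which $w(p)$ vanishes, orient the fibres there, and section $\Pi$ off the diagonal by positive rotation, treating the diagonal separately. The one genuine difference is how the diagonal is handled. The paper uses the literal cover $E^2_B=\Delta(E)\cup A_0\cup\dots\cup A_k$, whose first member is closed rather than open, and legitimises this by appealing to Proposition \ref{prop3} (arbitrary covers suffice for locally trivial bundles over separable metrizable ANRs). You instead thicken $\Delta(E)$ to the open set $W$ of non-antipodal pairs and section it by minimising geodesics, which keeps the whole argument at the level of honest open covers and makes Proposition \ref{prop3} unnecessary. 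The price you pay is the ``fibrewise Riemannian metric coming from a partition of unity'': a priori the fibres are only topological circles with $\mathrm{Homeo}(S^1)$ transition functions, so there is no smooth structure to support a Riemannian metric, and a convex combination of local metrics does not obviously control geodesics. Either you should phrase this as a fibrewise \emph{length structure} (patch the pushforwards of arc-length measure under local trivialisations by a partition of unity, and verify continuity of antipodes and constant-speed arcs in $b$ by hand), or, more cleanly, invoke Lemma \ref{lm:13} from Appendix \ref{sec:app1}: every circle bundle over a finite CW complex is the unit sphere bundle of a rank $2$ vector bundle, which equips each fibre with a genuine round metric for which antipodes, minimising arcs and constant-speed positive rotations are literal and all your continuity checks become immediate. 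With that point repaired, your argument is complete; the remaining steps (the count $k+2$ of sets, the covering check, the lower bound via $\TC(S^1)=1$, and the orientable case) all match the paper.
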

An orientable circle bundle is a principal $S^1$-bundle (by Lemma \ref{lm:6a}) and our statement $\TC[p:E\to B]=1$ is consistent with Proposition 4.3 from \cite{CFW}. 

Combining the upper bound (\ref{eq:19}) with the inequality (\ref{eq:4a}) we obtain:
\begin{corollary}\label{cor:9}
For any circle bundle $p:E\to B$ over a finite CW-complex one has 
\begin{eqnarray}\label{eq:19a}
\TC[p:E\to B] \le \dim B +1. 
\end{eqnarray}
\end{corollary}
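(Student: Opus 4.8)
The plan is to deduce Corollary~\ref{cor:9} directly from the upper bound of Theorem~\ref{thm:16} by bounding the genus of the orientation class from above. First I would recall that for the circle bundle $p:E\to B$ we have
\begin{eqnarray*}
\TC[p:E\to B]\le {\mathfrak{genus}}(w(p))+1,
\end{eqnarray*}
which is precisely the right-hand inequality in (\ref{eq:19}). So the only remaining task is to show that ${\mathfrak{genus}}(w(p))\le \dim B$.

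This, however, is exactly the content of the right-hand inequality in (\ref{eq:4a}), applied to the class $\alpha=w(p)\in H^1(B;\Z_2)$: for any finite CW-complex $B$ and any one-dimensional mod~$2$ cohomology class $\alpha$ one has ${\mathfrak{genus}}(\alpha)\le\dim B$. Hence it suffices to chain these two inequalities together. The proof therefore reads: by Theorem~\ref{thm:16}, $\TC[p:E\to B]\le {\mathfrak{genus}}(w(p))+1$; by (\ref{eq:4a}), ${\mathfrak{genus}}(w(p))\le\dim B$; combining the two gives $\TC[p:E\to B]\le\dim B+1$, as claimed.

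Since the excerpt already states (\ref{eq:4a}) as well-known, there is really no obstacle here; the corollary is a formal consequence. If one wanted a self-contained argument for the bound ${\mathfrak{genus}}(\alpha)\le\dim B$, the key step — and the only mildly nontrivial point — is the observation recorded just before Definition~\ref{def:7}: the genus of $\alpha$ equals the sectional category of the associated double cover $p_\alpha:\tilde B\to B$, and the sectional category of any fibration over an $n$-dimensional CW-complex is at most $n$ (this is the standard general bound $\secat\le\dim$ for fibrations, obtained by pulling back the section over the skeleta). One could instead invoke the characterization of ${\mathfrak{genus}}(\alpha)$ as the least $n$ admitting a classifying map $B\to\RP^n$ inducing $\alpha$, and note that a map into $\RP^\infty$ can always be compressed into $\RP^{\dim B}$ by cellular approximation. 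Either route is routine, so I would keep the proof of Corollary~\ref{cor:9} to the two-line deduction above.
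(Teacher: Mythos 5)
Your proposal is correct and is exactly the paper's argument: the corollary is stated as an immediate consequence of combining the upper bound (\ref{eq:19}) of Theorem \ref{thm:16} with the well-known inequality ${\mathfrak {genus}}(\alpha)\le\dim B$ from (\ref{eq:4a}). Nothing further is needed.
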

Note that the upper bound (\ref{eq:19a}) is by one stronger than the general upper bound of Proposition \ref{prop1}. 
We shall see below Example \ref{ex:12},  Example \ref{ex:13} and Example \ref{ex:14}, when the upper bound  (\ref{eq:19a}) is sharp. 

It would be of interest to compare the results about the sharp upper bounds for the parametrized topological complexity (Corollary \ref{cor:9} and  
Theorem \ref{thm:sharp})
with the results of \cite{Grant}. 

\begin{proof}[Proof of Theorem \ref{thm:16}] The inequality $\TC[p:E\to B]\ge 1$ follows from (\ref{eq:3a}) since $\TC(S^1)=1$. 
To prove the upper bound, we suppose that $B=U_0\cup U_1\cup \dots\cup U_k$ is an open cover with $w(p)|_{U_i}=0$ for $i=0, 1, \dots, k$, where $k={\mathfrak {genus}}(w(p))$. 
The restricted bundle $p: p^{-1}(U_i)\to U_i$ is orientable and we shall denote by $A_i\subset E^2_B$ the set  
$$A_i= \{(e, e')\in E^2_B; \, e\not= e', \, p(e)=p(e')\in U_i\}, \quad i=0, 1, \dots, k.$$ 
Besides, let $\Delta(E)\subset E^2_B$ be the diagonal. We obtain a cover
$$
E^2_B = \Delta(E)\cup A_0\cup A_1\cup \dots\cup A_k
$$
and (having in mind Proposition \ref{prop3}) we show below that over each of the sets of this cover one can construct a continuous section of the bundle $\Pi: E^I_B\to E^2_B$. 

A section $s: \Delta(E)\to E^I_B$ over the diagonal is obvious: we can set $s(e, e)\in E^I_B$ to be the constant path at $e\in E$. 

A section $s_i: A_i\to E^I_B$ for $i=0, 1, \dots, k$ can be defined as follows. By Lemma \ref{lm:6a} the bundle $p: p^{-1}(U_i)\to U_i$ is orientable and for $(e, e')\in A_i$ we define 
$s_i(e, e')(t)$ to be the path moving along the fibre in the positive direction starting from $e$ and ending at $e'$. This proves the inequality (\ref{eq:19}). 
\end{proof}

We can further strengthen the upper bound of Theorem \ref{thm:16} under an additional assumption that the base $B$ is a closed manifold:

\begin{theorem}\label{thm:10}
Let $p: E\to B$ be a locally trivial bundle with fibre $S^1$, where the base $B$ is a closed manifold of dimension $n$. If the $n$-th power of the orientation class $w(p)$ vanishes, i.e. 
$$w(p)^n=0\in H^1(B;\Z_2),$$ 
then 
\begin{eqnarray}
\TC[p:E\to B]\le \, n\, = \, \dim B.
\end{eqnarray}
\end{theorem}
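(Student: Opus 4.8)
The plan is to refine the cover-and-section argument from the proof of Theorem \ref{thm:16}, exploiting the extra hypothesis that $B$ is a closed $n$-manifold with $w(p)^n=0$. Recall that in the proof of Theorem \ref{thm:16} we produced a cover of $E^2_B$ by the diagonal $\Delta(E)$ together with sets $A_0,\dots,A_k$, where $k={\mathfrak{genus}}(w(p))$, and over each piece we built a section of $\Pi$. To obtain the bound $\TC[p:E\to B]\le n$, I need to reduce the number of pieces by one: I will merge the diagonal with one of the $A_i$'s. Concretely, the idea is that a closed $n$-manifold $B$ admits a CW (or handle) decomposition $B=B^{(n-1)}\cup(\text{top cells})$, where $B^{(n-1)}$ is the $(n-1)$-skeleton and the top-dimensional cells are disjoint open $n$-disks. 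Over each open $n$-disk the bundle $p$ is trivial, hence orientable, and more importantly an open $n$-disk is contractible so $w(p)$ restricts to zero there; similarly, since $\dim B^{(n-1)}=n-1$ and using ${\mathfrak{genus}}(w(p))\le \dim(\text{restriction domain})$, the restriction $w(p)|_{B^{(n-1)}}$ has genus at most $n-1$.

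First I would fix an open cover $B^{(n-1)}\subset V_0\cup V_1\cup\dots\cup V_{n-1}$ with $w(p)|_{V_j}=0$ for $j=0,\dots,n-1$; this is possible because ${\mathfrak{genus}}\bigl(w(p)|_{B^{(n-1)}}\bigr)\le n-1$ by \eqref{eq:4a}, and one can thicken to open subsets of $B$ (using an ANR/neighbourhood retract argument, which is available since $B$ is a manifold). Let $D\subset B$ be the union of the open top cells, so that $B=D\cup V_0\cup\dots\cup V_{n-1}$ and $w(p)|_D=0$ as well. Now set, as before,
$$
A_j=\{(e,e')\in E^2_B:\ e\ne e',\ p(e)=p(e')\in V_j\},\qquad j=0,\dots,n-1,
$$
and define a single set $W$ that plays the role of "the diagonal together with the part of $E^2_B$ lying over $D$":
$$
W=\Delta(E)\ \cup\ \{(e,e')\in E^2_B:\ e\ne e',\ p(e)=p(e')\in D\}.
$$
Then $E^2_B=W\cup A_0\cup\dots\cup A_{n-1}$, a cover by $n+1$ sets, so it suffices to construct a continuous section of $\Pi:E^I_B\to E^2_B$ over each.

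Over each $A_j$ the construction is exactly as in Theorem \ref{thm:16}: $p$ restricted over $V_j$ is orientable by Lemma \ref{lm:6a}, so we move along the fibre in the positive direction from $e$ to $e'$. The crux is the section over $W$. Here I would use that the part of $E^2_B$ over $D$ is, up to fibre homotopy, a disjoint union of pieces of the form $D_\alpha\times(S^1\times S^1)$, and that $W$ restricted over each $D_\alpha$ is $D_\alpha$ times $(\Delta(S^1)\cup\{(x,y):x\ne y\})=D_\alpha\times (S^1\times S^1)$ — i.e.\ all of it. So on $W$ it is enough to exhibit a global section of $\Pi$ over the \emph{full} path-space fibration of a \emph{trivial} circle bundle over a disk; but $\TC(S^1)=1$ is realized by a cover of $S^1\times S^1$ into two pieces, not one, so one cannot naively take all of $D_\alpha\times S^1\times S^1$ in one piece. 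The resolution — and the main obstacle — is to be more careful: instead of absorbing all of the over-$D$ part into $W$, I should use the contractibility of each top cell together with the orientability there to split that part between $W$ and the already-present $A_j$'s, or alternatively to observe that over the \emph{closed} top cell (a disk) one genuinely needs two sets, but the boundary of the disk is absorbed into $V_j$'s, leaving the open disk, over which the diagonal's complement still fibres with fibre the non-contractible $S^1\times S^1\setminus\Delta$; crucially this complement \emph{deformation retracts} onto a circle, and a trivialised orientable circle bundle over a disk admits a section of $\Pi$ over $\Delta(E)\cup(\text{that complement})$ because we can route every pair through a chosen direction except we must fix the antipodal-type obstruction by using the product structure $D\times S^1$ and a single continuous rule (move positively, with the length of the path depending continuously on $(e,e')$ even as $e'\to e$ from the "wrong" side) — in other words, over a \emph{trivial orientable} circle bundle the entire $E^2_B$ over the base disk admits a single section, since on $D\times S^1$ we may write the section explicitly as $(d,x,d,y)\mapsto$ the positively-oriented path from $x$ to $y$, which is continuous on all of $D\times S^1\times S^1$ including the diagonal (the path shrinks to a constant as $y\to x^+$ and wraps fully as $y\to x^-$, but both limits agree with the constant path only after reparametrization — here one must check continuity in the compact-open topology carefully).

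I expect this continuity check over the open top cell to be the delicate point: it is the statement that a principal $S^1$-bundle, equivalently any orientable circle bundle over a base over which it is trivial, has parametrized topological complexity $0$ over that base, i.e.\ $\Pi$ admits a \emph{global} section — and this is exactly Proposition 4.3 from \cite{CFW} (cited right after Theorem \ref{thm:16}), which gives $\TC=1$ but whose proof in fact produces a section over $\Delta(E)\cup\{e\ne e'\}$ when the bundle is a principal $S^1$-bundle. Thus over $W$, which lies over $D$ where $p$ is a trivial (hence principal) circle bundle, the section exists. Assembling: $E^2_B=W\cup A_0\cup\dots\cup A_{n-1}$ is a cover by $n+1$ open sets each admitting a section of $\Pi$, whence $\TC[p:E\to B]\le n=\dim B$, as claimed. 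The routine parts I would not belabor are the thickening of skeleta to open sets (standard for manifolds/ANRs) and the explicit formula for the positively-oriented path; the one point deserving real care is verifying that this formula defines a continuous section over the diagonal, i.e.\ identifying $W$ with the locus over which a trivial orientable circle bundle admits a global $\Pi$-section.
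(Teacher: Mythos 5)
Your argument has a fatal gap, and there is a quick sanity check that exposes it: nowhere do you use the hypothesis $w(p)^n=0$. If your construction worked, it would show $\TC[p:E\to B]\le n$ for \emph{every} circle bundle over a closed $n$-manifold, contradicting Example \ref{ex:12}, where $B=\RP^n$ and $\TC[p:E\to B]=n+1$. The concrete failure is the section over $W=\Delta(E)\cup\{(e,e'):e\ne e',\ p(e)=p(e')\in D\}$. Over a point $d\in D$ the set $W$ contains the entire fibre $S^1\times S^1$ of $E^2_B\to B$, so a continuous section of $\Pi$ over $W$ would restrict to a continuous global section of $\pi:(S^1)^I\to S^1\times S^1$, i.e.\ it would give $\TC(S^1)=0$; but $\TC(S^1)=1$ because $S^1$ is not contractible. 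Your explicit formula (``move positively from $x$ to $y$'') is genuinely discontinuous at the diagonal: as $y\to x^+$ the paths converge in the compact--open topology to the constant path at $x$, while as $y\to x^-$ they converge to the full positively oriented loop at $x$, and these are distinct points of $E^I_B$ (no reparametrization identifies them --- one is null-homotopic rel endpoints and the other generates $\pi_1(S^1)$). Likewise, Proposition 4.3 of \cite{CFW} asserts $\TC=\TC(G)=1$ for a principal $S^1$-bundle, not $0$; its proof produces two sections, one over the diagonal and one over its complement, and they cannot be merged.

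The paper's proof is entirely different and much shorter: it invokes Theorem \ref{maxsecatmanifold} (the Berstein-type result proved in Appendix \ref{app:2}), which says that for a nonzero class $\alpha\in H^1(Y;\Z_2)$ on a closed $n$-manifold one has ${\mathfrak {genus}}(\alpha)=n$ if and only if $\alpha^n\ne0$. Thus the hypothesis $w(p)^n=0$ forces ${\mathfrak {genus}}(w(p))\le n-1$, and the bound $\TC[p:E\to B]\le{\mathfrak {genus}}(w(p))+1\le n$ follows from Theorem \ref{thm:16}. In other words, the saving of one set happens on the base (the genus of the orientation class drops below $\dim B$), not by merging the diagonal with one of the sets $A_i$ in $E^2_B$; the latter is obstructed exactly by $\TC(S^1)=1$.
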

\begin{proof}
By Theorem \ref{maxsecatmanifold} in Appendix II, see \S \ref{app:2},
we know that $w(p)^n=0$,  where $n=\dim B$, implies that ${\mathfrak {genus}}(p)< \dim B$. 
Our result now follows from the right inequality (\ref{eq:19}). 
\end{proof}

Finally we mention a result from \cite{FW},  which gives a lower bound for the parametrised topological complexity of circle bundles and complements Theorem \ref{thm:16}.

\begin{theorem}[Theorem 2 in \cite{FW}; see also Theorem 9.1 from \cite{FP}] 
\label{thm:11}
Let $\xi$ be a rank 2 vector bundle over a finite CW-complex $B$ and let $p:E\to B$ be the unit sphere bundle of 
$\xi$. Then one has
$$
\TC[p:E\to B] \ge \h(w_1(\xi)|w_2(\xi))+1,
$$
where $w_1(\xi)=w(p)$ and $w_2(\xi)$ are the Siefel-Whitney classes of $\xi$, with $w_i(\xi)\in H^i(B;\Z_2)$. The relative height $\h(w_1(\xi)|w_2(\xi))$ is defined as the maximal integer $k\ge 0$ such that the cohomology class $w_1(\xi)^k$ does not belong to the ideal of the algebra $H^\ast(B;\Z_2)$ generated by the class $w_2(\xi)$.
\end{theorem}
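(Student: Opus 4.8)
The plan is to derive the bound from the cohomological lower bound of Proposition~\ref{prop2}: it suffices to produce one class $\vv$ lying in $\ker[\Delta^\ast\colon H^\ast(E^2_B;\Z_2)\to H^\ast(E;\Z_2)]$ whose $(k+1)$-st power is non-zero, where $k=\h(w_1(\xi)|w_2(\xi))$.

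The first step is to describe $H^\ast(E^2_B;\Z_2)$. Viewing $E^2_B=S(\xi)\times_B S(\xi)$ as a bundle over $E=S(\xi)$ via the first projection $\pi_1$, one has $E^2_B=S(p^\ast\xi)$, and the tautological unit-vector section of $p^\ast\xi$ over $S(\xi)$ splits $p^\ast\xi\cong\epsilon^1\oplus\zeta$ with $\zeta$ a line bundle and $w_1(\zeta)=p^\ast w_1(\xi)\in H^1(E;\Z_2)$. Thus $\pi_1$ realises $E^2_B$ as the sphere bundle of the rank~$2$ bundle $\epsilon^1\oplus\zeta$, whose mod~$2$ Euler class $w_2(\epsilon^1\oplus\zeta)$ vanishes; hence its Gysin sequence splits and there is a Leray--Hirsch class in $H^1(E^2_B;\Z_2)$ restricting to a fibre generator, so that $H^\ast(E^2_B;\Z_2)$ is a free $H^\ast(E;\Z_2)$-module (via $\pi_1^\ast$) on $\{1,\vv\}$. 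Here I pin down $\vv$ by replacing an arbitrary such generator $\mathfrak w$ by $\vv:=\mathfrak w+\pi_1^\ast(\Delta^\ast\mathfrak w)$; since $\pi_1\circ\Delta=\id_E$ this gives $\Delta^\ast\vv=0$, so $\vv$ is a zero-divisor, and a short restriction computation shows that $\vv$ restricts on every torus fibre $S^1\times S^1$ of $E^2_B\to B$ to the unique non-zero element $a_1+a_2$ of the kernel of $\Delta^\ast$ there.

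The heart of the argument is the ring relation $\vv^2=q^\ast(w_1(\xi))\cdot\vv$ in $H^\ast(E^2_B;\Z_2)$, where $q=p\circ\pi_1\colon E^2_B\to B$; it yields $\vv^{j}=q^\ast(w_1(\xi)^{\,j-1})\cdot\vv$ for all $j\ge 1$. To prove it, write $\vv^2=\pi_1^\ast(\alpha)+\pi_1^\ast(\beta)\vv$ by Leray--Hirsch with $\alpha\in H^2(E;\Z_2)$, $\beta\in H^1(E;\Z_2)$; applying $\Delta^\ast$ and using $\Delta^\ast\vv=0$, $\Delta^\ast\pi_1^\ast=\id$ kills $\alpha$, and restricting the relation to a fibre of $p$ shows that $\beta$ vanishes on fibres, hence $\beta=p^\ast\bar\beta$ for some $\bar\beta\in H^1(B;\Z_2)$. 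Identifying $\bar\beta$ with $w_1(\xi)$ is the one non-formal point: it amounts to the statement that, for the circle bundle $S(\epsilon^1\oplus L)\to X$ of a real line bundle $L$, the normalised Leray--Hirsch generator squares to $w_1(L)$ times itself, which I would check on the universal example $X=\RP^\infty$ with $L$ the tautological line bundle (there $S(\epsilon^1\oplus L)$ is aspherical with fundamental group the infinite dihedral group, $H^\ast=\Z_2[x_1,x_2]/(x_1x_2)$, and the relation is immediate) and then conclude by naturality. Granting the relation, $\vv^{k+1}=q^\ast(w_1(\xi)^k)\cdot\vv=\pi_1^\ast(\omega)\cdot\vv$ with $\omega=p^\ast(w_1(\xi)^k)\in H^k(E;\Z_2)$. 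The Gysin sequence of $p\colon S(\xi)\to B$ identifies $\ker p^\ast$ with the ideal generated by $w_2(\xi)$, so the definition of $k=\h(w_1(\xi)|w_2(\xi))$ gives $\omega\ne 0$; but $\pi_1^\ast(\omega)\cdot\vv$ is exactly the element of the free module $H^\ast(E;\Z_2)\langle 1,\vv\rangle$ with $\vv$-coordinate $\omega$, so $\vv^{k+1}\ne 0$. As $\vv\in\ker\Delta^\ast$, Proposition~\ref{prop2} then yields $\TC[p\colon E\to B]\ge k+1$.

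I expect the main obstacle to be the identification $\bar\beta=w_1(\xi)$ in the ring relation — checking that the base part of $\vv^2$ is precisely $w_1(\xi)$ and not $0$ — which requires the universal-example computation above together with a naturality argument; the remaining ingredients (the Leray--Hirsch splitting and normalisation of $\vv$, the Gysin computation of $\ker p^\ast$, and the manipulations with $\Delta^\ast$) are routine.
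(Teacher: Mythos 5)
The paper does not actually prove Theorem \ref{thm:11}: it is imported verbatim from Theorem 2 of \cite{FW} (see also \cite{FP}), so there is no in-paper proof to compare against. Judged on its own, your argument is correct and complete in outline, and it is essentially the sphere-bundle analogue of the machinery the paper develops in \S\ref{sec:2a} for projectivisations: your class $\vv$ plays the role of the kernel class $\vv_L+\vv_R$ of Corollary \ref{cor:11}, your appeal to Proposition \ref{prop2} matches Corollary \ref{cor:13}, and your key relation $\vv^2=q^\ast(w_1(\xi))\cdot\vv$ is the sphere-bundle counterpart of the relation (\ref{eq:rel}) (for $d=1$ that relation reads $\vv^2=w_2+w_1\vv$; on the sphere bundle the $w_2$-term disappears because $w_2(\xi)$ pulls back to zero, which is exactly why the Gysin identification $\ker p^\ast=(w_2(\xi))$ then converts non-membership of $w_1^k$ in $(w_2)$ into $\vv^{k+1}\neq 0$). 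The one point you rightly flag as non-formal — pinning the coefficient $\beta$ down to $w_1(\xi)$ rather than $0$ — is handled adequately by your universal computation over $\RP^\infty$, where $S(\epsilon^1\oplus L)$ is a $K(D_\infty,1)$ with $H^\ast=\Z_2[x_1,x_2]/(x_1x_2)$, $p^\ast w_1(L)=x_1+x_2$ and $x_1^2=x_1(x_1+x_2)$; just note explicitly that naturality gives the relation for the pulled-back generator $u=f^\ast(x_1)$, and that passing from $u$ to your normalised $\vv=u+\pi_1^\ast c$ changes $\vv^2$ only by the base term $\pi_1^\ast(w_1(\zeta)c+c^2)$, so the uniqueness of the Leray--Hirsch decomposition (together with your observation that the base part of $\vv^2$ vanishes) forces the $\vv$-coefficient to be $q^\ast(w_1(\xi))$. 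With that small elaboration the proof stands.
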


We illustrate the results of this section by several examples. 

\begin{example}\label{ex:12}
Consider the canonical line bundle $\eta$ over the real projective space $\RP^n$. Let $p:E\to B$ be the unit sphere bundle of the vector bundle $\xi=\eta\oplus \epsilon$ where $\epsilon$ is the trivial line bundle. The total Stiefel - Whitney class of $\xi$ is $1+\beta$, where $\beta\in H^1(\RP^n;\Z_2)$ is the generator, i.e. $w_1(\xi)=\beta$ and $w_2(\xi)=0$. Thus, we find that the relative height $\h(w_1(\xi)|w_2(\xi))$ equals $n$ and Theorem \ref{thm:11} gives 
$\TC[p:E\to B] \ge n+1$. On the other hand, Corollary \ref{cor:9} gives the opposite inequality implying $\TC[p:E\to B] = n+1$. 
\end{example}
\begin{example}\label{ex:13}
Consider the 2-dimensional torus $T^2$ and two real line bundles $\ell_1$ and $\ell_2$ over $T^2$ which have the Stiefel - Whitney classes $w_1(\ell_1)=a_1$ and 
$w_1(\ell_2)=a_2$, where $a_1, \, a_2\in H^1(T^2;\Z_2)$ are the standard generators. Let $\xi$ be the Whitney sum $\xi=\ell_1\oplus\ell_2$ and let 
$p:E\to B=T^2$ be the unit sphere bundle of $\xi$. The total Stiefel - Whitney class of $\xi$ is $$w(\xi)=(1+a_1)(1+a_2)= 1+ (a_1+a_2)+a_1a_2.$$
We see that $w_1(\xi)=a_1+a_2$ and $w_2(\xi)=a_1a_2$ and the relative height $\h(w_1(\xi)|w_2(\xi))=1$ equals 1. Applying Theorem \ref{thm:11} we get
$\TC[p:E\to B]\ge 2$. On the other hand, Theorem \ref{thm:10} gives the opposite inequality, and we finally obtain $\TC[p:E\to B]=2$.

\end{example}

\begin{example} \label{ex:14} The Hopf bundle $S^3\to S^2$ is orientable and hence $\TC[S^3\to S^2]=1$ according to Theorem \ref{thm:16}. 
Consider also the bundle $p: \RP^3\to S^2$ with fibre $\RP^1=S^1$ obtained from the Hopf bundle $S^3 \to S^2$ by factorising with respect to the antipodal 
involution on $S^3$. The obtained bundle is again orientable and hence we obtain $\TC[p: \RP^3\to S^2]=1$ as above. 
Example \ref{ex:26} contains further information on this circle bundle. 
\end{example}

\section{Projective bundles and cohomology algebras}\label{sec:2a}

\subsection{}\label{sec:21} Let $\xi$ be a vector bundle of rank $d+1>1$ over a CW-complex $B$. 
We shall denote by $E(\xi)$ the total space of $\xi$, and the bundle map is denoted $\xi: E(\xi)\to B$. 
The projectivisation of $\xi$ yields a locally trivial bundle 
\begin{eqnarray}\label{eq:bundle}
p:E\to B
\end{eqnarray} with fibre 
$\mathbb {RP}^d$, the real projective space of dimension $d$. Our aim is to understand the parametrized topological complexity
\begin{equation}
\TC[p:E\to B]
\end{equation}
(as defined in \S \ref{sec:1}) 
in terms of topological invariants associated with the original vector bundle $\xi$. 
We know that $\TC[p:E\to B]$ 
reflects the complexity of algorithms which take as input pairs of lines lying in the same fibre of $\xi$ and moving  the first line continuously towards the second line while staying in the same fibre, see \cite{CFW}, \cite{CFW2}.
\subsection{} An obvious first step is to compare $\TC[p:E\to B]$ with $\TC(\RP^d)$, the usual topological complexity of the fibre, which serves as a lower 
bound,  
\begin{eqnarray}\label{eq:compfibre}
\TC[p:E\to B]\ge \TC(\RP^d).
\end{eqnarray} 
From \cite{FTY} 
we know that for $d\not= 1, 3, 7$ one has
$$d+1\le \TC(\RP^d)\le 2d-1$$ 
and $\TC(\RP^d)= 2d-1$ when $d$ is a power of 2. One of the main results of \cite{FTY} states:
\begin{theorem}\label{fty}
For $d\not= 1, 3, 7$ the topological complexity $\TC({\mathbb {RP}}^d)$ equals the smallest integer $N$ such that the projective space ${\mathbb {RP}}^d$ admits a smooth immersion ${\mathbb {RP}}^d\to \mathbb R^N$. For $d=1, 3, 7$ one has $\TC({\mathbb {RP}}^d)=d$. 
\end{theorem}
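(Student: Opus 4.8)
The plan is to run the strategy of Farber--Tabachnikov--Yuzvinsky, whose core is to translate $\TC(\RP^d)$ and the immersion dimension of $\RP^d$ into a common language, namely that of nonsingular bilinear maps. Call a bilinear map $\beta\colon\R^{d+1}\times\R^{d+1}\to\R^{m}$ \emph{nonsingular} if $\beta(v,w)=0$ forces $v=0$ or $w=0$, and let $b(d+1)$ be the least $m$ admitting one. The main step I would prove is the identity
\[
\TC(\RP^d)+1=b(d+1),
\]
equivalently: $\TC(\RP^d)\le k$ if and only if there is a nonsingular bilinear map $\R^{d+1}\times\R^{d+1}\to\R^{k+1}$. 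The statement of the theorem then follows by identifying $b(d+1)$.

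For the implication ``$b(d+1)\le k+1\ \Rightarrow\ \TC(\RP^d)\le k$'' I would build a motion planning algorithm out of the bilinear map. Writing $\beta=(\beta_0,\dots,\beta_k)$, each $\beta_i$ determines a section of a line bundle $L$ over $\RP^d\times\RP^d$ with $w_1(L)=a+b$ (here $a,b$ are the generators of $H^1$ of the two factors); nonsingularity says these $k+1$ sections have no common zero, and over the open set $U_i$ where $\beta_i\ne 0$ an explicit steering construction --- using $\beta_i$ to break the $\pm$-ambiguity of unit representatives of the two lines --- produces a continuous section over $U_i$ of the path fibration $(\RP^d)^I\to\RP^d\times\RP^d$. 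For the converse one must, starting from a cover $\RP^d\times\RP^d=U_0\cup\dots\cup U_k$ equipped with continuous motion planners, extract a nonsingular bilinear map into $\R^{k+1}$; here I would use Schwarz's theory of sectional category together with obstruction theory, identifying the obstruction tower for sections of the path fibration of $\RP^d$ with the one governing bundle monomorphisms $(d+1)\gamma\hookrightarrow\epsilon^{k+1}$ over $\RP^d$ (with $\gamma$ the tautological line bundle), and then passing from such a monomorphism to a bilinear map.

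It remains to identify $b(d+1)$. Since $T\RP^d\oplus\epsilon\cong(d+1)\gamma$, a bundle monomorphism $(d+1)\gamma\hookrightarrow\epsilon^{N+1}$ is the same thing as a rank-$(N-d)$ bundle $\nu$ on $\RP^d$ with $T\RP^d\oplus\nu$ trivial, hence, for $N>d$, by Hirsch's immersion theorem, the same thing as a smooth immersion $\RP^d\to\R^{N}$; conversely a nonsingular bilinear map $\R^{d+1}\times\R^{d+1}\to\R^{N+1}$ gives such a monomorphism. Away from the boundary case this identifies $b(d+1)-1$ with the least $N$ for which $\RP^d$ immerses in $\R^N$. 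At $N=d$ the two notions part company: a nonsingular bilinear map $\R^{d+1}\times\R^{d+1}\to\R^{d+1}$ is precisely a (not necessarily associative or unital) real division algebra structure, which by the Bott--Milnor--Kervaire theorem --- ultimately Adams' solution of the Hopf invariant one problem --- exists exactly when $d+1\in\{1,2,4,8\}$, that is $d\in\{1,3,7\}$, whereas $\RP^d$ never immerses in $\R^d$. Hence for $d\in\{1,3,7\}$ one has $b(d+1)=d+1$, so $\TC(\RP^d)=d$ (consistent with $\RP^1,\RP^3,\RP^7$ being H-spaces via $\C$, $\mathbb{H}$, and $\mathbb{O}$, whence $\TC={\rm cat}=d$, and with the lower bound $\TC(\RP^d)\ge d$ from Proposition \ref{prop2} since $(a+b)^d\ne 0$ in $H^\ast(\RP^d\times\RP^d;\Z_2)$); and for $d\notin\{1,3,7\}$ one has $b(d+1)>d+1$, so $\TC(\RP^d)=b(d+1)-1$ equals the smallest $N$ admitting a smooth immersion $\RP^d\to\R^N$.

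The step I expect to be the genuine obstacle is the converse implication in $\TC(\RP^d)+1=b(d+1)$ --- showing that an arbitrary open cover of $\RP^d\times\RP^d$ by sets with continuous motion planners is no more efficient than the covers coming from bilinear maps --- together with fusing this with the immersion problem. This demands tight control of the path fibration of $\RP^d$, in particular of how a section defined over the locus of non-orthogonal line pairs obstructs over the orthogonality locus, and it is exactly at this interface, via metastable-range homotopy theory, that the exceptional dimensions $1,3,7$ are forced out and Adams' theorem is invoked. The forward construction from a bilinear map, and the bookkeeping identifying $b(d+1)$ with the immersion dimension, are comparatively routine.
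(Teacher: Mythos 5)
The paper does not prove this statement at all: it is quoted directly from [FTY] (it is the combination of their Theorems 4.1 and 7.1, shifted by one because the present paper uses reduced $\TC$). So your proposal has to be judged against the actual Farber--Tabachnikov--Yuzvinsky argument, whose architecture you have correctly reproduced in outline: a dictionary between motion planners on $\RP^d\times\RP^d$ and maps $\R^{d+1}\times\R^{d+1}\to\R^{k+1}$ without zeros off the axes, then a second dictionary between such maps and immersions, with Adams' Hopf-invariant-one theorem isolating $d=1,3,7$.

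There is, however, one genuine and consequential gap: you insist throughout on \emph{bilinear} nonsingular maps, whereas the correct common currency is the class of nonsingular maps in the FTY sense --- continuous maps $\phi$ satisfying only $\phi(\lambda u,\mu v)=\lambda\mu\,\phi(u,v)$, equivalently axial maps $\RP^d\times\RP^d\to\RP^{N}$ (this is also the notion used in the present paper, see the definition of nonsingular map on a vector bundle in the later sections). With your bilinear invariant $b(d+1)$, both halves of your plan break. First, the converse direction of your ``main step'' $\TC(\RP^d)+1=b(d+1)$ fails: the obstruction-theoretic argument extracting a map from an efficient cover of $\RP^d\times\RP^d$ produces only an axial map (a section of a fibration whose fibre is a sphere, hence a $\Z_2\times\Z_2$-biequivariant map $S^d\times S^d\to S^{k}$), and whether every such map can be linearized to a bilinear one is a well-known open problem. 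Second, the implication ``immersion $\RP^d\looparrowright\R^N$ $\Rightarrow$ nonsingular bilinear map into $\R^{N+1}$'' is likewise not available: an immersion gives, via $T\RP^d\oplus\epsilon\cong(d+1)\gamma$ and Hirsch, a bundle monomorphism $(d+1)\gamma\hookrightarrow\epsilon^{N+1}$, which unwinds to a nonsingular map that is linear in one variable only, not bilinear. As written, your plan therefore establishes only the two one-sided inequalities $\TC(\RP^d)\le b(d+1)-1$ and $\mathrm{imm}(\RP^d)\le b(d+1)-1$ and cannot conclude $\TC(\RP^d)=\mathrm{imm}(\RP^d)$. The repair is to replace ``bilinear'' by ``nonsingular'' (or to work with axial maps directly); the remaining ingredients you name --- Schwarz's sectional-category machinery for the path fibration, the equivalence of axial maps with immersions for $N>d$, and Adams' theorem at $N=d$ (the general, non-bilinear nonexistence statement, via the Hopf construction, not merely Bott--Milnor--Kervaire for division algebras) --- are then the right ones and match [FTY].
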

Corollary 8.5 of \cite{FTY} states that for $d$ odd one has a stronger upper bound
\begin{eqnarray}\label{eq:upper}
\TC(\RP^d)\le 2d-\alpha(d)-k(d)
\end{eqnarray}
where $\alpha(d)$ is the number of ones in the dyadic expansion of $d$ and $k(d)$ depends only on the residue of $d$ modulo $8$ with $k(1)=0$, $k(3)=k(5)=1$ and $k(7)=4$. 
\begin{example} Taking $d=15$ we have $\alpha(15)=4$, $k(15)=4$ and hence inequality (\ref{eq:upper}) gives 
\begin{eqnarray}\label{eq:rp15}
\TC(\RP^{15})\le 22.
\end{eqnarray} 
This upper bound is in fact sharp, see Table 8.1 on page 1868 of \cite{FTY}. Note that \cite{FTY} uses unreduced definition of $\TC(X)$ which is 
 by one bigger than the definition we use in the present paper.   
\end{example}

\subsection{}\label{sec:43} Returning to the general setting of \S \ref{sec:21}, we may fix a scalar product on the vector bundle $\xi$ and consider 
the total space of {\it the unit sphere bundle} $\tilde E\subset E(\xi)$ associated with the vector bundle $\xi$. The canonical projection 
\begin{eqnarray}\label{eq:doublecover}
q: \tilde E\to E
\end{eqnarray} 
is a 2-fold covering and 
$E$ can be viewed as the quotient space of $\tilde E$ with respect to the antipodal involution $\tilde E\to \tilde E$, where $e\mapsto -e$ for $e\in \tilde E$. 
The space $$\tilde E^2_B=\tilde E\times_B\tilde E$$ is the space of all pairs of unit vectors lying in the same fibre of $\xi$. The group $G=\{1, -1\}$ acts on $\tilde E^2_B$ diagonally. We shall denote by
$$\widehat { E}^2_B=\tilde E^2_B/G$$
the quotient space.  Elements of the space $\widehat E^2_B$ are represented by pairs of unit vectors $(e_1, e_2)$ lying in the same fibre, i.e.  such that $pq(e_1)=pq(e_2)$, with the identification $(e_1, e_2)\sim (-e_1, -e_2)$; the equivalence class of $(e_1, e_2)$ will be denoted $[e_1, e_2]$. 
The map 
\begin{eqnarray}\label{hatq}
\hat q: \widehat E^2_B\to E^2_B
\end{eqnarray}
associates with $[e_1, e_2]$ the pair of lines $(\ell_1, \ell_2)$ containing the vectors $e_1$ and $e_2$. Clearly, $\hat q$ is a 2-fold covering. 
The covering transformation (the involution) $\tau: \widehat E^2_B\to \widehat E^2_B$ acts as follows: $$\tau[e_1, e_2]=[-e_1, e_2]=[e_1, -e_2].$$ 

\subsection{} Next we examine the cohomology algebra 
$H^\ast(E;\Z_2)$
of the total space of the projective bundle (\ref{eq:bundle}) with $\Z_2$ coefficients. 
Our goal is to relate $H^\ast(E;\Z_2)$ to the properties of the initial vector bundle $\xi: E(\xi)\to B$ and to its Stiefel - Whitney characteristic classes 
$w_i=w_i(\xi)\in H^i(B;\Z_2)$. 

The characteristic class of the double cover (\ref{eq:doublecover}),
\begin{equation}\label{eq:enhancement}
\vv\in H^1(E;\Z_2),
\end{equation}
is a nonzero class satisfying 
\begin{eqnarray}
q^\ast (\vv) =0\in H^1(\tilde E;\Z_2).
\end{eqnarray} 
\begin{definition}
We shall call the cohomology class $\vv\in H^1(E;\Z_2)$ {\it the enhancement of the bundle (\ref{eq:bundle}) determined by the bundle $\xi$}. 
\end{definition}
The cohomology class $\vv\in H^1(E;\Z_2)$ furnishes an additional structure on the projective bundle $p: E\to B$. 
The same projective bundle $p:E\to B$ may arise from different (not isomorphic) vector bundles $\xi$ and $\xi'$ and typically their enhancements will be different. 

\begin{lemma}\label{lm:proj} Let $\xi$ be a rank $d+1$ vector bundles of over a CW complex $B$. Let 
$\xi'=\xi\otimes L$, where $L$ is a line bundle over $B$. Then the projectivisation $p: E\to B$ of $\xi$ can be naturally identified with the projectivisation of $\xi'$. If 
$\vv, \vv' \, \in\,  H^1(E;\Z_2)$ denote the enhancements corresponding to the vector bundles $\xi$ and $\xi'$ correspondingly, then 
$$\vv'=\vv+p^\ast(w_1(L)),$$ where 
$w_1(L)\in H^1(B;\Z_2)$ is the first Stiefel - Whitney class of $L$.
\end{lemma}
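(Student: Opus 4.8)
The plan is to establish two separate assertions: first, that $\xi$ and $\xi' = \xi \otimes L$ have canonically homeomorphic projectivisations; second, that their enhancements differ by $p^\ast(w_1(L))$. For the first point, I would observe that tensoring each fibre of $\xi$ by the one-dimensional fibre of $L$ gives, for every $b \in B$, a linear isomorphism $\xi_b \xrightarrow{\sim} \xi_b \otimes L_b$ that is well-defined up to a nonzero scalar (the scalar being the choice of generator of $L_b$). Since projectivisation kills the scalar ambiguity, this induces a genuine, well-defined homeomorphism on the fibrewise projective spaces, compatible with the projections to $B$; this is the natural identification $p : E \to B$ of the two projectivisations. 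I would phrase this locally: over a trivialising open set $U$ for $L$, the map $\xi|_U \to \xi'|_U$ is literally multiplication by a chosen local frame of $L$, and on transition functions the frame-change scalars cancel after passing to $\RP^d$.

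For the second, and substantive, point, I need to track how the double covers $\tilde E \to E$ differ for the two vector bundle structures. After fixing metrics, $\tilde E$ (resp. $\tilde E'$) is the unit sphere bundle of $\xi$ (resp. $\xi'$); a point of $\tilde E$ over $\ell \in E$ is a choice of unit vector in the line $\ell \subset \xi_b$, and similarly for $\tilde E'$ using $\ell \otimes L_b \subset \xi'_b$. Over a trivialising chart $U$ for $L$ these two unit-sphere bundles are identified, but the identification involves normalising the local frame of $L$, i.e. on overlaps $U_\alpha \cap U_\beta$ the two $\Z_2$-covers are glued by the sign of the transition function of $L$. Concretely, $\tilde E'$ is the fibre product (over $E$) of $\tilde E$ with the orientation (sign) double cover of $p^\ast L \to E$. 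Translating this to characteristic classes of double covers, the class classifying $\tilde E' \to E$ is the sum of the class classifying $\tilde E \to E$ and the class classifying the sign cover of $p^\ast L$, which is $p^\ast w_1(L)$. Hence $\vv' = \vv + p^\ast(w_1(L))$ in $H^1(E;\Z_2)$, using that $H^1(-;\Z_2)$ classifies double covers and that this classification is additive under the fibre-product (Whitney-sum) operation on covers.

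An alternative, cleaner route for the same step is to verify the identity directly via the defining property of the enhancement: $\vv'$ is characterised as the unique class in $H^1(E;\Z_2)$ pulling back to zero under $q'^\ast : H^1(E;\Z_2) \to H^1(\tilde E';\Z_2)$, where $\tilde E'$ is the sphere bundle of $\xi'$. It therefore suffices to show that $\vv + p^\ast(w_1(L))$ pulls back to zero under $q'^\ast$ and is nonzero; nonvanishing follows because its pullback under $\hat q$ for the relevant cover is controlled, or more simply because $\vv'$ exists and is unique so any class with the vanishing-pullback property equals it. To see the pullback vanishes, I would use the explicit model $\tilde E' = \tilde E \times_E S(p^\ast L)$ just described, under which $q'$ factors through $\tilde E$ and through the sign cover of $p^\ast L$; then $q'^\ast \vv = 0$ because $\vv$ dies on $\tilde E$ already, and $q'^\ast p^\ast w_1(L) = 0$ because $w_1$ of a line bundle dies on its own sign cover.

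The main obstacle is bookkeeping the precise relationship between $\tilde E'$ and $\tilde E$: one must be careful that tensoring by $L$ is only canonical after choosing a local trivialisation, so the comparison of the two sphere bundles is genuinely twisted by the sign cocycle of $L$, and that this twisting is exactly $p^\ast w_1(L)$ and not, say, its pullback composed with some other sign. Making the identification $\tilde E' \cong \tilde E \times_E S(p^\ast L)$ clean and checking it is an isomorphism of double covers of $E$ (not merely of spaces over $B$) is where care is needed; once that is in hand the cohomological conclusion is formal.
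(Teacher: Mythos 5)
Your first paragraph and the cocycle argument in your second paragraph are sound and are essentially the paper's proof in \v{C}ech form: the paper identifies $\tilde E'=(\tilde E\times_B\tilde S)/G$, where $\tilde S\to B$ is the unit sphere (sign) bundle of $L$ and $G=\{1,-1\}$ acts diagonally, and then reads off $\vv'$ from the criterion that a loop $\gamma$ in $E$ lifts to a closed loop in $\tilde E'$ iff its lift to $\tilde E$ and the lift of $p\circ\gamma$ to $\tilde S$ are both closed or both non-closed, i.e.\ iff $\langle \vv+p^\ast w_1(L),[\gamma]\rangle=0$. Your description of the gluing of $\tilde E'$ by the sign cocycle of $L$ over a trivialising cover is the same identification.

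There is, however, a genuine error in the sentence ``Concretely, $\tilde E'$ is the fibre product (over $E$) of $\tilde E$ with the sign double cover of $p^\ast L\to E$,'' and it breaks your ``alternative, cleaner route.'' That fibre product is $\tilde E\times_B\tilde S$, a $4$-fold cover of $E$ (a $G\times G$-cover), not a double cover; $\tilde E'$ is its quotient by the \emph{diagonal} involution. Equivalently, $\tilde E'$ is the sphere bundle of $\eta\otimes p^\ast L$, where $\eta$ is the tautological line bundle over $E$ with $S(\eta)=\tilde E$, and the operation on double covers that adds first Stiefel--Whitney classes is this contracted product (tensor product of line bundles), not the plain fibre product and not the Whitney sum. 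The distinction is not cosmetic: on the plain fibre product the classes $\vv$, $p^\ast w_1(L)$ and $\vv+p^\ast w_1(L)$ \emph{all} pull back to zero, so the uniqueness characterisation of $\vv'$ cannot single out the sum rather than, say, $\vv$ itself; whereas on the true $\tilde E'$ the projection $q'$ does \emph{not} factor through $\tilde E\to E$, so the step ``$q'^\ast\vv=0$ because $\vv$ dies on $\tilde E$ already'' is unavailable --- in fact $q'^\ast\vv\neq 0$ whenever $p^\ast w_1(L)\neq 0$. To repair the third paragraph, replace the factorisation argument by the parity argument above (both lifts closed or both non-closed), or simply compute $\vv'=w_1(\eta\otimes p^\ast L)=w_1(\eta)+p^\ast w_1(L)=\vv+p^\ast w_1(L)$ once the identification $\eta'=\eta\otimes p^\ast L$ is in hand. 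With that correction your argument coincides with the paper's.
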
 
\begin{proof} 
Suppose that $V$ and $W$ are vector spaces and $\dim W=1$.  Any line $\ell\subset V$ and a nonzero element $w\in W$ determine a line $\ell\otimes w\subset V\otimes W$ and this line $\ell\otimes w$ is independent of the choice of $w\in W$. Thus, the projectivisations of $\xi$ and $\xi'=\xi\otimes L$ can be naturally identified. This fact is well known, see \cite{A}, \S 2.2.

To prove the second statement of Lemma \ref{lm:proj} consider the unit sphere bundle $\tilde E\to B$ of $\xi$ and the unit sphere bundle $\tilde S\to B$ of $L$. 
Both $\tilde E$ and $\tilde S$ carry natural action of the cyclic group of order two $G=\{1, -1\}$. A closed loop $\gamma:[0,1]\to E$ lifts to a closed loop in $\tilde E$ if and only if its evaluation $\langle \vv, [\gamma]\rangle =0$ vanishes. Similarly, the loop $p\circ \gamma$ in $B$ lifts to a closed loop in $\tilde S$ if and only if the evaluation 
$\langle w_1(L), p_\ast[\gamma]\rangle =0$ vanishes. 
The unit sphere bundle $\tilde E'$ of $\xi'=\xi\otimes L$ can be identified with the quotient of the space $\tilde E\times_B \tilde S$ with respect to the diagonal action of $G$, i.e. $\tilde E'=(\tilde E\times_B\tilde S)/G$. Hence a loop $\gamma: [0,1]\to E$ lifts to a closed loop in $\tilde E'$ if and only if  the lift of $\gamma$ into $\tilde E\to E$ and the lift of $p\circ\gamma: [0,1]\to B$ into $\tilde S\to B$ are either both closed or both not closed. Thus, we see that $\gamma$ lifts to a closed loop in $\tilde E'$ iff the evaluation of the class $\vv+p^\ast(w_1(L))\in H^1(E;\Z_2)$ on the homology class $[\gamma]\in H_1(E;\Z_2)$ vanishes. This shows that the enhancement corresponding to $\xi'$ equals $\vv +p^\ast(w_1(L)).$
\end{proof}

Next we describe the cohomology algebra of the projective bundle.

\begin{lemma}\label{lm:coh}
Consider the bundle of projective spaces $p: E\to B$ obtained as the projectivization of a vector bundle $\xi$ of rank $d+1$, where $d\ge 1$, over a CW-complex $B$. Then:
 
\begin{enumerate}
\item[{(a)}]
 every cohomology class $u\in H^k(E;\Z_2)$ has a unique representation in the form
\begin{eqnarray*}
u=a_k+a_{k-1}\cdot \vv+ a_{k-2}\cdot \vv^2+\dots + a_{k-d}\cdot \vv^d,
\end{eqnarray*}
where $a_i\in H^i(B;\Z_2)$ and the product $a_i\cdot \vv^{k-i}$ is understood as the cup-product $p^\ast(a_i)\cup \vv^{k-i}$. 
\item[{(b)}]  In the algebra $H^\ast(E;\Z_2)$ one has the following relation 
\begin{equation}\label{eq:rel}
\vv^{d+1} = w_{d+1} +w_{d}\cdot \vv +w_{d-1}\cdot \vv^2+ \dots+ w_1\cdot \vv^d,
\end{equation}
where $w_i=w_i(\xi)\in H^i(B;\Z_2)$ are the Stiefel - Whitney classes of the bundle $\xi$, $i=0, 1, \dots, d+1$.
\end{enumerate}
\end{lemma}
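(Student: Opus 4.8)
The plan is to exploit that $p\colon E\to B$ is the projectivisation of $\xi$ and to run the Leray--Hirsch argument, after first identifying the enhancement $\vv$ with the first Stiefel--Whitney class of the tautological line bundle over $E$. Over the total space $E$ there is a tautological line sub-bundle $\gamma\subset p^\ast\xi$ whose fibre over a line $\ell\in E$ is $\ell$ itself, viewed inside $(p^\ast\xi)_\ell=\xi_{p(\ell)}$. With the scalar product fixed in \S\ref{sec:43}, the unit sphere bundle $S(\gamma)$ is exactly the double cover $q\colon\tilde E\to E$ of (\ref{eq:doublecover}): a unit vector $e$ in a fibre of $\xi$ is the same datum as the line $\mathbb R e$ together with a unit vector in it. Hence the characteristic class of the double cover (\ref{eq:doublecover}) equals $w_1(\gamma)$, that is $\vv=w_1(\gamma)$. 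Restricting to a fibre $p^{-1}(b)\cong\RP^d$ turns $\gamma$ into the canonical line bundle over $\RP^d$, so $\vv|_{p^{-1}(b)}$ is the generator $t$ of $H^\ast(\RP^d;\Z_2)=\Z_2[t]/(t^{d+1})$; in particular $\vv\neq 0$ and $q^\ast\vv=0$, consistent with the description given just before the lemma.

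For part (a) I would invoke the Leray--Hirsch theorem for the bundle $p\colon E\to B$ (legitimate since $B$ is a CW complex and $\Z_2$ is a field): the classes $1,\vv,\vv^2,\dots,\vv^d$ restrict on each fibre to the additive basis $1,t,\dots,t^d$ of $H^\ast(\RP^d;\Z_2)$, so the $H^\ast(B;\Z_2)$-module homomorphism
\[
H^\ast(B;\Z_2)\otimes_{\Z_2}\langle 1,\vv,\dots,\vv^d\rangle\ \longrightarrow\ H^\ast(E;\Z_2),\qquad a\otimes \vv^{\,j}\mapsto p^\ast(a)\cup\vv^{\,j},
\]
is an isomorphism. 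Reading off this isomorphism in each degree $k$ gives precisely the unique representation $u=a_k+a_{k-1}\vv+\dots+a_{k-d}\vv^{\,d}$ with $a_i\in H^i(B;\Z_2)$ asserted in (a).

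For part (b) I would use the rank-$d$ quotient bundle $Q=p^\ast\xi/\gamma$ over $E$; the scalar product gives a splitting $p^\ast\xi\cong\gamma\oplus Q$, so the Whitney product formula yields $w(p^\ast\xi)=w(\gamma)\,w(Q)=(1+\vv)\,w(Q)$ in $H^\ast(E;\Z_2)$, while $w(p^\ast\xi)=p^\ast w(\xi)=1+w_1+\dots+w_{d+1}$ with $w_i=p^\ast(w_i(\xi))$. Solving $(1+\vv)\,w(Q)=1+w_1+\dots+w_{d+1}$ degree by degree for the homogeneous parts of $w(Q)$ gives $w_k(Q)=\sum_{i=0}^{k}w_i\,\vv^{\,k-i}$ for every $k\ge 0$ (with $w_0=1$). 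Since $Q$ has rank $d$ one has $w_{d+1}(Q)=0$, so
\[
0=\sum_{i=0}^{d+1}w_i\,\vv^{\,d+1-i}=\vv^{\,d+1}+w_1\vv^{\,d}+\dots+w_d\vv+w_{d+1},
\]
and, working mod $2$, this is exactly the relation (\ref{eq:rel}).

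The argument is mostly bookkeeping once the geometric setup is in place; the two steps that really carry content are the identification $\vv=w_1(\gamma)$ --- needed so that the fibre-restriction hypothesis of Leray--Hirsch is actually verified --- and the remark that the splitting $p^\ast\xi\cong\gamma\oplus Q$ together with the Whitney product formula are available with $\Z_2$-coefficients without any orientability assumption. I do not expect any serious obstacle beyond making these two points precise.
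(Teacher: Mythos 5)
Your proof is correct. For part (a) it follows the paper's argument exactly: identify the restriction of $\vv$ to each fibre with the generator of $H^1(\RP^d;\Z_2)$ and invoke Leray--Hirsch. Your identification $\vv=w_1(\gamma)$ via the tautological line bundle is a clean way to justify that fibre restriction (the paper simply asserts it). For part (b) the paper gives no argument at all, merely citing the relation as ``one of the definitions of the Stiefel--Whitney classes'' in Husemoller, Chapter 16; you instead derive it from the orthogonal splitting $p^\ast\xi\cong\gamma\oplus Q$, the Whitney product formula, and the vanishing $w_{d+1}(Q)=0$ for the rank-$d$ quotient. This is the standard Grothendieck-style computation and is logically equivalent (the two definitions of Stiefel--Whitney classes agree by the uniqueness axioms), so what you gain is a self-contained verification where the paper outsources to a reference. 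The degree-by-degree solution $w_k(Q)=\sum_{i=0}^k w_i\,\vv^{k-i}$ checks out, and no step requires orientability since everything is mod $2$. No gaps.
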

\begin{proof}
The restriction of the class $\vv$ on each fibre $\RP^d$ is the nonzero class $\beta\in H^1(\RP^d;\Z_2)$. 
Therefore the powers $\vv^i\in H^i(E;\Z_2)$ are nonzero for $i=1, 2, \dots, d$, since the restriction of the class $\vv^i$ onto the fibre $\RP^d$ equals $\beta^i\in H^i(\RP^d;\Z_2)$ and  is nonzero. Thus we see that Leray - Hirsch theorem is applicable. 
Statement  (a) follows directly from the Leray - Hirsch theorem. Statement (b) is one of the definitions of the Stiefel - Whitney classes, see \cite{Hu}, chapter 16. 
\end{proof}

\subsection{} As above, consider a vector bundle $\xi$ of rank $d+1$ over $B$ and its projectivisation $p: E\to B$. 
We shall deal with the bundle $E^2_B=E\times_B E$ with fibre $\RP^d\times\RP^d$ over $B$ associated with $\xi$. 
Recall that the total space $E^2_B$ is the space of pairs $(e, e')\in E^2$ such that $p(e)=p(e')$.  The space $E^2_B$ 
has two 2-fold coverings $\tilde E\times E\to E^2_B$ and $E\times \tilde E\to E^2_B$ where $\tilde E\subset E(\xi)$ is the unit sphere bundle of $\xi$. 
These two 2-folds coverings correspond to two cohomology classes 
\begin{eqnarray}\label{eq:vlvr}
\vv_L, \vv_R\in H^1(E^2_B;\Z_2),
\end{eqnarray} {\it the left and the right enhancements}. 
Similarly to Lemma \ref{lm:coh} we have:

\begin{lemma}\label{lm:coh1}
(a) Every cohomology class $u\in H^k(E^2_B;\Z_2)$ has a unique representation in the form
\begin{eqnarray}
u=\sum_{i\le d, \, \, j\le d} a_{ij}\cdot \vv_L^i\cdot \vv_R^j,
\end{eqnarray}
where $a_{ij}\in H^{k-i-j}(B;\Z_2)$.
(b) Each of the cohomology classes $\vv_L$ and $\vv_R$ satisfies the relation (\ref{eq:rel}). 
\end{lemma}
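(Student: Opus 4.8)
The plan is to deduce Lemma~\ref{lm:coh1} from the Leray--Hirsch theorem applied to the fibre bundle $E^2_B\to B$ with fibre $\RP^d\times\RP^d$, in exact analogy with the proof of Lemma~\ref{lm:coh}. First I would observe that the two projections $\mathrm{pr}_1,\mathrm{pr}_2\colon E^2_B\to E$ (sending $(e,e')$ to $e$ and $e'$ respectively) are bundle maps over $B$, and that the left and right enhancements are the pullbacks $\vv_L=\mathrm{pr}_1^\ast(\vv)$ and $\vv_R=\mathrm{pr}_2^\ast(\vv)$. Indeed, the covering $\tilde E\times_B E\to E^2_B$ is the pullback under $\mathrm{pr}_1$ of the covering $q\colon\tilde E\to E$, so its characteristic class is $\mathrm{pr}_1^\ast(\vv)$; similarly for $\vv_R$. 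Restricting to a fibre $\RP^d\times\RP^d$ over a point $b\in B$, the class $\vv_L^i\vv_R^j$ restricts to $\beta_1^i\beta_2^j$ where $\beta_1,\beta_2\in H^1(\RP^d;\Z_2)$ are the generators of the two factors; the monomials $\{\beta_1^i\beta_2^j : 0\le i,j\le d\}$ form an additive basis of $H^\ast(\RP^d\times\RP^d;\Z_2)$ by the K\"unneth formula.

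Next I would invoke the Leray--Hirsch theorem: since the classes $\vv_L^i\vv_R^j$ ($0\le i,j\le d$) restrict to a basis of the cohomology of the fibre, $H^\ast(E^2_B;\Z_2)$ is a free module over $H^\ast(B;\Z_2)$ with this set of monomials as a basis. This is precisely statement~(a): every $u\in H^k(E^2_B;\Z_2)$ has a unique expression $u=\sum_{i\le d,\,j\le d} a_{ij}\cdot\vv_L^i\cdot\vv_R^j$ with $a_{ij}\in H^{k-i-j}(B;\Z_2)$, where as before $a_{ij}\cdot\vv_L^i\vv_R^j$ means $p^\ast(a_{ij})\cup\vv_L^i\cup\vv_R^j$ with $p\colon E^2_B\to B$ the structure map. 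For statement~(b), the relation~(\ref{eq:rel}) for $\vv$ holds in $H^\ast(E;\Z_2)$ by Lemma~\ref{lm:coh}(b); applying the ring homomorphism $\mathrm{pr}_1^\ast$ (which commutes with $p^\ast$ on the base, since $p\circ\mathrm{pr}_1$ equals the structure map of $E^2_B$) carries this relation to the identical relation with $\vv_L$ in place of $\vv$, and likewise $\mathrm{pr}_2^\ast$ yields the relation for $\vv_R$.

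I do not expect a serious obstacle here; the only points requiring a little care are the identification of $\vv_L,\vv_R$ as the pullbacks of $\vv$ along the two bundle projections (so that the fibre restrictions are the expected generators and the relation~(\ref{eq:rel}) transports correctly), and the verification that Leray--Hirsch applies, i.e.\ that the chosen classes genuinely restrict to a fibrewise basis over every component of $B$. Both are routine: the first is a naturality property of characteristic classes of covering spaces under pullback, and the second reduces to the K\"unneth computation for $\RP^d\times\RP^d$ together with the fact that $\vv|_{\RP^d}=\beta$ established in the proof of Lemma~\ref{lm:coh}. One should also note that the coefficient $a_{ij}$ lying in degree $k-i-j$ forces $a_{ij}=0$ whenever $i+j>k$, so the sum is automatically finite and well-defined.
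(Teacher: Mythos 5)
Your proposal is correct and follows exactly the route the paper takes: the paper's proof is the single line ``This follows from Leray--Hirsch theorem,'' and your write-up simply supplies the details (the identification $\vv_L=\pi_1^\ast(\vv)$, $\vv_R=\pi_2^\ast(\vv)$, which the paper itself records later in the proof of Corollary \ref{cor:11}, the K\"unneth basis of the fibre, and the pullback of relation (\ref{eq:rel}) along the two projections for part (b)). No gaps.
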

\begin{proof}
This follows from Leray - Hirsch theorem. 
\end{proof}

%

One may express the characteristic class of the covering (\ref{hatq}) in terms of the enhancements $\vv_L$, $\vv_R$ as explained below. 

\begin{lemma}\label{lm:21}
Let $\xi$ be a vector bundle of rank $d+1$, with $d\ge 2$, over a finite CW-complex $B$ and let $p:E\to B$ be the bundle with fibre $\RP^d$ obtained as projectivization of $\xi$. Then the  the characteristic class of the 2-fold covering $\hat q: \widehat{E^2_B}\to E^2_B$ (defined in \S \ref{sec:43}) 
coincides with $\alpha=\vv_L+\vv_R\in H^1(E^2_B;\Z_2)$. 
\end{lemma}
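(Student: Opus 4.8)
The plan is to identify the total space $\widehat{E^2_B}$ explicitly as a fibre product of the two sphere-bundle coverings, and then read off its characteristic class. Recall that $\widehat{E^2_B} = \tilde E^2_B/G$ where $G=\{1,-1\}$ acts diagonally on $\tilde E\times_B\tilde E$, and that $\hat q$ sends $[e_1,e_2]$ to the pair of lines spanned by $e_1$ and $e_2$. First I would observe that $E^2_B$ carries the two coverings $q_L: \tilde E\times_B E\to E^2_B$ and $q_R: E\times_B\tilde E\to E^2_B$, classified by $\vv_L$ and $\vv_R$ respectively; here $\tilde E\times_B E$ is the unit sphere bundle of the left factor fibred over $E^2_B$, and similarly on the right. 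The key point is that $\widehat{E^2_B}$ is canonically the fibre product $(\tilde E\times_B E)\times_{E^2_B}(E\times_B\tilde E)$ divided by nothing extra — more precisely, a point of that fibre product is a pair of lines in a common fibre together with a choice of unit vector on each line, which is exactly a $G$-orbit $[e_1,e_2]$ with $(e_1,e_2)\sim(-e_1,-e_2)$ once one remembers that choosing $(e_1,e_2)$ up to the diagonal sign is the same as choosing $(e_1,e_2)$ and then forgetting the overall sign. One should check carefully that the diagonal quotient $\tilde E^2_B/G$ is naturally isomorphic, as a cover of $E^2_B$, to this fibre product; this is the step where one must be attentive, since a priori $\tilde E^2_B/G$ has order-$4$ fibres over $E^2_B$ modulo the $G$-action, giving a $2$-fold cover, and one needs to match it with the fibre product, which also has $2$-element fibres over $E^2_B$ (the fibre product of two $2$-element sets over a point is $4$ elements, but the relevant cover here is the \emph{pullback} $q_L^{-1}\cap q_R^{-1}$ structure — I will phrase it as: $\widehat{E^2_B}$ is the cover whose monodromy is the product of the monodromies of $q_L$ and $q_R$).

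Granting the identification, the characteristic class of a product of two double covers is the sum of their characteristic classes: if $\pi_1,\pi_2$ are double covers of a space $Y$ classified by $c_1,c_2\in H^1(Y;\Z_2)$, then the double cover with monodromy homomorphism $\rho_1\cdot\rho_2:\pi_1(Y)\to\{\pm1\}$ is classified by $c_1+c_2$. Concretely, a loop $\gamma$ in $E^2_B$ lifts to a closed loop in $\widehat{E^2_B}$ if and only if its lifts to $\tilde E\times_B E$ and to $E\times_B\tilde E$ are either both closed or both not closed, i.e. iff $\langle\vv_L,[\gamma]\rangle=\langle\vv_R,[\gamma]\rangle$, i.e. iff $\langle\vv_L+\vv_R,[\gamma]\rangle=0$. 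This is exactly the same loop-lifting argument already used in the proof of Lemma \ref{lm:proj}, and I would simply mirror that computation. Hence the characteristic class of $\hat q$ is $\vv_L+\vv_R=\alpha$.

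The main obstacle is the first step: cleanly justifying that $\tilde E^2_B/G$ (diagonal action) is the double cover of $E^2_B$ with monodromy equal to the product of the monodromies of the two one-sided sphere-bundle covers. The cleanest route is the loop-lifting criterion: a loop $\gamma$ in $E^2_B$ based at $(\ell_1,\ell_2)$ lifts to $\widehat{E^2_B}$ iff one can choose unit vectors $e_1\in\ell_1$, $e_2\in\ell_2$ and transport them consistently around $\gamma$ modulo the simultaneous sign flip — equivalently iff the sign picked up by $e_1$ and the sign picked up by $e_2$ agree. Since $e_1$ picks up the sign $\langle\vv_L,[\gamma]\rangle$ and $e_2$ picks up $\langle\vv_R,[\gamma]\rangle$, the two agree exactly when $\langle\vv_L+\vv_R,[\gamma]\rangle=0$, which determines the characteristic class. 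One subtlety worth a sentence: the hypothesis $d\ge2$ guarantees $\RP^d$ is simply connected is \emph{false} ($\pi_1(\RP^d)=\Z_2$), so the role of $d\ge2$ must be only that $\vv_L,\vv_R$ are defined and the enhancement discussion of \S\ref{sec:43} applies; I would not invoke any fibre-connectivity statement beyond what Lemma \ref{lm:coh1} already provides, and I would note that the argument is in fact insensitive to the value of $d$.
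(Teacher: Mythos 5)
Your proposal is correct and follows essentially the same route as the paper: the paper's proof is exactly the loop-lifting criterion you settle on, namely that a loop in $E^2_B$ lifts to a closed loop in $\widehat{E^2_B}$ iff its lifts to $\tilde E\times_B E$ and $E\times_B\tilde E$ are both closed or both open, which identifies the characteristic class as $\vv_L+\vv_R$. (Your initial phrasing of $\widehat{E^2_B}$ as ``the fibre product divided by nothing extra'' is not right as stated --- the fibre product is a $4$-fold cover and $\widehat{E^2_B}=\tilde E^2_B/G$ is its quotient by the diagonal involution --- but you correctly abandon that formulation in favour of the monodromy-product description, which is all the argument needs.)
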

\begin{proof} The total space of the unit sphere bundle $\tilde E\to B$ has an action of $G=\{1, -1\}$ and the total space of the bundle 
$\tilde E^2_B=\tilde E\times_B\tilde E$ carries an action of $G\times G$. The classes $\vv_L$ and $\vv_R$ are characterised by the following property. If $\gamma: [0,1]\to E^2_B$ is a closed loop then its lift into $\tilde E\times_B E\to E^2_B$ is a closed loop iff and only if the value of the class 
$\vv_L$ on the homology class of $\gamma$ is $0$. Similarly, the lift of $\gamma$ into the covering $E\times_B\tilde E\to E^2_B$ is a closed loop iff $\vv_R(\gamma)=0$. 
Now, a loop $\gamma$ in $E^2_B$ lifts into the covering $\widehat{E^2_B}\to E^2_B$ if and only if its lifts with respect to the coverings 
$\tilde E\times_B E\to E^2_B$ and $E\times_B\tilde E\to E^2_B$ are either both closed or both not closed, i.e. when either 
$\vv_L(\gamma)=0$ and $\vv_R(\gamma)=0$, or when $\vv_L(\gamma)=1$ and $\vv_R(\gamma)=1$. We see that a loop 
$\gamma$ in $E^2_B$ lifts into the covering $\widehat{E^2_B}\to E^2_B$ iff $(\vv_L+\vv_R)(\gamma)=0$. 
\end{proof}

\begin{corollary}\label{cor:11}
The homomorphism $$\Delta^\ast: H^\ast(E^2_B;\Z_2)\to H^\ast(E;\Z_2),$$ induced on $\Z_2$-cohomology by the 
diagonal map, $\Delta: E\to E^2_B$, satisfies 
\begin{eqnarray}
\Delta^\ast(\vv_L)= \Delta^\ast(\vv_R)=\vv.
\end{eqnarray} 
Moreover, the sum $u=\vv_L+\vv_R$ is the only nonzero cohomology class in $H^1(E^2_B;\Z_2)$ satisfying $\Delta^\ast(u)=0$. 
\end{corollary}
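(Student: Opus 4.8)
The plan is to combine Lemma \ref{lm:21} with the Leray--Hirsch description of $H^\ast(E;\Z_2)$ from Lemma \ref{lm:coh}. First I would identify the two 2-fold coverings $\tilde E\times_B E\to E^2_B$ and $E\times_B\tilde E\to E^2_B$ as pullbacks along the two projections $\mathrm{pr}_1,\mathrm{pr}_2\colon E^2_B\to E$ of the single covering $q\colon\tilde E\to E$ of \S\ref{sec:43}; indeed $\tilde E\times_B E = \mathrm{pr}_1^\ast(\tilde E)$ and $E\times_B\tilde E = \mathrm{pr}_2^\ast(\tilde E)$. Since the characteristic class of $q$ is $\vv$, naturality of characteristic classes of double covers gives $\vv_L=\mathrm{pr}_1^\ast(\vv)$ and $\vv_R=\mathrm{pr}_2^\ast(\vv)$. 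Now $\mathrm{pr}_1\circ\Delta=\mathrm{pr}_2\circ\Delta=\id_E$, so $\Delta^\ast(\vv_L)=\Delta^\ast\mathrm{pr}_1^\ast(\vv)=\vv$ and likewise $\Delta^\ast(\vv_R)=\vv$. Alternatively, and perhaps more in the spirit of the paper's loop-based arguments, one can observe directly that the pullback of the covering $\tilde E\times_B E\to E^2_B$ along $\Delta$ is exactly $q\colon\tilde E\to E$ (a pair $(e,e)$ lifts along this covering iff $e$ lifts along $q$), whence $\Delta^\ast(\vv_L)=\vv$, and symmetrically for $\vv_R$. In particular $\Delta^\ast(\vv_L+\vv_R)=\vv+\vv=0$, which reproves the relevant half of Lemma \ref{lm:21} in a way consistent with it.

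For the "only" assertion I would argue as follows. By Lemma \ref{lm:coh1}(a), any class $u\in H^1(E^2_B;\Z_2)$ has the form $u=a+b\,\vv_L+c\,\vv_R$ with $a\in H^1(B;\Z_2)$ and $b,c\in\Z_2$. Applying $\Delta^\ast$ and using $\Delta^\ast(\vv_L)=\Delta^\ast(\vv_R)=\vv$ together with $p\circ\Delta=p$ (so $\Delta^\ast p^\ast = p^\ast$), we get $\Delta^\ast(u)=p^\ast(a)+(b+c)\vv\in H^1(E;\Z_2)$. By Lemma \ref{lm:coh}(a) this class vanishes iff $p^\ast(a)=0$ and $b+c=0$. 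The injectivity of $p^\ast$ on $H^1(B;\Z_2)$ is immediate from Lemma \ref{lm:coh}(a) (the representation $a_1 + a_0\vv$ of a degree-$1$ class is unique), so $a=0$; and $b+c=0$ in $\Z_2$ forces $b=c$. Hence the only solutions are $u=0$ (when $b=c=0$) and $u=\vv_L+\vv_R$ (when $b=c=1$), which is the claim.

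I do not anticipate a serious obstacle here; the statement is essentially a bookkeeping consequence of the Leray--Hirsch basis and the naturality of the characteristic class of a double cover. The one point that needs a little care is making sure the uniqueness part of Lemma \ref{lm:coh1}(a) is invoked correctly in low degree: a priori a class in $H^1(E^2_B;\Z_2)$ could involve $\vv_L^i\vv_R^j$ only with $i+j\le 1$, so the expansion really is $a+b\,\vv_L+c\,\vv_R$ with no cross term $\vv_L\vv_R$, and the coefficients $b,c$ live in $H^0(B;\Z_2)=\Z_2$. With that noted, the computation of $\Delta^\ast$ on this three-term expression and the comparison with the (also unique) Leray--Hirsch expansion in $H^1(E;\Z_2)$ finishes the proof.
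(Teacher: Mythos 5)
Your proposal is correct and follows essentially the same route as the paper: the first assertion via $\vv_L=\pi_1^\ast(\vv)$, $\vv_R=\pi_2^\ast(\vv)$ and $\pi_i\circ\Delta=\id_E$, and the uniqueness assertion via the Leray--Hirsch expansions of Lemma \ref{lm:coh1}(a) and Lemma \ref{lm:coh}(a). Your added remarks (the pullback description of the two coverings justifying $\pi_i^\ast(\vv)=\vv_{L/R}$, and the explicit appeal to uniqueness of the degree-one expansion to get injectivity of $p^\ast$) only make explicit what the paper leaves to ``the definitions.''
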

\begin{proof}
For $i=1, 2$ consider the projection $\pi_i: E^2_B \to E$ onto the first or the second factor correspondingly. Then clearly $\pi^\ast_1(\vv)=\vv_L$ and $\pi_2^\ast(\vv)=\vv_R$ as follows from the definitions. Since $\pi_1\circ \Delta= 1_E=\pi_2\circ \Delta$, we obtain $\Delta^\ast(\vv_L)=\Delta^\ast(\pi^\ast_1(\vv))=(\pi_1\circ\Delta)^\ast(\vv) = \vv$ and similarly $\Delta^\ast(\vv_R)=\vv$. 

To prove the second statement consider a class $u\in H^1(E^2_B;\Z_2)$. By Lemma \ref{lm:coh1}  class $u$ has a unique representation in the form 
$u=a+\alpha_L\cdot \vv_L+ \alpha_R\cdot \vv_R$ where $a\in H^1(B;\Z_2)$ and $\alpha_L, \alpha_R\in \Z_2$. Then $\Delta^\ast(u) = a+ (\alpha_L+\alpha_R)\cdot \vv$. We see that $\Delta^\ast(u)=0$ iff $a=0$ and $\alpha_L=\alpha_R$. This shows that the sum $u=\vv_L+\vv_R$ is the only nonzero class in $H^1(E^2_B;\Z_2)$ 
satisfying $\Delta^\ast(u)=0$. 
\end{proof}

Recall that {\it the height} $\h(u)$ of a cohomology class $u$ is defined as the maximal integer $k$ such that $u^k\not=0$. 

\begin{corollary}\label{cor:13}
For the projective bundle $p:E\to B$ arising as projectivisation of  a vector bundle $\xi$ of rank $d+1>1$ the following lower bound holds
\begin{eqnarray}
\TC[p:E\to B] \ge \h(\vv_L+\vv_R),
\end{eqnarray}
where $\vv_L, \, \vv_R\in H^1(E^2_B)$  are the left and right enhancements defined above. 
\end{corollary}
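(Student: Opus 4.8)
The plan is to apply the cohomological lower bound of Proposition~\ref{prop2} with coefficient ring $R=\Z_2$ to the single class $u=\vv_L+\vv_R\in H^1(E^2_B;\Z_2)$.

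First I would invoke Corollary~\ref{cor:11}, which gives $\Delta^\ast(\vv_L)=\Delta^\ast(\vv_R)=\vv$ and hence
$$\Delta^\ast(u)=\Delta^\ast(\vv_L)+\Delta^\ast(\vv_R)=0;$$
thus $u$ lies in $\ker[\Delta^\ast:H^\ast(E^2_B;\Z_2)\to H^\ast(E;\Z_2)]$. Since $\Delta^\ast$ is a ring homomorphism this kernel is an ideal, so every power $u^k$ again lies in the kernel, as $\Delta^\ast(u^k)=(\Delta^\ast u)^k=0$.

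Next I would use the definition of height: setting $k=\h(u)$, we have $u^k\ne 0$ in $H^\ast(E^2_B;\Z_2)$. Then $u^k=u\cdot u\cdots u$ ($k$ factors) is a nonzero product of $k$ elements of $\ker\Delta^\ast$, so the cup-length of $\ker\Delta^\ast$ is at least $k$, and Proposition~\ref{prop2} yields
$$\TC[p:E\to B]\ \ge\ k\ =\ \h(\vv_L+\vv_R).$$

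I do not expect a genuine obstacle here: the assertion is an immediate combination of Proposition~\ref{prop2} and Corollary~\ref{cor:11}, the only points worth spelling out being that powers of a class in $\ker\Delta^\ast$ remain in $\ker\Delta^\ast$ and that a nonzero $k$-th power witnesses cup-length at least $k$. The substantive work lies elsewhere: identifying $\vv_L+\vv_R$ with the characteristic class of the covering $\hat q$ (already carried out in Lemma~\ref{lm:21}) and, in the later applications, actually computing $\h(\vv_L+\vv_R)$ in terms of the Stiefel--Whitney classes $w_i(\xi)$ using the normal form of Lemma~\ref{lm:coh1}.
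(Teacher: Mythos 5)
Your proof is correct and follows exactly the paper's route: the paper derives Corollary \ref{cor:13} by combining Proposition \ref{prop2} with Corollary \ref{cor:11}, which is precisely what you do, only spelling out the (routine) facts that powers of $\vv_L+\vv_R$ stay in $\ker\Delta^\ast$ and that a nonzero $k$-th power witnesses cup-length at least $k$.
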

\begin{proof} This follows from Proposition \ref{prop2} and Corollary \ref{cor:11}. 
\end{proof}
\begin{example} Let $\eta$ be the standard line bundle over the projective space $B=\RP^n$. Consider the rank 2 vector bundle $\xi=\eta\oplus \epsilon$, where $\epsilon$ is the trivial line bundle. We shall examine the projectivisation $p: E\to B=\RP^n$ of $\xi$. The full Stiefel - Whitney class of $\xi$ is 
$w(\xi)=1+\beta$, where 
$\beta\in H^1(\RP^n;\Z_2)=H^1(B;\Z_2)$ is the generator. Applying Lemma \ref{lm:coh1} we obtain that the cohomology algebra $H^\ast(E^2_B;\Z_2)$ has 3 polynomial generators: $\beta, \vv_L, \vv_R$ satisfying the following defining relations:
$$\beta^{n+1}=0, \quad \vv_L^2 =\beta\cdot \vv_L, \quad \vv_R^2 =\beta\cdot \vv_R.$$
We see that
$$
(\vv_L+\vv_R)^2=\beta \cdot (\vv_L+\vv_R),
$$
and it follows by induction that 
$$
(\vv_L+\vv_R)^k=\beta^{k-1} \cdot (\vv_L+\vv_R), \quad k\ge 2.
$$
Thus, the power $$(\vv_L+\vv_R)^{n+1}=\beta^n\cdot (\vv_L+\vv_R)$$ is the highest nonzero power of the class $\vv_L+\vv_R$, and Corollary \ref{cor:13} gives in this case:
\begin{equation}\label{eq:15}
\TC[p:E\to B]\ge n+1. 
\end{equation}
Corollary \ref{cor:9} (see also Theorem \ref{thm:sharp}) give an opposite inequality $\TC[p:E\to B]\le n+1$. Thus, we conclude that in this instance 
$\TC[p:E\to B]= n+1$. 
\end{example}
\begin{example} This example is a variation of the previous one. 
Let $\eta$ be the standard line bundle over the projective plane $B=\RP^2$. 
Consider the rank 3 vector bundle $\xi=\eta\oplus \epsilon\oplus \epsilon$ where $\epsilon$ is the trivial line bundle
and its projectivisation $p: E\to B=\RP^2$. It is a locally trivial bundle with fibre $\RP^2$ over $\RP^2$. 
The full Stiefel - Whitney class of $\xi$ is 
$w(\xi)=1+\beta$, where 
$\beta\in H^1(\RP^2;\Z_2)=H^1(B;\Z_2)$ is the generator. 
By Lemma \ref{lm:coh1} the cohomology algebra $H^\ast(E^2_B;\Z_2)$ has 3 polynomial generators: $\beta, \vv_L, \vv_R$ 
satisfying the following relations:
$$\beta^{3}=0, \quad \vv_L^3 =\beta\cdot \vv_L^2, \quad \vv_R^3 =\beta\cdot \vv_R^2.$$
We see that
\begin{eqnarray}\label{eq:16}
\vv_L^4=\beta^2\cdot \vv_L^2, \quad \vv_L^5=0 \quad \mbox{and similarly} \quad \vv_R^4=\beta^2\cdot \vv_R^2, \quad \vv_R^5=0.
\end{eqnarray}
Hence, 
$$
(\vv_L+\vv_R)^5= \vv_L^4\cdot \vv_R+\vv_L\cdot \vv_R^4=\beta^2\cdot \vv_L^2\cdot \vv_R+\beta^2\cdot \vv_L\cdot \vv_R^2 \, \not=\, 0
$$
is nonzero. On the other hand, 
$$
(\vv_L+\vv_R)^6=0,
$$
as one easily checks using the relations (\ref{eq:16}). Corollary \ref{cor:13} gives 
\begin{equation}\label{eq:17}
\TC[p:E\to B]\ge 5. 
\end{equation}
The upper bound of of Proposition \ref{prop1} gives $\TC[p:E\to B]\leq 6$; however, applying Theorem \ref{thm:sharp} we get 
$\TC[p:E\to B]=5$. 
\end{example}
\begin{example}\label{ex:26}
Consider the bundle $p: \RP^3\to S^2$ with fibre $\RP^1=S^1$ obtained from the Hopf bundle $S^3\to S^2$ by factorisation with respect to the 
antipodal involution, see Example \ref{ex:14}. In this example we have 
$
\vv_L^2=w$
 and
 $\vv_R^2=w
$
for $w\in H^2(S^2;\Z_2)$ with $w\not=0$. Therefore, we see that $(\vv_L+\vv_R)^2=\vv_L^2 +\vv_R^2 =0$ and $$\h(\vv_L+\vv_R)=1.$$ This is consistent with the result 
$\TC[p: \RP^3\to S^2]=1$ of Example \ref{ex:14}. 
Note however that $$\h(\vv_L)=\h(\vv_L)=3.$$ 
This example shows that the height of the sum $\vv_L+\vv_R$ can be {\it smaller} than the height of the classes $\vv_L$ and $\vv_R$. 

In fact, one may show that the difference $\h(\vv_L)- \h(\vv_L+\vv_R)$ can be arbitrarily large. 

\end{example}


\section{Sharp upper bound}\label{sec:5}

In this section we establish upper bounds for bundles of projective spaces which improve the general upper bound given by Proposition \ref{prop1}. 
We already considered the case of circle bundles (see Corollary \ref{cor:9} and Theorem \ref{thm:10}) and therefore in this section we shall focus on the case of bundles with fibre $\RP^d$ where $d\ge 2$. We restrict our attention to the projective bundles which arise as projectivizations of vector bundles. 
In some sense, our aim in this section is to find a generalisation of 
Theorem 1 from \cite{CF}
to the parametrized setting,

Let $\xi$ be a vector bundle of rank $d+1$ over a CW-complex $B$.  
We denote by $E(\xi)$ the total space of the bundle $\xi$ and by 
$\xi: E(\xi)\to B$ the projection. The unit sphere bundle of $\xi$ is denoted $\xi: \tilde E\to B$. 
The projectivisation 
$p: E\to B$ of $\xi$ is a locally trivial bundle with fibre $\RP^d$. Here $E$ is the quotient of $\tilde E$ with respect to the antipodal involution. 
Our aim is to establish an upper bound for the parametrized topological complexity
$\TC[p: E\to B]$ strengthening the general upper bound given by Proposition \ref{prop1}. The following is the main result of this section:

 \begin{theorem} \label{thm:sharp}
 Let $\xi$ be a vector bundle of rank $d+1$ over a closed manifold $B$ of dimension $n=\dim B$ and  let $p: E\to B$ be the bundle of real projective spaces 
 $\RP^d$ obtained as the projectivisation of $\xi$. 
 Then $\TC[p:E\to B]\le n+2d-1$.
 \end{theorem}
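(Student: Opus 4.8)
The strategy is to build an explicit open cover of $E^2_B$ with $n+2d$ sets, each admitting a continuous section of $\Pi: E^I_B\to E^2_B$. The starting point is the general principle used already in the proof of Theorem \ref{thm:16}: stratify $E^2_B$ according to how far apart the two lines $(\ell_1,\ell_2)$ in a common fibre $\RP^d$ are, because a pair of \emph{distinct} lines in $\RP^d$ spans a $\RP^1$ and can be joined canonically, while a coincident pair is handled by the constant path. Thus one expects a decomposition into a ``diagonal-type'' piece and pieces indexed by the obstruction to moving one line to the other inside the fibre. Over the diagonal $\Delta(E)$ the constant-path section works as before; the real work is in covering the complement $W=\{[e_1,e_2]\in\widehat E^2_B : e_1\ne \pm e_2\}$ (pushed down to $E^2_B$ via $\hat q$ of \eqref{hatq}).

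First I would localise over the base. Since $B$ is a closed $n$-manifold, fix a good cover $B=V_0\cup\dots\cup V_n$ by $n+1$ sets over each of which $\xi$ is trivial; this is the source of the summand $n$ in $n+2d$. Over each $V_j$ the problem reduces to the fibrewise (unparametrised) motion planning in $\RP^d$ between distinct lines, i.e.\ to a version of the construction behind Theorem 1 of \cite{CF} giving $\TC(\RP^d)$, but now we only need to plan between \emph{distinct} lines, which is cheaper. The key input is Theorem \ref{maxsecatmanifold} from Appendix II: applied appropriately to the fibrewise configuration space of pairs of distinct lines — a bundle over $B$ whose fibre is the complement of the diagonal in $\RP^d\times\RP^d$, a manifold of dimension $2d$ — it should yield that the relevant sectional category over $B$ does not exceed $n+(2d-1)$, the $-1$ coming from the fact that the space of distinct pairs deformation-retracts to a lower-dimensional configuration and, more importantly, that a canonical geodesic/minimal-arc section exists on a dense open set so that one open set is saved compared to the naive $n+2d$.

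Concretely, the middle steps would be: (i) describe, for a pair of distinct lines $(\ell_1,\ell_2)$ in a fixed $V_j$-fibre, the canonical path — rotate $\ell_1$ towards $\ell_2$ inside the projective line they span, which is well-defined once one chooses, continuously, which of the two arcs to traverse; the choice is exactly an orientation datum and its obstruction is a $\Z_2$-class, governed by the enhancement classes $\vv_L,\vv_R$ of \S\ref{sec:43}; (ii) show this canonical path is defined on an open dense subset of the fibrewise distinct-pairs space and that the obstruction locus is covered by the remaining strata; (iii) count: $n+1$ base sets times a fibrewise count, then merge overlapping strata and subtract one using the diagonal piece and the minimal-arc trick, to land at $n+2d$ sets total, i.e.\ $\TC[p:E\to B]\le n+2d-1$. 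Throughout one invokes Proposition \ref{prop3} to pass freely between open covers and partitions, which simplifies the bookkeeping of the strata.

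The main obstacle I anticipate is step (ii): proving that the obstruction to the canonical ``rotate along the spanned $\RP^1$'' section is tame enough that it can be absorbed into only $2d-1$ further open sets, rather than something like $2d$ or more. This is where Theorem \ref{maxsecatmanifold} must be brought to bear in a genuinely parametrized way — one has a manifold bundle over $B$ (the fibrewise space of ordered pairs of distinct lines, fibre of dimension $2d$) and must bound its fibrewise sectional category with respect to $\Pi$ by the dimension of the total space minus one; getting the exact constant $n+2d-1$ rather than the trivial $n+2d$ hinges on exhibiting a section over an open set whose complement has the homotopy type of something of dimension $\le n+2d-1$, for which the closed-manifold hypothesis on $B$ (Poincaré duality, top-class manipulation as in Theorem \ref{thm:10}) is essential. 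I would organise this as: reduce to a fibrewise problem over $B$, apply Theorem \ref{maxsecatmanifold} to the resulting manifold bundle, and then add the constant-path section over the diagonal to complete the cover.
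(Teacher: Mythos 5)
Your proposal correctly identifies several of the relevant objects — the covering $\hat q:\widehat{E^2_B}\to E^2_B$, the enhancement classes $\vv_L,\vv_R$, the role of Theorem \ref{maxsecatmanifold}, and the constant-path section over the diagonal — but the decisive step is missing, and the route you sketch does not lead to the bound. The paper does not construct an explicit cover at all. It first shows (Corollary \ref{cor:29}) that for $d\ge 2$ collapsing the two components of each fibre of $\Pi:E^I_B\to E^2_B$ yields exactly the 2-fold covering $\hat q$, so $\Pi$ and $\hat q$ have the same homological obstruction $\mathfrak o$; by Schwarz's theorem, $\TC[p:E\to B]$ attains the maximal value $n+2d=\dim E^2_B$ if and only if $\mathfrak o^{n+2d}\neq 0$. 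By Lemma \ref{lm:21} the mod-2 characteristic class of $\hat q$ is $\vv_L+\vv_R$, and Theorem \ref{maxsecatmanifold} — applied to the closed manifold $E^2_B$ itself, not to a fibrewise space of distinct pairs — reduces everything to whether $(\vv_L+\vv_R)^{n+2d}=0$. The entire content of the theorem is then the computation of Theorem \ref{thm:33}: expand $(\vv_L+\vv_R)^{n+2d}$ binomially, use Lemma \ref{lm:32} to write $\vv_L^{d+j}\vv_R^{n+d-j}=\bar w_j\bar w_{n-j}\,\vv_L^d\vv_R^d$ in terms of dual Stiefel--Whitney classes, and observe that the terms cancel in pairs by the symmetry $\binom{n+2d}{d+j}=\binom{n+2d}{d+n-j}$, the middle coefficient being even.

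Your step (iii) is where the argument breaks down: ``$n+1$ base sets times a fibrewise count, then merge overlapping strata and subtract one'' is not a mechanism. Multiplying a cover of the base by a fibrewise cover yields a product-type estimate, not the additive bound $n+2d$; additive bounds come from dimension and connectivity arguments, and improving $n+2d$ to $n+2d-1$ requires precisely the vanishing of a single top-dimensional obstruction class. Your step (ii) correctly names that obstruction (the $\Z_2$-choice of arc in the projective line spanned by two distinct lines, governed by $\vv_L+\vv_R$) but never shows it can be absorbed; the only reason it can is the identity $(\vv_L+\vv_R)^{n+2d}=0$, for which you give no argument — and this identity is a nontrivial fact about Stiefel--Whitney classes, not a consequence of density of the minimal-arc section. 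Finally, the closed-manifold hypothesis on $B$ enters only through Theorem \ref{maxsecatmanifold} applied to $E^2_B$ (via Poincar\'e duality on the local system $I_\alpha$ in the appendix), not through a good cover of $B$ or any geodesic construction; a good cover by trivialising sets plays no role in the paper's proof.
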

 This upper bound is by one better than the general upper bound of Proposition \ref{prop1}.

The proof of Theorem \ref{thm:sharp} will be completed at the end of this section. Note that the case $d=1$ of Theorem \ref{thm:sharp} is covered by Corollary \ref{cor:9}. Therefore we shall assume below that $d\ge 2$. 


%
%

The bundle $\Pi: E^I_B\to E^2_B$, which appears in Definition \ref{def1}, has as its fibre over a pair $(e, e')\in E^2_B$ the space $F_{(e, e')}$ of all paths $\gamma: I\to E$ satisfying $\gamma(0)=e$, $\gamma(1)=e'$, and such that the path $p\circ\gamma: I\to B$ is constant. The space $F_{(e, e')}$ is homotopy equivalent to the loop space $\Omega(\RP^d)$. Since $d\ge 2$, we have $\pi_1(\RP^d)=\Z_2$ and hence the space $F_{(e, e')}$ has two connected components. We shall denote by 
\begin{eqnarray}
\widehat {E^I_B}=E^I_B/\sim
\end{eqnarray}
the space of all equivalence classes with respect of the equivalence relation $\gamma\sim \gamma'$ iff $\gamma, \gamma'\in F_{(e, e')}$ and 
$\gamma$ is homotopic to $\gamma'$ relative $\partial I$. In other words, the space $\widehat {E^I_B}$ is obtained from $E^I_B$ by collapsing to a point every connected component of the fibre. The map $\Phi$ (which appears in the diagram below) is the quotient map. 
The map $\widehat\Pi: \widehat E^I_B\to E^2_B$ is well-defined and we have the commutative diagram
\begin{eqnarray}\label{diag:3}
\xymatrix{E^I_B\ar^{\Phi}[r] \ar[rd]_\Pi & \widehat E^I_B\ar[r]^{\Psi}\ar[d]^{\hat \Pi} & \widehat E^2_B\ar[dl]^{\hat q}\\
& E^2_B. &
}
\end{eqnarray}
 The space $\widehat{E^2_B}$ which appears in this diagram is the quotient space of $\tilde E^2_B$ with respect to the diagonal action of the group $G=\{1, -1\}$, see \S\ref{sec:43}. The map $\hat q$ is the quotient map $\widehat{ E^2_B}=\tilde E^2_B/G\to \tilde E^2_B/(G\times G)=E^2_B$. 
 The map $\hat q$ is a 2-fold covering. 
 
 The map $\Psi: \widehat{E^I_B}\to \widehat{E^2_B}$ (see the diagram above) is defined as follows. Given a point $[\gamma]\in F_{(e, e')}$, i.e. $\gamma\in E^I_B$ satisfies $\gamma(0)=e$ and $\gamma(1)=e'$, and $p\circ \gamma: I\to B$ is a constant map. There exists a lift $\tilde \gamma: I\to \tilde E$ into the unit sphere bundle and we set $\Psi[\gamma]$ to be the $G$-orbit of the pair $(\tilde \gamma(0), \tilde\gamma(1))\in \tilde E^2_B$ with respect to the diagonal action of 
 $G=\{1, -1\}$. The lift $\tilde \gamma$ is not unique, but the alternative lift is $-\tilde\gamma$, and the pair $(-\tilde \gamma(0), -\tilde\gamma(1))$ lies in the $G$-orbit of the pair $(\tilde \gamma(0), \tilde\gamma(1))$. 
 Hence, we see that the definition of $\Psi$ is correct.  
 \begin{corollary}\label{cor:29} Let $p: E\to B$ be the projectivisation of a vector bundle $\xi$ of rank $d+1$ where $d\ge 2$, over a finite CW-complex. 
 (1) The map $\Psi: \widehat{E^I_B}\to \widehat{E^2_B}$ is a homeomorphism, it establishes an isomorphism between the 2-fold coverings 
 $\hat \Pi: \widehat {E^I_B}\to E^2_B$ and $\hat q: \widehat{E^2_B}\to E^2_B$.
 (2) The local coefficient system $\tilde H_0(F_{(e, e')})$, formed by the reduced 0-dimensional homology of the fibres of the bundle $\Pi: E^I_B\to E^2_B$, is naturally isomorphic  (via the homomorphism induced by the map $\Psi\circ\Phi$)
to the local system formed by the reduced 0-dimensional homology of the fibres of  $\hat q: \widehat{E^2_B}\to E^2_B$. (3) The homological obstruction for a section of $\Pi: E^I_B\to E^2_B$ coincides with the primary obstruction for a section of the 2-fold covering 
 $\hat q: \widehat{E^2_B}\to E^2_B$. 
 \end{corollary}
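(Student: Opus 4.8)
The plan is to unpack the three assertions of Corollary~\ref{cor:29} one at a time, treating the homeomorphism statement (1) as the foundation on which (2) and (3) rest.

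\textbf{Step 1: $\Psi$ is a homeomorphism of coverings.} First I would check that $\Psi\circ\Phi$ commutes with all the maps in diagram~(\ref{diag:3}), which is essentially built into the construction of $\Psi$: if $[\gamma]\in\widehat{E^I_B}$ lies over $(e,e')\in E^2_B$, then $\Psi[\gamma]$ is the $G$-orbit of $(\tilde\gamma(0),\tilde\gamma(1))$, whose image under $\hat q$ is $(e,e')=\hat\Pi[\gamma]$. The key topological input is that, since $d\ge 2$, each fibre $F_{(e,e')}\simeq\Omega(\RP^d)$ has exactly $\pi_0=\Z_2$, so $\widehat{E^I_B}$ has exactly two points over each $(e,e')$, and likewise $\hat q$ is a $2$-fold covering (noted in \S\ref{sec:43}); thus $\Psi$ is a fibrewise bijection between two $2$-fold coverings of the same base. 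To see it is continuous and open I would exhibit it locally: over a trivialising neighbourhood the lift $\tilde\gamma$ can be chosen continuously in $\gamma$ (the unit sphere bundle is locally trivial and lifting a path against a covering is continuous), so $\Psi$ is locally the composite of continuous maps; a continuous fibrewise bijection of coverings over a locally path-connected, semilocally simply connected base is automatically a covering isomorphism. The content that makes the two components of $F_{(e,e')}$ match the two sheets of $\hat q$ is precisely Lemma~\ref{lm:21}: a loop $\gamma$ in $E^2_B$ lifts to $\widehat{E^2_B}$ iff $(\vv_L+\vv_R)(\gamma)=0$, and on the other side a path-loop in $E^I_B$ stays in one component of the fibre under monodromy exactly when the corresponding lift to $\tilde E\times_B\tilde E$ closes up, i.e.\ the same condition; so the monodromy representations $\pi_1(E^2_B)\to\Z_2$ of $\hat\Pi$ and $\hat q$ coincide, forcing $\Psi$ to be an isomorphism.

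\textbf{Step 2: the local systems.} Statement (2) is then almost formal. The fibre of $\Pi$ over $(e,e')$ is $F_{(e,e')}$, whose reduced $0$-dimensional homology $\tilde H_0(F_{(e,e')};\Z)\cong\Z$ is generated by the difference of its two components; collapsing components is exactly the quotient $\Phi$, and $\Psi\circ\Phi$ identifies this with the difference of the two sheets of $\hat q$ over $(e,e')$, i.e.\ with $\tilde H_0$ of the fibre of $\hat q$. Since $\Psi\circ\Phi$ is compatible with all structure maps by Step~1, the induced map of local coefficient systems is an isomorphism; the monodromy on either side is the sign representation attached to $\vv_L+\vv_R$.

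\textbf{Step 3: matching the obstructions.} For statement (3) I would invoke the standard description of the first (homological) obstruction to sectioning a fibration in terms of the $\tilde H_0$ local system of its fibres (as used for sectional category / $\TC$ computations; cf.\ the Berstein--Schwarz / Costa--Farber framework). Because the bundle $\Pi:E^I_B\to E^2_B$ is fibre-homotopy equivalent --- after collapsing components --- to $\hat\Pi:\widehat{E^I_B}\to E^2_B$, and the latter is isomorphic to $\hat q$ by Step~1, the primary obstruction cocycles for $\Pi$ and for $\hat q$ live in the same cohomology group $H^1(E^2_B;\tilde H_0)$ and are carried to one another by $(\Psi\circ\Phi)_\ast$; hence they agree. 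I expect the main obstacle to be the careful justification in Step~1 that $\Psi$ is continuous with continuous inverse --- i.e.\ that path-lifting into $\tilde E$ can be made locally continuous in the path-space topology and that this suffices to conclude $\Psi$ is a homeomorphism of total spaces, not merely a fibrewise bijection. Once that is in place, (2) and (3) follow by transport of structure.
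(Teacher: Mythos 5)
Your proposal is correct and follows essentially the same route as the paper: establish that $\Psi$ is a fibrewise bijection of $2$-fold covers (using $\pi_0(\Omega\RP^d)=\Z_2$ for $d\ge 2$), deduce that it is a homeomorphism, and then transport the $\tilde H_0$ local systems and the homological obstruction across the diagram. The only divergence is in justifying continuity of $\Psi^{-1}$ — the paper invokes compactness of $\widehat{E^I_B}$ and $\widehat{E^2_B}$, whereas you argue via the covering-space structure of $\hat\Pi$ — and both justifications are at the same (brief) level of rigor.
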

 \begin{proof} Statement (1) and (2) are obvious: the map $\Psi$ is a bijection on each fibre and hence it is bijective. The inverse $\Psi^{-1}$ is continuous due to compactness of the spaces $\widehat{E^I_B}$ and $\widehat{E^2_B}$. Statement (3) follows directly from the definition of the homological obstruction, see \cite{S}, chapters 1 and 2. Recall that the homological obstruction is a class $\mathfrak o\in H^1(E^2_B; {\tilde H}_0(F))$ where $F$ is the fibre, in our case a two-point set, and ${\tilde H}_0(F))$ is viewed as a local coefficient system. The homological obstructions for the fibrations $\Pi$, $\hat \Pi$ 
  and $\hat q$ which appear in diagram \ref{diag:3} are equal. 
 \end{proof}
 
 \begin{corollary}
 The parametrized topological complexity $\TC[p:E\to B]$ attains its maximal value $\dim E^2_B=n+2d$ if and only if the sectional category of the 2-fold cover  $\hat q: \widehat{E^2_B}\to E^2_B$ equals its maximal value $\dim E^2_B=n+2d$, where $n=\dim B$. 
 \end{corollary}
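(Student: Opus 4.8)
The plan is to reduce the statement, in both directions, to a comparison of the two fibrations appearing in diagram (\ref{diag:3}): $\Pi\colon E^I_B\to X$ and $\hat q\colon\widehat{E^2_B}\to X$, where $X:=E^2_B$, related by the fibrewise map $\Psi\circ\Phi\colon E^I_B\to\widehat{E^2_B}$ which satisfies $\hat q\circ(\Psi\circ\Phi)=\Pi$. Since $\TC[p:E\to B]=\secat(\Pi)$ and both $\secat(\Pi)$ and $\secat(\hat q)$ are $\le N:=\dim E^2_B=n+2d$ by Proposition \ref{prop1}, the corollary is equivalent to the equivalence
$$\secat(\Pi)\le N-1\quad\Longleftrightarrow\quad \secat(\hat q)\le N-1 .$$

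One implication is free: since $\hat q\circ(\Psi\circ\Phi)=\Pi$, composing a local section of $\Pi$ over an open set $U$ with $\Psi\circ\Phi$ yields a local section of $\hat q$ over $U$, so $\secat(\hat q)\le\secat(\Pi)$ unconditionally, and in particular $\secat(\Pi)\le N-1$ implies $\secat(\hat q)\le N-1$.

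For the converse I would invoke Schwarz's description of the sectional category by fibrewise joins: $\secat(\Pi)\le N-1$ iff the $N$-fold fibrewise join $\Pi^{\ast N}\to X$ admits a section, and the same for $\hat q$. The map $\Psi\circ\Phi$ induces a morphism $(\Psi\circ\Phi)^{\ast N}\colon\Pi^{\ast N}\to\hat q^{\ast N}$ over $X$; over a point of $X$ it is the $N$-fold join of the map $F_{(e,e')}\to S^0$ collapsing each connected component of a fibre of $\Pi$ onto a point. Because $d\ge2$, each component of $F_{(e,e')}\simeq\Omega(\RP^d)$ is homotopy equivalent to $\Omega S^d$ and so is $(d-2)$-connected; hence this collapsing map has $(d-2)$-connected homotopy fibre, and by the additivity of connectivity under joins its $N$-fold join $F_{(e,e')}^{\ast N}\to S^{N-1}$ has homotopy fibre which is $(Nd-2)$-connected. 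As $Nd-2\ge N$ (because $N(d-1)\ge2$, since $N=n+2d\ge4$ and $d\ge2$), obstruction theory over the $N$-dimensional complex $X$ shows that every section of $\hat q^{\ast N}\to X$ lifts through $(\Psi\circ\Phi)^{\ast N}$ to a section of $\Pi^{\ast N}\to X$; therefore $\secat(\hat q)\le N-1$ forces $\secat(\Pi)\le N-1$. Combining the two implications gives $\secat(\Pi)=N\iff\secat(\hat q)=N$, which is the assertion.

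The main obstacle is the middle step: carefully verifying the connectivity estimate for the induced map on $N$-fold fibrewise joins, and checking that Schwarz's characterisation and the obstruction-theoretic lifting are legitimate here (all the spaces involved are finite-dimensional ANRs, so this should be routine, cf. Corollary \ref{cor:29}). Conceptually, though, the point is simple: joining $N=\dim E^2_B$ copies amplifies the modest $(d-2)$-connectivity of a single fibre component of $\Pi$ well past $\dim E^2_B$, so through the top dimension $\Pi$ and $\hat q$ cannot be distinguished by sectional category --- equivalently, the single ``last'' obstruction to lowering $\secat$ below $N$, living in $H^N(X;-)$, is the same class for $\Pi$ and for $\hat q$.
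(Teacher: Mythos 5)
Your strategy is sound and genuinely different from the paper's. You reduce to the equivalence $\secat(\Pi)\le N-1\Leftrightarrow\secat(\hat q)\le N-1$ with $N=n+2d=\dim E^2_B$, get one implication for free from $\hat q\circ\Psi\circ\Phi=\Pi$, and obtain the converse by lifting a section of the $N$-fold fibrewise join of $\hat q$ through the join of $\Psi\circ\Phi$. The paper instead invokes Schwarz's theorem that, over the $N$-dimensional complex $E^2_B$, each sectional category is maximal if and only if the $N$-th power $\mathfrak{o}^N\in H^N(E^2_B;(\tilde H_0(F))^{\otimes N})$ of the homological obstruction is nonzero (the primary obstruction to sectioning the $N$-fold join being the only one for dimensional reasons), and then uses Corollary \ref{cor:29}(3), which says that $\Pi$ and $\hat q$ have the same homological obstruction; that makes the proof two lines. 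Your route avoids identifying the obstruction class explicitly, at the cost of a connectivity estimate.

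That estimate is where the gap lies. The homotopy fibre of the induced map $F_{(e,e')}^{\ast N}\to (S^0)^{\ast N}=S^{N-1}$ is \emph{not} the $N$-fold join of the homotopy fibres of $F_{(e,e')}\to S^0$: additivity of connectivity under joins applies to the fibres of the joined fibrations over $E^2_B$, not to homotopy fibres of a map of total spaces. Correctly: $F_{(e,e')}\simeq S^0\times\Omega S^d$, the collapse to $S^0$ is a $(d-1)$-equivalence, and the first reduced homology of $F_{(e,e')}^{\ast N}$ not coming from $S^{N-1}$ sits in degree $(N-1)+(d-1)=N+d-2$; since both spaces are simply connected (as $N\ge 4$), the map on joins is an $(N+d-2)$-equivalence and its homotopy fibre is exactly $(N+d-3)$-connected, not $(Nd-2)$-connected (already for $N=4$, $d=2$ it is $3$-connected but not $4$-connected, whereas your formula predicts $6$-connected). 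The argument is nevertheless salvageable: lifting a section over the $N$-dimensional complex $E^2_B$ requires the homotopy fibre to be $(N-1)$-connected, and $N+d-3\ge N-1$ precisely when $d\ge 2$ --- so your conclusion holds, but with zero margin at $d=2$, and it genuinely fails for $d=1$ (consistent with the paper handling circle bundles separately). You should replace the false bound by this computation.
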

 \begin{proof} 
By obstruction theory (see \cite{S} Theorem 1) the parametrised topological complexity $\TC[p:E\to B]$ and the sectional category $ \secat[\hat q: \widehat {E^2_B}\to E^2_B]$ are maximal and equal $n+2d$ if and only if the power of the homological obstruction 
$$\mathfrak o^{n+2d}\not=0\in H^{n+2d}(E^2_B;  (\tilde H_0(F))^{\otimes n+2d})$$
is nonzero. The result now follows from Corollary \ref{cor:29}.
 \end{proof}

Combining Lemma \ref{lm:21} and Theorem \ref{maxsecatmanifold}, we see that Theorem \ref{thm:sharp} follows once we show that the height of the class 
$\alpha=\vv_L+\vv_R$ is always less or equal than $n+2d-1$, i.e. 
\begin{eqnarray}\label{eq:35}
(\vv_L+\vv_R)^{n+2d}=0\in H^\ast(E^2_B;\Z_2). 
\end{eqnarray}
This is established in Theorem \ref{thm:33} below. 

For a rank $d+1$ vector bundle  $\xi$ over a CW-complex  $B$ we shall consider together with the Stiefel - Whitney classes $w_i=w_i(\xi)\in H^i(B;\Z_2)$ the dual Stiefel - Whitney classes $\bar w_i=\bar w_i(\xi)$, which are defined by the relation 
\begin{eqnarray}\label{eq:36}
w\cdot \bar w =1,
\end{eqnarray}
where $w=1+w_1+w_2+\dots$ and $\bar w = 1+\bar w_1+\bar w_2+\dots$ are the total Stiefel - Whitney and total dual Stiefel - Whitney classes. The equality (\ref{eq:36}) is understood in the ring $\prod_{i\ge 0} H^i(B;\Z_2)$ of formal power series.

For $i=0, 1, \dots, d+1$ let $Q_i(x)$ denote the following polynomial of degree $i$:
$$
Q_i(x)=x^i+w_1x^{i-1}+w_2x^{i-2}+\dots+ w_i, \quad Q_i \in H^\ast(B;\Z_2)[x].
$$
\begin{lemma}\label{lm:32}
For any $i>0$ in the ring $H^\ast(E^2_B;\Z_2)$ one has the following relations for the powers of the left and the right enhancements
\begin{eqnarray}\label{eq:37a}
\vv_L^{d+i}=\bar w_i Q_d(\vv_L) + \bar w_{i+1} Q_{d-1}(\vv_L) +\dots
\end{eqnarray}
and similarly 
\begin{eqnarray}\label{eq:38a}
\vv_R^{d+i}=\bar w_i Q_d(\vv_R) + \bar w_{i+1} Q_{d-1}(\vv_R) +\dots
\end{eqnarray}
\end{lemma}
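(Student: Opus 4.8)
The plan is to reinterpret the defining relation (\ref{eq:rel}) for $\vv_L$ (established in Lemma \ref{lm:coh1}(b)) as saying that in $H^\ast(E^2_B;\Z_2)$ the element $\vv_L$ satisfies the polynomial identity $Q_{d+1}(\vv_L)=0$, where $Q_{d+1}(x)=x^{d+1}+w_1x^d+\dots+w_{d+1}$. Indeed, rewriting (\ref{eq:rel}) gives $\vv_L^{d+1}+w_1\vv_L^d+\dots+w_{d+1}=0$, i.e. $Q_{d+1}(\vv_L)=0$, and likewise $Q_{d+1}(\vv_R)=0$. The whole lemma is then a purely formal computation about multiplying out power series, carried out inside the polynomial ring $H^\ast(B;\Z_2)[x]$ modulo the ideal generated by $Q_{d+1}(x)$, and then specialising $x$ to $\vv_L$ (resp.\ $\vv_R$).

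First I would record the telescoping identity among the $Q_i$: for $i\ge 1$ one has $Q_{i}(x)=x\cdot Q_{i-1}(x)+w_i$, with $Q_0(x)=1$; equivalently $x^i = Q_i(x) + w_1 Q_{i-1}(x) + \dots + w_i Q_0(x)$, which is just the statement that $\sum_{i\ge 0} Q_i(x) t^i \cdot w(t) = \sum_{i\ge 0} x^i t^i$ in the power-series sense, with $w(t)=1+w_1t+w_2t^2+\dots$. Next I would multiply the relation $Q_{d+1}(\vv_L)=0$ through by the dual total class. Working with generating functions in a formal variable $t$: consider the series $\Theta(t)=\sum_{i\ge 0}Q_{d+1+i}(\vv_L)\,t^i$ and observe that $w(t)\cdot\Theta(t)$ telescopes, by the identity above, to a polynomial of degree $\le d$ in $\vv_L$ whose coefficients are the $\vv_L^{d+1+i}$; more precisely $Q_{d+1+i}(\vv_L)=\vv_L^{i}Q_{d+1}(\vv_L)+\big(\text{lower }Q\text{'s in }\vv_L\text{ with coefficients }w_\bullet\big)$, and since $Q_{d+1}(\vv_L)=0$ this expresses $\vv_L^{d+i}$ (for $i\ge 1$) as a combination $\sum_{j} c_{j}\,Q_{d-j}(\vv_L)$ whose coefficients $c_j\in H^\ast(B;\Z_2)$ are, after collecting terms and using $w\cdot\bar w=1$ from (\ref{eq:36}), exactly $\bar w_i, \bar w_{i+1},\dots$. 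Verifying that the bookkeeping produces precisely the dual classes $\bar w_i$ shifted as in (\ref{eq:37a}) is the crux: it amounts to inverting $w(t)$ against the telescoped series, and the formula $w\cdot\bar w=1$ is exactly what makes the coefficients come out as the $\bar w_\bullet$. The identical argument with $\vv_L$ replaced by $\vv_R$ (again using Lemma \ref{lm:coh1}(b)) yields (\ref{eq:38a}).

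The main obstacle I anticipate is purely organisational: setting up the generating-function identity so that the coefficient-matching is transparent rather than a long unwound recursion. The cleanest route is probably to prove the single power-series identity
\begin{eqnarray*}
\sum_{i\ge 1}\vv_L^{d+i}\,t^i \;=\; \Big(\sum_{i\ge 1}\bar w_i\,t^i\Big)\cdot\Big(\sum_{j=0}^{d} Q_{d-j}(\vv_L)\,t^{j}\Big)
\end{eqnarray*}
in $H^\ast(B;\Z_2)[\vv_L][[t]]$, which is equivalent to (\ref{eq:37a}) coefficient by coefficient, and to derive it by multiplying the trivial identity $\sum_{i\ge 0}\vv_L^i t^i = w(t)\cdot\sum_{i\ge 0}Q_i(\vv_L)t^i$ by $\bar w(t)$, then using $Q_{d+1}(\vv_L)=0$ to truncate the right-hand sum at degree $d$. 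Everything else is a routine check that no denominators or undefined terms appear and that the coefficients of $t^1,\dots$ reproduce (\ref{eq:37a}) and (\ref{eq:38a}).
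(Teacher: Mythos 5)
Your strategy is the paper's: regard $\sum_k \vv_L^k$ as a formal series graded by cohomological degree, multiply by the total class $w$, observe that the relation $Q_{d+1}(\vv_L)=0$ truncates the product to $\sum_{m=0}^{d}Q_m(\vv_L)$, and multiply back by $\bar w$ using $w\cdot\bar w=1$. However, the concrete identities you propose in order to carry this out are wrong, and the errors sit exactly at the point you yourself flag as the crux. First, the ``equivalent'' form of the telescoping identity, $x^i=Q_i(x)+w_1Q_{i-1}(x)+\dots+w_iQ_0(x)$, is false: already for $i=2$ the right-hand side equals $x^2+w_1^2$. The correct inversion is $x^i=\sum_{j}\bar w_jQ_{i-j}(x)$. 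Likewise the ``trivial identity'' runs the wrong way: with $t$ tracking total degree one has $\sum_i Q_i(x)t^i=w(t)\cdot\sum_i x^it^i$, so it is $\bar w(t)$, not $w(t)$, that multiplies the $Q$-series to recover $\sum_i x^it^i$; your version has $w$ and $\bar w$ interchanged.

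Second, and more seriously, the displayed generating-function identity that you declare ``equivalent to (\ref{eq:37a}) coefficient by coefficient'' is not. On your right-hand side the coefficient of $t^i$ is $\sum_{j}\bar w_{i-j}\,Q_{d-j}(\vv_L)$ (the index of $\bar w$ decreases as that of $Q$ decreases, and the terms are not even of a single cohomological degree), whereas the lemma asserts $\vv_L^{d+i}=\sum_{j\ge 0}\bar w_{i+j}\,Q_{d-j}(\vv_L)$. Already at $i=1$ your identity would give $\vv_L^{d+1}=\bar w_1Q_d(\vv_L)$, which contradicts (\ref{eq:rel}): by the uniqueness in Lemma \ref{lm:coh1}(a), comparing coefficients of $\vv_L^{d-1}$ gives $w_2$ on one side and $w_1^2$ on the other. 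The repair is to let $t$ track total cohomological degree throughout: prove $\sum_{k\ge0}\vv_L^kt^k=\bar w(t)\cdot\sum_{m=0}^{d}Q_m(\vv_L)\,t^{m}$, which follows from the trivial identity together with $Q_{d+1+k}(\vv_L)=\vv_L^{k}Q_{d+1}(\vv_L)=0$, and then read off the coefficient of $t^{d+i}$. With that correction your argument becomes exactly the paper's proof of Lemma \ref{lm:32}.
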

\begin{proof}
Let ${\mathcal {V}}_L$ denote the class ${\mathcal {V}}_L=1+\vv_L +\vv_L^2+ \vv_L^3+\dots$ understood as an element of the ring of formal power series 
$$\mathcal R= \prod_{i\ge 0} H^\ast(E^2_B;\Z_2).$$ 
The ring $\prod_{i\ge 0} H^\ast(B;\Z_2)$ is naturally a subring of $\mathcal R$. Consider in $\mathcal R$ the product $w\cdot \VV_L$. For the $i$-th degree component of this product,
using the relation (\ref{eq:rel}),
we have
\begin{eqnarray}\label{eq:39a}
(w\cdot \VV_L)_i=\left\{
\begin{array}{lll}
Q_i(\vv_L)&\mbox{for}& i\le d,\\
0&\mbox{for}& i> d.
\end{array}
\right.
\end{eqnarray}
We see that the product $w\cdot \VV_L$ is a \lq\lq polynomial\rq\rq\,  of degree $d$.
Multiplying it by $\bar w$ in the ring 
$\mathcal R$ we obtain  
\begin{eqnarray}
\VV_L=\bar w\cdot (w\cdot \VV_L).
\end{eqnarray}
From this, taking into account (\ref{eq:39a}) we obtain (\ref{eq:37a}) and similarly (\ref{eq:38a}).
\end{proof}
%
%
%
%
%
%
Lemma \ref{lm:32} leads to the following Corollary:

\begin{corollary}\label{cor:33}
Let $m\ge 0$ be the maximal integer such that the dual Stiefel - Whitney class $\bar w_m=\bar w_m(\xi) \not=0\in H^m(B;\Z_2)$ is nonzero. Then the highest nonzero power of the class $\vv_L\in H^1(E^2_B;\Z_2)$ is $\vv_L^{m+d}$; moreover,
in the group $H^{m+d}(E^2_B;\Z_2)$ one has the equality
\begin{eqnarray}\label{eq:37}
\vv_L^{m+d}=\bar w_m\cdot \left[\vv_L^d+w_1\vv_L^{d-1}+w_2\vv_L^{d-2}+\dots+w_d\right].
\end{eqnarray}
Similarly, the highest nonzero power of the class $\vv_R$ is $\vv_R^{m+d}$, and
in the group $H^{m+d}(E^2_B;\Z_2)$ one has
\begin{eqnarray}\label{eq:38}
\vv_R^{m+d}=\bar w_m\cdot \left[\vv_R^d+w_1\vv_R^{d-1}+w_2\vv_R^{d-2}+\dots+w_d\right].
\end{eqnarray}
\end{corollary}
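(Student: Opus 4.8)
The plan is to read the statement off almost directly from Lemma \ref{lm:32}, using the maximality of $m$ to truncate the formal power series appearing there, and then to invoke the Leray--Hirsch freeness from Lemma \ref{lm:coh1}(a) to certify that the resulting class is genuinely nonzero (so that $m+d$ is really the height and not merely an upper bound for it).

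First I would specialise the relation (\ref{eq:37a}) to the index $i=m$. Its right-hand side is $\bar w_m Q_d(\vv_L)+\bar w_{m+1}Q_{d-1}(\vv_L)+\dots$, a finite sum whose $t$-th term ($t=0,1,\dots,d$) carries the coefficient $\bar w_{m+t}$. Every term with $t\ge 1$ therefore has a coefficient $\bar w_j$ with $j>m$, which vanishes by the choice of $m$, so $\vv_L^{m+d}=\bar w_m Q_d(\vv_L)$; expanding $Q_d(\vv_L)=\vv_L^d+w_1\vv_L^{d-1}+\dots+w_d$ gives precisely (\ref{eq:37}). Applying (\ref{eq:37a}) instead with any $i>m$ makes \emph{all} the coefficients $\bar w_i,\bar w_{i+1},\dots$ zero, hence $\vv_L^{d+i}=0$; this shows that no power of $\vv_L$ beyond $\vv_L^{m+d}$ survives.

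It remains to check that $\vv_L^{m+d}$ itself is nonzero. Here I would use Lemma \ref{lm:coh1}(a): the monomials $\vv_L^i\vv_R^j$ with $0\le i,j\le d$ form a basis of $H^\ast(E^2_B;\Z_2)$ as a free module over $H^\ast(B;\Z_2)$. The expression $\bar w_m Q_d(\vv_L)=\sum_{k=0}^{d}\bar w_m w_k\,\vv_L^{d-k}$ (with $w_0=1$) is already written in this canonical form: it contains no $\vv_R$, and the exponents of $\vv_L$ never exceed $d$. Its coefficient on the basis element $\vv_L^d$ equals $\bar w_m$, which is nonzero by hypothesis, so by freeness of the module the whole class is nonzero; therefore $\h(\vv_L)=m+d$. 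The argument for $\vv_R$ is word for word the same, using (\ref{eq:38a}) in place of (\ref{eq:37a}) and the basis element $\vv_R^d$ in place of $\vv_L^d$, and produces (\ref{eq:38}).

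I do not anticipate a real obstacle: the corollary is essentially a bookkeeping consequence of Lemma \ref{lm:32}. The one point that deserves care is the distinction between "all powers above $m+d$ vanish" (immediate from truncating the series) and "$\vv_L^{m+d}\neq 0$"; for the latter one must notice that formula (\ref{eq:37}) is \emph{already} in Leray--Hirsch normal form, so that its leading coefficient $\bar w_m$ may be extracted legitimately and non-vanishing follows from the module being free over $H^\ast(B;\Z_2)$.
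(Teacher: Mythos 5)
Your argument is correct and is exactly the route the paper intends: the paper derives Corollary \ref{cor:33} directly from Lemma \ref{lm:32} (it offers no further proof), namely by setting $i=m$ in (\ref{eq:37a}) and killing all terms with coefficients $\bar w_j$, $j>m$, and your appeal to the Leray--Hirsch basis of Lemma \ref{lm:coh1}(a) to extract the coefficient $\bar w_m$ on $\vv_L^d$ is the right way to make the non-vanishing of $\vv_L^{m+d}$ explicit. The only cosmetic remark is that Lemma \ref{lm:32} is stated for $i>0$, so the degenerate case $m=0$ (where $w=\bar w=1$ and the claim reduces to $\vv_L^d\neq 0$, $\vv_L^{d+1}=0$) should be noted separately, but this is immediate.
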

%

Now we are in position to proof the following result:

\begin{theorem}\label{thm:33}
Let $\xi$ be a vector bundle of rank $d+1\ge 3$ over a finite CW-complex $B$ of dimension $n=\dim B$. 
Let $p:E\to B$ be the bundle of real projective spaces $\RP^d$ obtained as projectivization of $\xi$. 
Let $\vv_L, \vv_R\in H^1(E^2_B;\Z_2)$ be the left and the right enhancements, see (\ref{eq:vlvr}). Then 
\begin{eqnarray}\label{eq:?}
(\vv_L+\vv_R)^{n+2d}=0.
\end{eqnarray}
In other words, $$\h(\vv_L+\vv_R)\le n+2d-1.$$
\end{theorem}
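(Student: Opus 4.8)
The plan is to expand $(\vv_L+\vv_R)^{n+2d}$ using the binomial theorem over $\Z_2$, obtaining a sum of terms $\vv_L^i\vv_R^j$ with $i+j=n+2d$. By Lemma~\ref{lm:coh1}(a), the monomials $\vv_L^i\vv_R^j$ with $i,j\le d$ form a basis of $H^\ast(E^2_B;\Z_2)$ as a module over $H^\ast(B;\Z_2)$, so the first task is to rewrite every term in that normal form using Corollary~\ref{cor:33}. Specifically, let $m$ be the largest index with $\bar w_m\neq 0$; since $B$ has dimension $n$ we have $m\le n$, and Corollary~\ref{cor:33} tells us $\vv_L^{m+d}$ (hence any higher power) lies in the ideal generated by $\bar w_m$, while the relation (\ref{eq:rel}) lets us reduce any $\vv_L^{d+k}$ with $1\le k\le m$ to a combination of $Q_d(\vv_L),\dots$ with coefficients among $\bar w_k,\bar w_{k+1},\dots$. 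The key quantitative point is that in a term $\vv_L^i\vv_R^j$ with $i+j=n+2d$, if both $i>d$ and $j>d$ then writing $i=d+k$, $j=d+\ell$ we get $k+\ell = n$, and applying Lemma~\ref{lm:32} to both factors produces coefficients $\bar w_a\bar w_b$ with $a\ge k$, $b\ge \ell$, so $a+b\ge n = \dim B +\dots$; the careful bookkeeping shows such products either vanish for degree reasons or, more precisely, the total coefficient degree forces everything into degree $>n$ on the base, hence zero.

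More carefully, here is the order of steps I would carry out. First I would dispose of the terms where one exponent is $\le d$: if $i\le d$ then $j=n+2d-i\ge n+d$, and by Lemma~\ref{lm:32} (applied with $d+i' = j$, i.e. $i' = n+2d-i-d = n+d-i \ge n$) the power $\vv_R^j$ equals $\bar w_{n+d-i}Q_d(\vv_R)+\bar w_{n+d-i+1}Q_{d-1}(\vv_R)+\cdots$; but $\bar w_s = 0$ for $s>n$ since $B$ has dimension $n$, and here the smallest index is $n+d-i \ge n$, with the only surviving possibility $i=d$ giving index exactly $n$. So all these terms, after multiplying by $\vv_L^i$, land in $\bar w_n\cdot H^\ast$, which has degree $\ge n$ and sits in the top of $H^\ast(B)$ — I would track precisely which monomials $\vv_L^i\vv_R^{j'}$ with $j'\le d$ appear and with which $H^n(B)$-coefficients. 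Second, for the bulk terms with both $i,j>d$, I would substitute the Lemma~\ref{lm:32} expansions for both $\vv_L^i$ and $\vv_R^j$, multiply out, and observe every resulting monomial carries a coefficient $\bar w_a\bar w_b$ with $a+b \ge (i-d)+(j-d) = n$. Third, I would assemble all contributions to each fixed normal-form monomial $\vv_L^r\vv_R^s$ ($r,s\le d$) and show the aggregate $H^\ast(B;\Z_2)$-coefficient vanishes — this is where the $\Z_2$-binomial identities (sums of binomial coefficients mod $2$ telescoping) and the relation $w\cdot\bar w = 1$ do the real work, presumably collapsing each coefficient to $0$ or to something in $H^{>n}(B)=0$.

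The main obstacle I anticipate is the combinatorial heart of step three: showing that after regrouping, the coefficient of each basis monomial $\vv_L^r\vv_R^s$ actually cancels. The binomial coefficients $\binom{n+2d}{i}\bmod 2$ do not simply vanish, so the cancellation must come from pairing the substitution of $\vv_L^i$ against $\vv_L^{i'}$-reductions and exploiting $\sum_i \binom{n+2d}{i} = 0 \bmod 2$-type identities together with the structure of the $Q_j$ polynomials. A cleaner route, which I would try first, is to avoid brute expansion entirely: work in the formal power series ring $\mathcal R = \prod_{i\ge 0}H^\ast(E^2_B;\Z_2)$, set $\VV_L = \sum \vv_L^i$ and $\VV_R=\sum\vv_R^j$ as in the proof of Lemma~\ref{lm:32}, note $w\cdot\VV_L$ and $w\cdot\VV_R$ are genuine polynomials of degree $d$ in $\vv_L$, $\vv_R$ respectively, and then analyze the generating function $\sum_k (\vv_L+\vv_R)^k$ or rather the single element $(\vv_L+\vv_R)^{n+2d}$ via the identity $\vv_L^i\vv_R^j = $ (coefficient extraction from $\VV_L\cdot\VV_R$). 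Multiplying $\VV_L\VV_R$ by $w^2 = \bar w^{-2}$ turns it into a polynomial of bidegree $(d,d)$ in $(\vv_L,\vv_R)$ with coefficients in $H^\ast(B)$, and the claim $(\vv_L+\vv_R)^{n+2d}=0$ should reduce to: in the expansion of $\bar w^2$ times that polynomial, no term of total degree $n+2d$ survives, which holds because the polynomial part has degree $\le 2d$ in the $\vv$'s and $\bar w^2$ contributes only degrees $\le 2n$... and the degree-$(n+2d)$ slice requires the $\bar w$-part to contribute degree $\ge n$, landing in $H^{\ge n}(B)$ — then one shows it lands in $H^{>n}(B)=0$ by a parity/degree count. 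I would pursue this generating-function formulation, since it sidesteps the messy binomial bookkeeping and makes the role of $\dim B = n$ transparent.
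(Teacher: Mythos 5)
Your setup coincides with the paper's: expand binomially, use Lemma~\ref{lm:32} and Corollary~\ref{cor:33} to put every term $\vv_L^i\vv_R^j$ ($i+j=n+2d$) into normal form, and observe that the surviving coefficients are products $\bar w_a\bar w_b$ with $a+b\ge n$, so only $a+b=n$ matters and everything collapses onto the single monomial $\vv_L^d\vv_R^d$ with a coefficient in $H^n(B;\Z_2)$. But you explicitly leave unresolved what you correctly identify as ``the combinatorial heart of step three,'' and that is precisely where the proof lives. Neither of your two suggested mechanisms closes it: a ``$\sum_i\binom{n+2d}{i}=0\bmod 2$'' identity is not what is used, and the degree count at the end of your generating-function variant cannot work as stated, because the surviving contribution sits in $H^{n}(B;\Z_2)$ (not $H^{>n}$) and is in general a nonzero group --- it vanishes for a combinatorial reason, not a degree reason.

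The missing idea is a \emph{symmetric pairing}. After reduction one gets
$$(\vv_L+\vv_R)^{n+2d}=\Bigl(\sum_{j}\binom{n+2d}{d+j}\,\bar w_j\,\bar w_{n-j}\Bigr)\cdot\vv_L^d\vv_R^d,$$
where $j$ ranges over the indices for which both powers are nonzero (this includes the boundary terms $\vv_L^d\vv_R^{n+d}$ and $\vv_L^{n+d}\vv_R^d$ you set aside, which contribute $j=0$ and $j=n$ when $\bar w_n\neq 0$). Since $\binom{n+2d}{d+j}=\binom{n+2d}{d+(n-j)}$ and $\bar w_j\bar w_{n-j}=\bar w_{n-j}\bar w_j$, the terms indexed by $j$ and $n-j$ are identical and cancel in $\Z_2$; when $n=2k$ is even the self-paired central term carries the coefficient $\binom{n+2d}{d+k}=\binom{2(k+d)}{k+d}$, a central binomial coefficient, which is always even. (The paper also separates off the easy subcase $2m<n$, where $m$ is the top nonzero $\bar w_m$, since there every product already vanishes by Corollary~\ref{cor:33}.) Without this pairing-plus-central-term argument the proof is incomplete, so you should supply it rather than defer it.
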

\begin{proof}
Let $m$ denote the highest dimension in which $\bar w_m(\xi)\not=0\in H^m(B;\Z_2)$. Clearly, $m\le n$. 

Assume first that $m=n$. Then 
\begin{eqnarray}\label{eq:41}
(\vv_L+\vv_R)^{m+2d} =\sum_{i=0}^m \binom {m+2d}{d+i} \vv_L^{d+i}\vv_R^{m+d-i}.
\end{eqnarray}
Using (\ref{eq:38}) we see that 
$$\vv_L^d\vv_R^{m+d}=\vv_L^d \bar w_m \left[\vv_R^{d} +w_1\vv_R^{d-1}+\dots\right]=\bar w_m\vv_L^d\vv_R^d$$
and similarly 
$\vv_L^{m+d}\vv_R^{d}=w_m\vv_L^d\vv_R^d.$
Hence the terms with $i=0$ and $i=m$ in the sum (\ref{eq:41}) are equal and they cancel each other. 
If $1\le i\le m-1$ then the product $\vv_L^{d+i}\vv_R^{m+d-i}$ equals $\bar w_i\bar w_{m-i} \vv_L^d\vv_R^d$ as follows from Lemma \ref{lm:32}. 
 Thus the terms with the indexes $i$ and $m-i$ cancel each other. If $m=2k$ is even then the central term appears with the coefficient 
 $\binom {m+2d}{d+k}$ which is always even. 

Consider now the case $m<n$.  
If $2m<n$ then in each product of the form $\vv_L^i\vv_R^{n+2d-j}$ one has either $j>m+d$ or $n+2d-j>m+d$ and hence the product vanishes due to Corollary \ref{cor:33}.
Finally we shall assume that $m<n\le 2m$. Then 
\begin{eqnarray}\label{eq:41b}
(\vv_L+\vv_R)^{n+2d} =\sum_{j=n-m}^{m} \binom {n+2d}{d+j} \vv_L^{d+j}\vv_R^{n+d-j}.
\end{eqnarray}
In this sum the lower limit $j=n-m$ does not exceed the upper limit $j=m$.  
Using Lemma \ref{lm:32} we have 
$
\vv_L^{d+j}\vv_R^{n+d-j} = \bar w_j \bar w_{n-j} \vv_L^d\vv_R^d
$
and therefore 
$$
(\vv_L+\vv_R)^{n+2d} = \left(
\sum_{j=n-m}^{m} \binom {n+2d}{d+j}\bar w_j \bar w_{n-j}
\right)\cdot \vv_L^d\vv_R^d.
$$
Since $\binom{n+2d}{d+j} = \binom{n+2d}{d+(n-j)}$, the terms of this sum with $j$ and $n-j$ cancel each other. If $n=2k$ is even and $j=k$ then $j=n-j$ and the binomial coefficient $\binom{n+2d}{d+k}$ is even, i.e. it vanishes modulo 2. 
This completes the proof.
\end{proof}

\section{Examples}\label{sec:6}

\subsection{} Let $B=\RP^2\times \RP^2\times \RP^2$ and let $a,b,c\in H^1(B;\Z_2)$ denote the generators of the corresponding factors.  Let $\xi$ be the vector bundle of rank $3$ obtained as the Whitney sum of the canonical lines bundles associated to $a$, $b$ and $c$. We calculate below the parametrized topological complexity 
of the bundle of real projective spaces $p: E\to B$ with fibre $\RP^2$ over $B$ using the results of the previous sections. 

We note that the total Stiefel - Whitney class of $\xi$ is given by
$$
w= (1+a)(1+b)(1+c)=1+w_1+w_2+w_3\, \in H^\ast(B;\Z_2)
$$
and the dual Stiefel - Whitney class is 
$$
\bar w= (1+a+a^2)(1+b+b^2)(1+c+c^2) \, \in H^\ast(B;\Z_2).
$$
The highest nontrivial class $\bar w_m$ is $\bar w_6=(abc)^2\in H^6(B;\Z_2)$. 

Let $E^2_B\subset E^2$ be the space of pairs of lines lying in the same fibre, as described in Lemma \ref{lm:coh1}. 
Let $\vv_L, \vv_R\in H^1(E^2_B;\Z_2)$ be the left and the right enhancements. Applying Corollary \ref{cor:33} we find
\begin{eqnarray}\label{eq:41d}
\vv_L^8=(abc)^2\vv_L^2, \quad\mbox{and}\quad \vv_R^8=(abc)^2\vv_R^2
\end{eqnarray}
and $\vv_L^9=0=\vv_R^9$, i.e. $\h(\vv_L)=\h(\vv_R)=8.$ Thus, we obtain
$$
(\vv_L+\vv_R)^8= \vv_L^8+\vv_R^8= (abc)^2 (\vv_L^2+\vv_R^2)\not=0
$$
and 
$$
(\vv_L+\vv_R)^9=(abc)^2 (\vv_L^2+\vv_R^2)(\vv_L+\vv_R)= (abc)^2 (\vv_L^2\vv_R+\vv_R^2 \vv_L)\not=0
$$
is nonzero, as one sees by applying Lemma \ref{lm:coh1}.
On the other hand, using (\ref{eq:41d}) we find
$$
(\vv_L+\vv_R)^{10} = (\vv_L^8+\vv_R^8)\cdot (\vv_L^2+\vv_R^2)=(abc)^2\cdot (\vv_L^4+\vv_R^4)=0.
$$
The vanishing of the class $(abc)^2\cdot (\vv_L^4+\vv_R^4)\in H^{10}(E^2_B;\Z_2)$ can be seen using Lemma \ref{lm:32}; indeed, the class $(abc)^2\in H^6(B;\Z_2)$ is a top degree class of $B$ and the expansions of $\vv_L^4$ and $\vv_R^4$ involve classes of the base of positive degree. 

As $\dim B=6$ and $d=2$, we may apply Corollary \ref{cor:13} combined with Theorem \ref{thm:sharp} to conclude that $\TC[p:E\to B]=9$. 

\subsection{} Consider the circle $B=S^1$ and the vector bundle $\xi$ over $B$ of rank $d+1$ where 
$\xi=\eta\oplus \epsilon \oplus \epsilon \dots\oplus \epsilon$ where $\eta$ is a non-orientable line bundle and $\epsilon$ is the trivial line bundle. 
We shall assume that $d\ge 2$. 
We are interested in the projectivization $p: E\to B$ of $\xi$ which is a locally trivial bundle with fibre $\RP^d$ over the circle. 
The total Stiefel - Whitney class $w=w(\xi)=1+\beta$, where $\beta\in H^1(B;\Z_2)$ is the generator. The relations (\ref{eq:rel}) for the left and right enhancements in this case read
\begin{eqnarray}
\vv_L^{d+1}=\beta \vv_L^d, \quad \vv_R^{d+1}=\beta \vv_R^d
\end{eqnarray}
and $\vv_L^{d+2}=0=\vv_R^{d+2}.$
The upper bound of Theorem \ref{thm:sharp} gives 
\begin{eqnarray}
\TC[p: E\to B] \le 2d,
\end{eqnarray}
and to match it with the lower bound of Corollary \ref{cor:13} we need to have $(\vv_L+\vv_R)^{2d}\not=0$. 
The binomial expansion of $(\vv_L+\vv_R)^{2d}$ contains the terms 
$$
\binom{2d}{d-1}\vv_L^{d-1}\vv_R^{d+1} +\binom{2d}{d+1}\vv_L^{d+1}\vv_R^{d-1}= \binom{2d}{d-1}\beta\left[\vv_L^{d-1}\vv_R^d+\vv_L^d\vv_R^{d-1}\right].
$$
We see that the parametrized topological complexity $\TC[p: E\to B]$ is maximal and equals $2d$ if the binomial coefficient $\binom{2d}{d-1}$ is odd. 

In general, if $a$ and $b$ are two integers with their binary expansions  $a=a_1a_2\dots a_k$ and $b=b_1b_2\dots b_{k}$, where $a_i, b_i\in\{0,1\}$,
then $$\binom a b=\prod_{i=1}^k \binom {a_i}{b_i}\mod 2.$$ Here one understands that $\binom 0 0=\binom 1 1=\binom 1 0=1$ and $\binom 0 1=0$.
Thus, the binomial coefficient $\binom a b$ is odd iff $a_i=0$ implies $b_i=0$, i.e. if the binary expansion of $b$ has zeros on all positions where the expansion of $a$ has zeros. 

Using the above general criterion, it is easy to see that the binomial coefficient $\binom{2d}{d-1}$ is odd when $d+1$ is a power of 2. 
This leads to the following Corollary: 
\begin{corollary}
If $d+1$ is a power of 2 then one has $\TC[p:E\to B]=2d$  in the above example. 
\end{corollary}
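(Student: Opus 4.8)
The plan is to combine the lower bound of Corollary \ref{cor:13} with the upper bound of Theorem \ref{thm:sharp}, so that the whole statement reduces to verifying that $(\vv_L+\vv_R)^{2d}\neq 0$ in $H^\ast(E^2_B;\Z_2)$ when $d+1$ is a power of $2$. The upper bound $\TC[p:E\to B]\le n+2d-1 = 2d$ is immediate from Theorem \ref{thm:sharp} since here $n=\dim B = 1$. For the matching lower bound, the computation already carried out in the body of the example shows that the binomial expansion of $(\vv_L+\vv_R)^{2d}$ collapses (using the relations $\vv_L^{d+1}=\beta\vv_L^d$, $\vv_R^{d+1}=\beta\vv_R^d$, $\vv_L^{d+2}=\vv_R^{d+2}=0$) to the single surviving pair of terms $\binom{2d}{d-1}\beta\bigl[\vv_L^{d-1}\vv_R^d+\vv_L^d\vv_R^{d-1}\bigr]$, which is a nonzero element of $H^\ast(E^2_B;\Z_2)$ by Lemma \ref{lm:coh1} whenever the coefficient $\binom{2d}{d-1}$ is odd.

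So the one genuine computation is the claim that $\binom{2d}{d-1}$ is odd when $d+1=2^r$. I would argue via Lucas' theorem, exactly in the form recalled in the example: $\binom{a}{b}$ is odd iff the binary digits of $b$ lie below those of $a$. Write $d+1=2^r$, so $d = 2^r-1$ has binary expansion consisting of $r$ ones, and $2d = 2^{r+1}-2 = \underbrace{1\cdots1}_{r}0$ in binary. On the other hand $d-1 = 2^r-2 = \underbrace{1\cdots1}_{r-1}0$ in binary, which has $r-1$ ones followed by a zero. Comparing positions: the digit string of $d-1$ has a $1$ only in positions $1,\dots,r-1$ (counting from the units place, which is position $0$), and in each of those positions $2d$ also has a $1$; position $0$ of $d-1$ is $0$, so no constraint; the highest position $r$ of $2d$ is a $1$ but $d-1$ has a $0$ there, again no obstruction. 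Hence every digit of $d-1$ is dominated by the corresponding digit of $2d$, so $\binom{2d}{d-1}$ is odd.

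The one point that needs a small sanity check rather than being truly an obstacle is the hypothesis $d\ge2$ under which the preceding relations in the example (and hence Corollary \ref{cor:13}, Theorem \ref{thm:sharp}, Theorem \ref{thm:33}) were derived: this is automatic once $d+1$ is a power of $2$ and $d\neq 1$, i.e. $d+1 = 2^r$ with $r\ge 2$; the degenerate case $d+1=2$ is the circle-bundle case already treated by Corollary \ref{cor:9}, so it can simply be excluded or noted separately. Assembling these pieces: $2d = n+2d-1 \ge \TC[p:E\to B] \ge \h(\vv_L+\vv_R) \ge 2d$, forcing equality, which is the assertion of the Corollary.
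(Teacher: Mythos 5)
Your proposal is correct and follows essentially the same route as the paper: upper bound $n+2d-1=2d$ from Theorem \ref{thm:sharp}, lower bound from Corollary \ref{cor:13} via the surviving terms $\binom{2d}{d-1}\beta\bigl[\vv_L^{d-1}\vv_R^{d}+\vv_L^{d}\vv_R^{d-1}\bigr]$, and the parity of $\binom{2d}{d-1}$ checked by the Lucas criterion (which the paper leaves as "easy to see" and you carry out explicitly and correctly). Your side remarks — that the middle term dies because $\binom{2d}{d}$ is even, and that $d=1$ is excluded by the standing hypothesis $d\ge 2$ but is covered separately by the circle-bundle results — are accurate and only make the argument more complete.
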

In particular, one may take $d=15$ and get $\TC[p:E\to B]=30$ for the associated bundle of real projective spaces $\RP^{15}$ over the circle. 
It is interesting to compare this result with fact that $\TC(\RP^{15})= 22$, see (\ref{eq:rp15}). 
This example illustrates a surprising phenomenon that, even in the case of a one-dimensional base, the parametrized topological complexity may significantly exceed the usual topological complexity.

\begin{appendix}\label{app:1}

\section{Circle bundles and vector bundles of rank 2}\label{sec:app1}

The proof of the following statement uses arguments of \cite{Ghys}, \S 4. 

\begin{lemma}\label{lm:13}
If $B$ is a finite CW complex, then every locally trivial bundle $p:E\to B$ with fibre the circle is isomorphic to the unit sphere bundle of a rank 2 vector bundle  
$\xi$ over $B$. Moreover, such bundle $\xi$ is unique up to isomorphism. 
\end{lemma}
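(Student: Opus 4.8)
The plan is to separate the statement into an existence part and a uniqueness part, and in both cases to exploit the identification of circle bundles with $SO(2)$- or $O(2)$-structures as in the proof of Lemma \ref{lm:6a}, together with the dichotomy supplied by the orientation class $w(p)\in H^1(B;\Z_2)$.

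For existence, I would first dispose of the orientable case: if $w(p)=0$, then by Lemma \ref{lm:6a} the bundle $p:E\to B$ is the unit circle bundle of a complex line bundle, hence of the underlying rank $2$ real vector bundle, and we are done. So assume $w(p)\neq 0$. The key point, taken from \cite{Ghys}, \S 4, is that any locally trivial circle bundle over a finite CW complex admits structure group $O(2)$ — that is, it is fibrewise homeomorphic (in fact isomorphic as a bundle) to the unit sphere bundle of a rank $2$ vector bundle. Concretely, one replaces the transition functions $g_{ij}:U_i\cap U_j\to {\rm Homeo}(S^1)$ by homotopic maps landing in $O(2)\subset {\rm Homeo}(S^1)$; this is possible because the inclusion $O(2)\hookrightarrow {\rm Homeo}^+(S^1)\rtimes\Z_2={\rm Homeo}(S^1)$ is a homotopy equivalence (${\rm Homeo}^+(S^1)\simeq SO(2)$ by the classical fact that $\mathrm{Diff}^+(S^1)$, and indeed $\mathrm{Homeo}^+(S^1)$, deformation retracts onto the rotation subgroup), and transition cocycles may be adjusted up to homotopy over a finite CW base by obstruction theory / the homotopy classification of bundles. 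The adjusted cocycle then defines the desired rank $2$ vector bundle $\xi$ whose unit sphere bundle is $p:E\to B$.

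For uniqueness, suppose $\xi,\xi'$ are rank $2$ vector bundles with isomorphic unit sphere bundles. The isomorphism of sphere bundles induces an isomorphism of the associated line bundles $\mathcal H,\mathcal H'$ of first homology of the fibres, so $w_1(\xi)=w(p)=w_1(\xi')$. If $w(p)=0$, both are orientable, hence $SO(2)$-bundles, i.e.\ complex line bundles, and a complex line bundle is determined by its Euler class (equivalently its first Chern class), which is read off from the sphere bundle; so $\xi\cong\xi'$. If $w(p)\neq 0$, the structure group is genuinely $O(2)$; since $O(2)$ acts on $\R^2$ effectively and the action on $S^1$ is also effective, the map $BO(2)\to B\mathrm{Homeo}(S^1)$ classifying "pass to the sphere bundle" is a homotopy equivalence by the group-level homotopy equivalence $O(2)\simeq {\rm Homeo}(S^1)$ invoked above, so the sphere bundle determines the $O(2)$-bundle up to isomorphism, hence determines $\xi$ up to isomorphism.

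The main obstacle is the careful justification of the structure-group reduction ${\rm Homeo}(S^1)\simeq O(2)$ and the attendant claim that this homotopy equivalence descends to a bijection on isomorphism classes of bundles over a finite CW complex $B$ (both for existence and, in the $w(p)\neq 0$ case, for uniqueness). This is exactly the content extracted from \cite{Ghys}, \S 4, and the honest way to present it is to recall that $\mathrm{Homeo}^+(S^1)$ deformation retracts onto $SO(2)$ (a theorem going back to work on homeomorphism groups of the circle), assemble this with the $\Z_2$ of orientation reversal to get ${\rm Homeo}(S^1)\simeq O(2)$, and then quote that a homotopy equivalence of topological groups induces a homotopy equivalence of classifying spaces and hence a bijection $[B,BO(2)]\xrightarrow{\ \cong\ }[B,B{\rm Homeo}(S^1)]$, which is precisely existence-plus-uniqueness. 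A secondary technical point to handle with some care is that a vector bundle isomorphism is slightly more data than a sphere bundle isomorphism a priori, but over each $U_i$ a sphere-bundle trivialisation cones off to a vector-bundle trivialisation, so no information is lost once the structure group has been reduced to $O(2)$.
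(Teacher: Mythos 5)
Your proposal is correct in outline and, at the conceptual level, follows the same strategy as the paper: everything reduces to showing that the map $BO(2)\to B{\rm Homeo}(S^1)$ induces a bijection on homotopy classes of maps out of a finite CW complex $B$, which simultaneously gives existence (reduction of the structure group to $O(2)$, hence a rank~2 bundle whose sphere bundle is $p$) and uniqueness. The difference lies entirely in how that central fact is justified, and it is worth noting that the point you treat as a quotable standard fact is exactly where the paper declines to take a shortcut. You invoke the principle that a homomorphism of topological groups which is a homotopy equivalence induces an equivalence of classifying spaces; the paper explicitly remarks that Proposition 4.2 of \cite{Ghys} (the homotopy equivalence $SO(2)\simeq H_+(S^1)$) ``is not enough for our purposes'' and instead constructs an explicit $O(2)$-equivariant homeomorphism $H(S^1)\cong O(2)\times F$, where $F$ is a convex (hence contractible) space of periodic functions. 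This single construction does two jobs at once: it shows that $O(2)$ is an \emph{admissible} subgroup of $H(S^1)$ in the sense of \cite{Mitchell} (i.e.\ $H(S^1)\to H(S^1)/O(2)$ is a principal $O(2)$-bundle), so that $BO(2)\to BH(S^1)$ is genuinely a fibration with fibre $H(S^1)/O(2)$, and it identifies that fibre with the contractible space $F$. Your route instead needs the bar-construction argument that $B$ of a (weak) homotopy equivalence of well-pointed topological groups is a weak equivalence, plus the extension of Ghys's statement from $H_+(S^1)$ to the full group $H(S^1)$ compatibly with the two components; these are true but are precisely the technical debts the paper's equivariant splitting is designed to discharge. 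Two smaller remarks: your case split on $w(p)$ is unnecessary, since the classifying-space argument handles orientable and non-orientable bundles uniformly; and the ``adjust the transition cocycle by obstruction theory'' step in your existence argument is not an independent argument but a restatement of the lifting property of Lemma \ref{lm:14}, so it cannot be used to avoid establishing that $BO(2)\to BH(S^1)$ is an equivalence (or a fibration with contractible fibre) in the first place.
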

\begin{proof}
Let $H(S^1)=H_+(S^1)\sqcup H_-(S^1)$ be the group of homeomorphisms of the circle $S^1$ with $H_\pm(S^1)\subset H(S^1)$ denoting the subgroups of homeomorphisms preserving (reversing) the orientation. Clearly, one has $O(2)\subset H(S^1)$ and 
$SO(2)\subset H_+(S^1)$.  

\begin{lemma} \label{lm:14} 
The induced map of classifying spaces
\begin{eqnarray}
BO(2) \to BH(S^1) 
\end{eqnarray}
is a fibration with contractible fibre.  Therefore, for any finite CW complex $B$ any continuous map $B\to BH(S^1)$ admits a unique up to homotopy lift to a map 
$B\to BO(2)$. 
\end{lemma}

Clearly Lemma \ref{lm:14} implies Lemma \ref{lm:13}. 

Proposition 4.2 from \cite{Ghys} states that the inclusion $SO(2)\to H_+(S^1)$ is a homotopy equivalence, but this fact is not enough for our purposes. 

We show below (using the method of \cite{Ghys})
that 
the group $H(S^1)$, viewed as a left $O(2)$-space, is the total space of a principal $O(2)$-bundle with contractible orbit space $H(S^1)/O(2)$ and 
therefore the canonical map $BO(2) \to BH(S^1)$ is a fibration with contractible fibre $H(S^1)/O(2)$. 

Let ${\mathbf e}: \R\to S^1$ denote the universal covering of the circle, ${\mathbf e}(t)= \exp(2\pi i t)$, where $t\in \R$. 
Any homeomorphism $f: S^1\to S^1$ admits a lift $\tilde f$ into the universal covers
$$
\xymatrix{
\R\ar[r]^{\tilde f}\ar[d]_\e&\R\ar[d]^\e\\
S^1\ar[r]_f& S^1
}
$$
and $\tilde f$ is unique, up to addition of an integer. Explicitly, $f(\exp(2\pi i t))=\exp(2\pi i \tilde f(t))$.
The lift $\tilde f:\R\to \R$ satisfies either $\tilde f(t+1)=\tilde f(t)+ 1$, or $\tilde f(t+1)=\tilde f(t)-1$, for all $t\in \R$, 
depending on whether $f$ preserves or reverses the orientation of the circle; it follows that the lift $\tilde f$ of every homeomorphism $f:S^1\to S^1$ is a homeomorphism $\tilde f: \R\to \R$, it is monotone increasing iff $f$ preserves the orientation and it is monotone decreasing otherwise. 

The lift $\tilde R_c$ of a rotation $R_c\in SO(2)$, where $R_c(e^{2\pi i t})=e^{2\pi i (t+c)}$, has the form $\tilde R(t)=t+c$, where $c\in \R$ is a constant. 
The lift of a reflection $Q_c\in O(2)$, where $Q_c(e^{2\pi i t})=e^{2\pi i (-t+c)}$, has the form $\tilde Q_c(t) = -t+c$. Note the formulae
\begin{eqnarray}\label{eq:formulae}
Q_{c'}\circ R_c= Q_{c'-c}= R_{-c}\circ Q_{c'} \quad \mbox{and}\quad Q_{c'}\circ Q_c=R_{c'-c}.
\end{eqnarray}

Let $F\subset C(\R)$ denote the space of all continuous function $g:\R\to \R$ having the following properties:
\begin{enumerate}
\item[{(a)}] $g(t+1)=g(t)$;
\item[{(b)}] $\int_0^1g(t)dt=0; $
\item[{(c)}] for any $t<t'$ one has $\frac{g(t')-g(t)}{t'-t}>-1$. 
\end{enumerate}
The space $F$ is convex and hence is contractible. 

We claim that {\it there exists an $O(2)$-equivariant homeomorphism} 
\begin{eqnarray}\label{eq:eq}
\psi: H(S^1) \to O(2)\times F.
\end{eqnarray}\label{eq:split}
For $f\in H_+(S^1)$ the lift $\tilde f$ can be written uniquely in the form 
\begin{eqnarray}
\tilde f(t)=t+c+g(t),\quad t\in \R,
\end{eqnarray} 
where $c\in \R$ is a constant (defined up to adding an integer)
and $g\in F$. The condition (c) is equivalent for $\tilde f$ to be monotone. The map (\ref{eq:eq}) sends $f\in H_+(S^1)$ to the pair 
$$\psi(f) =(R_c, g)\in O(2)\times F,$$ 
where 
$R_c\in SO(2)$ is the rotation by the angle $2\pi  c$. If $f\in H_-(S^1)$ is a homeomorphism reversing the orientation then the lift $\tilde f$ can be uniquely written in the form 
\begin{eqnarray}
\tilde f(t)=-t-c-g(t),\quad t\in \R,
\end{eqnarray} 
where $g\in F$, and we define 
$$
\psi(f) =(Q_c, g)\in O(2)\times F,
$$ 
where $Q_c$ is the reflection described above. 

It is obvious that $\psi$ is a homeomorphism and using formulae (\ref{eq:formulae}) one checks that $\psi$ is $O(2)$-equivariant, i.e. if $\psi(f) =(T, g)$, where $T\in O(2)$ and $g\in F$, then 
$$\psi(S\circ f) = (S\circ T, g)\quad\mbox{ for every}\quad  S\in O(2).$$ 
Thus we see that the subgroup $O(2)\subset H(S^1)$ is admissible, i.e. the quotient map $H(S^1)\to H(S^1)/O(2)$ is a principal bundle (see p. 5 of \cite{Mitchell}) and fibre of the canonical fibration $BO(2)\to BH(S^1)$ is homeomorphic to $H(S^1)/O(2)= F$
hence is contractible. This completes the proof of Lemma \ref{lm:13}.
\end{proof}

%
%
%
%
%
%
%
%

\section{Sectional category of 2-fold covering}\label{app:2}

Let $\phi:\tilde Y\to Y$ be a 2-fold cover of a finite CW-complex $Y$, where $\tilde Y$ and $Y$ are connected. Let 
$\alpha \in H^1(Y;\Z_2)$, \, $\alpha\not=0$, be the characteristic class of $\phi$.  This means that 
the kernel of the homomorphism $\pi_1(Y)\to \Z_2$ determined by $\alpha$ coincides with the subgroup $\pi_1(\tilde Y)\subset \pi_1(Y)$. 
For the sectional category of the covering $\phi: \tilde Y\to Y$ one has
\begin{eqnarray}\label{standardbounds}
\h(\alpha)\le \secat[\phi:\tilde Y\to Y]={\mathfrak {genus}}(\alpha) \le \dim Y.
\end{eqnarray}	
The left inequality is the standard cohomological lower bound using the fact that $\phi^\ast(\alpha)=0$. 	

Lemma \ref{doublecover} and Lemma \ref{doublecovera} stated below give stronger upper bounds for the sectional category and for the genus of $\alpha$, compared to (\ref{standardbounds}). Roughly, these statements claim that the sectional category 
$\secat[\phi:\tilde Y\to Y]$ attains the upper bound $n=\dim Y$ only when $\h(\alpha)=n$, i.e. $\alpha^n\not=0$. 

\begin{lemma}\label{doublecover} Let $Y$ be a finite cell complex of even dimension $n\ge 2$, 
and let $\alpha\in H^1(Y;\Z_2)$ be a nonzero cohomology class. Assume that the top integral cohomology group $H^n(Y;\Z)$ 
has no direct summands of type $\Z_{2^k}$ with $k>1$. If $\h(\alpha)\le n-1$, i.e. $\alpha^n=0$, then 
\begin{eqnarray}
{\mathfrak {genus}}(\alpha) = \secat[\phi:\tilde Y\to Y] \, \le n-1.
\end{eqnarray}
Here $\phi: \tilde Y\to Y$ denotes the 2-fold covering corresponding to the class $\alpha$. 
\end{lemma}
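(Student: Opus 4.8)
The plan is to reduce the statement to a question about the classifying map and obstruction theory for sections of a double cover. The class $\alpha$ is classified by a map $f:Y\to \RP^\infty = K(\Z_2,1)$, and $\secat[\phi]={\mathfrak{genus}}(\alpha)$ is the smallest $k$ such that $f$ is homotopic to a map landing in $\RP^k$. Equivalently, $\secat[\phi]\le n-1$ iff the obstruction to compressing $f$ into $\RP^{n-1}$ (the $(n-1)$-skeleton of $\RP^\infty$) vanishes on the $n$-skeleton of $Y$; since $\dim Y=n$, the only obstruction lives in $H^n(Y;\pi_{n-1}(\RP^\infty/\RP^{n-1}))$. The first step is therefore to identify this obstruction group and the obstruction class precisely. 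We have $\RP^\infty/\RP^{n-1} = S^n \cup e^{n+1}\cup\cdots$, so $\pi_{n-1}$ of the pair $(\RP^\infty,\RP^{n-1})$ is $\pi_n(\RP^\infty/\RP^{n-1})$; by the cofiber sequence and Freudenthal, for $n$ even $\pi_n(\RP^\infty,\RP^{n-1})\cong\Z$, with the attaching map of $e^{n+1}$ being degree $2$ (since $H^{n+1}(\RP^\infty;\Z)$ detects $\Sq^1$ appropriately in even-to-odd dimension). So the primary obstruction $\mathfrak{o}_f\in H^n(Y;\Z)$ (twisted coefficients, but $n$ even forces the orientation twist to be trivial in the relevant sense — one must check this) reduces mod $2$ to $\alpha^n\in H^n(Y;\Z_2)$.

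The second step is the crux: granting $\alpha^n=0$, conclude $\mathfrak{o}_f=0$ in $H^n(Y;\Z)$. This is exactly where the hypothesis on $H^n(Y;\Z)$ having no $\Z_{2^k}$ summands with $k>1$ enters. Since the mod-$2$ reduction of $\mathfrak{o}_f$ is $\alpha^n=0$, the class $\mathfrak{o}_f$ lies in the kernel of $H^n(Y;\Z)\to H^n(Y;\Z_2)$, i.e. in $2\cdot H^n(Y;\Z)$. Moreover $\mathfrak{o}_f$ is itself a $2$-torsion class — this should follow from the fact that the next attaching map in $\RP^\infty/\RP^{n-1}$ has degree $2$, so $2\mathfrak{o}_f$ is the obstruction pulled back from $\RP^n$, which vanishes for dimension reasons, OR more directly from $2\alpha=0$ forcing $2\mathfrak{o}_f=0$ after chasing the Bockstein. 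Thus $\mathfrak{o}_f\in 2H^n(Y;\Z)\cap\{x:2x=0\}$. Writing $H^n(Y;\Z)$ as a direct sum of $\Z$'s, odd torsion, $\Z_2$'s, and (by hypothesis, none) higher $2$-power torsion: on the free part $2x=0\Rightarrow x=0$; on odd torsion $2$ is invertible so again $x=0$; on a $\Z_2$ summand the image of multiplication by $2$ is zero. Hence $\mathfrak{o}_f=0$, the compression exists, and ${\mathfrak{genus}}(\alpha)\le n-1$.

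The main obstacle I anticipate is pinning down the coefficient system and the sign/twisting issues in the obstruction class, and verifying that $\mathfrak{o}_f$ is genuinely $2$-torsion and reduces to $\alpha^n$ mod $2$ — the cleanest route is to use the universal example: run obstruction theory for the inclusion $\RP^{n-1}\hookrightarrow\RP^\infty$ itself, where the obstruction to the identity map of $\RP^\infty$ compressing into the $(n-1)$-skeleton is a specific class in $H^n(\RP^\infty;\pi_{n-1})$, compute it there once and for all, and then pull back along $f$. The evenness of $n$ is used twice: to make $\pi_{n-1}(\RP^\infty/\RP^{n-1})\cong\Z$ (rather than $\Z_2$, which occurs for $n$ odd) and to trivialize the twisting, and the arithmetic hypothesis on $H^n(Y;\Z)$ is used exactly to kill the residual ambiguity between $\mathfrak{o}_f$ and its mod-$2$ reduction.
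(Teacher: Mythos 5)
Your proposal is correct, and it rests on the same core mechanism as the paper's proof: the only obstruction to ${\mathfrak {genus}}(\alpha)\le n-1$ is an integral class in $H^n(Y;\Z)$ (untwisted because $n$ is even) which is annihilated by $2$ and whose mod-$2$ reduction is $\alpha^n$; the hypothesis on $H^n(Y;\Z)$ then kills it by exactly the arithmetic in your last step, which coincides with the paper's. The packaging, however, is genuinely different. The paper uses Schwarz's criterion that $\secat[\phi]\le n-1$ iff the $n$-fold fibrewise join of $\phi$ admits a section, and invokes Schwarz's product theorem to identify the primary obstruction with the $n$-th cup power ${\sf w}^n$ of a degree-one Bockstein class ${\sf w}\in H^1(Y;I_\alpha)$ satisfying $2{\sf w}=0$; the torsion and reduction properties of the obstruction are then inherited from ${\sf w}$. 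You instead compress the classifying map $f:Y\to\RP^\infty$ into $\RP^{n-1}$ and pull back the universal obstruction, which lives in $H^n(\RP^\infty;\Z)\cong\Z_2$ and must be the nonzero element $y^{n/2}$ (where $y$ is the integral Bockstein of the generator $\beta$, so $y^{n/2}$ reduces mod $2$ to $\beta^n$), whence both $2$-torsion and reduction to $\alpha^n$ follow by naturality. Your route is arguably more self-contained for the even-dimensional case, since it bypasses the join and product-formula machinery; the paper's route has the advantage of handling the odd-dimensional companion lemma (with $I_\alpha$-coefficients) by the same argument, where the twisting is genuinely present and there is no untwisted universal class in $H^n(\RP^\infty;\Z)$ to pull back.

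A few intermediate statements should be tidied when you write this up. The obstruction group is $H^n\bigl(Y;\pi_n(\RP^\infty,\RP^{n-1})\bigr)$, and $\pi_n(\RP^\infty,\RP^{n-1})\cong\pi_{n-1}(S^{n-1})\cong\Z$ for \emph{every} $n\ge 2$; what depends on the parity of $n$ is the $\pi_1$-action (by degree $(-1)^n$), not the isomorphism type of the group, so your parenthetical ``$\Z_2$ for $n$ odd'' is off. Likewise, for $n$ even the attaching map of $e^{n+1}$ in $\RP^\infty/\RP^{n-1}$ has degree $0$, not $2$. Neither slip affects the even case you are proving, and the universal-example computation you single out as the cleanest route does make every step you flagged rigorous.
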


To state Lemma \ref{doublecovera} which is the odd-dimensional version of Lemma \ref{doublecover}, we need to introduce the following notation. 
Given a cohomology class $\alpha\in H^1(Y;\Z_2)$, let $I_\alpha$ denote the local coefficient system of groups $\Z$ over $Y$ such that its monodromy along any loop $\gamma$ in $Y$ equals 
$$(-1)^{\alpha(\gamma)}\in \{1, -1\}.$$ 
Here $\alpha(\gamma)\in \Z_2=\{0,1\}$ and $(-1)^0=1$ and $(-1)^1=-1$. In other words, the monodromy of $I_\alpha$ along a loop $\gamma$ is nontrivial iff the value of the class $\alpha$ on $\gamma$ is nonzero. 

\begin{lemma} \label{doublecovera} 
Let $Y$ be a finite cell complex of odd dimension $n\ge 3$, 
and let $\alpha\in H^1(Y;\Z_2)$ be a nonzero cohomology class. 
Assume that the cohomology group $H^n(Y;I_\alpha)$ 
has no direct summands of type $\Z_{2^k}$ with $k>1$. 
If $\h(\alpha)\le n-1$ then
\begin{eqnarray}
{\mathfrak {genus}}(\alpha) = \secat[\phi:\tilde Y\to Y] \, \le n-1.
\end{eqnarray}
\end{lemma}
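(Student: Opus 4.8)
\textbf{Proof plan for Lemma \ref{doublecovera}.}

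The plan is to mimic the obstruction-theoretic argument used for the even-dimensional case (Lemma \ref{doublecover}), with the homology replaced by twisted coefficients. The genus $\mathfrak{genus}(\alpha)$ equals $\secat[\phi:\tilde Y\to Y]$, and the latter is bounded above by $n-1$ precisely when the $n$-th power of the primary obstruction to a section of $\phi$ vanishes. For a $2$-fold cover the fibre is the two-point set $F=\{0,1\}$, and the relevant local system is $\tilde H_0(F)\cong I_\alpha$ (the reduced homology of the fibre, which picks up a sign change exactly when $\alpha$ is nonzero on a loop). Thus the single obstruction to a section lives in $\mathfrak o\in H^1(Y;I_\alpha)$, and by Schwarz's theorem $\secat[\phi]\le n-1$ is equivalent to $\mathfrak o^{\,n}=0$ in $H^n(Y;I_\alpha^{\otimes n})$. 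Since $n$ is odd, $I_\alpha^{\otimes n}\cong I_\alpha$, so we must show $\mathfrak o^{\,n}=0\in H^n(Y;I_\alpha)$.

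First I would identify $\mathfrak o^{\,n}$ with a product involving $\alpha$. One checks that the reduction of $\mathfrak o$ modulo $2$ is $\alpha\in H^1(Y;\Z_2)$, and that cup products of $\mathfrak o$ with itself in the twisted coefficients, reduced mod $2$, give powers of $\alpha$; so $\mathfrak o^{\,n}\bmod 2 = \alpha^{\,n}$. By hypothesis $\h(\alpha)\le n-1$, i.e. $\alpha^{\,n}=0$, so $\mathfrak o^{\,n}$ lies in the image of the Bockstein, equivalently $\mathfrak o^{\,n}$ is a $2$-torsion class in $H^n(Y;I_\alpha)$ that is divisible by $2$ — more precisely, $\mathfrak o^{\,n}=2\,\beta$ for some $\beta\in H^n(Y;I_\alpha)$ would force $\mathfrak o^{\,n}$ to vanish if $H^n(Y;I_\alpha)$ has no $\Z_{2^k}$ summand with $k>1$, since in a finitely generated abelian group with no such summand any element annihilated by $2$ is not divisible by $2$ unless it is zero. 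I would make this divisibility statement precise via the long exact sequence associated with $0\to I_\alpha\xrightarrow{2} I_\alpha\to \Z_2\to 0$: vanishing of $\alpha^n$ means $\mathfrak o^{\,n}$ comes from $H^n(Y;I_\alpha)$ under multiplication by $2$, i.e. is $2$-divisible; being itself $2$-torsion (as $2\mathfrak o=0$ already in $H^1$, hence $2\mathfrak o^{\,n}=0$), it must vanish under the structural hypothesis on $H^n(Y;I_\alpha)$.

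The main obstacle I anticipate is twofold. The first delicate point is the precise relationship $\mathfrak o\bmod 2=\alpha$ and its compatibility with cup products in twisted coefficients: one must be careful that the cup-product pairing $H^\ast(Y;I_\alpha)\otimes H^\ast(Y;I_\alpha)\to H^\ast(Y;I_\alpha\otimes I_\alpha)=H^\ast(Y;\Z)$ (even powers) and $\to H^\ast(Y;I_\alpha)$ (odd powers) reduces mod $2$ to the ordinary cup product in $H^\ast(Y;\Z_2)$, which is standard but must be invoked correctly. The second, and genuinely arithmetic, point is extracting the vanishing from the hypothesis on $H^n(Y;I_\alpha)$: I would phrase it as the elementary fact that in a finitely generated abelian group $A$ all of whose $2$-primary torsion is elementary (a sum of $\Z_2$'s), an element $x\in A$ with $2x=0$ that also lies in $2A$ must be $0$. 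This mirrors exactly the role played by the hypothesis on $H^n(Y;\Z)$ in the even-dimensional Lemma \ref{doublecover}, so the argument there transplants with $\Z$ replaced by $I_\alpha$ throughout; the genus formula $\mathfrak{genus}(\alpha)=\secat[\phi:\tilde Y\to Y]$ was already recorded in the excerpt, so nothing extra is needed to pass between the two quantities.
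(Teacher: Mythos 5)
Your proposal is correct and follows essentially the same route as the paper: Schwarz's join theorem reduces the claim to $\mathsf{w}^n=0$ for the order-two homological obstruction $\mathsf{w}\in H^1(Y;I_\alpha)$, the reduction $r_\ast$ coming from $0\to I_\alpha\stackrel{\times 2}{\to} I_\alpha\to\Z_2\to 0$ sends $\mathsf{w}^n$ to $\alpha^n$, and $\alpha^n=0$ forces $\mathsf{w}^n$ to be $2$-divisible, hence zero under the structural hypothesis on $H^n(Y;I_\alpha)$. The only step you assert without argument is $2\mathsf{w}=0$, which the paper derives from the augmentation sequence $0\to I\to\Z[G]\to\Z\to 0$ (the invariant generator $1+t$ of $H^0(Y;\Z[G])$ augments to $2$); this is a standard fact and easily supplied.
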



\begin{proof}[Proof of Lemma \ref{doublecover} and Lemma \ref{doublecovera}] According to a theorem of A. Schwarz \cite{S}, one has $\secat[\phi:\tilde Y\to Y]\le  n-1$ if and only if the $n$-fold fibrewise join 
$$\phi_n: \ast^n_Y\tilde Y\to Y$$
of $n$ copies of the covering $\phi: \tilde Y\to Y$ admits a continuous section. 
The fibre of $\phi_n$ is the join $S^0\ast S^0\ast\dots \ast S^0= (S^0)^{\ast n}$ of $n$ copies of 
$S^0$, it is homeomorphic to $S^{n-1}$ and is $(n-2)$-connected. 
The primary obstruction $\mathfrak o_n$ for a section of $\phi_n$ lies in the cohomology group 
\begin{eqnarray}\label{eq:obstr}
\mathfrak o_n\in H^n(Y; \pi_{n-1}((S^0)^{\ast n}))\simeq H^n(Y; H_{n-1}((S^0)^{\ast n};\Z)),
\end{eqnarray} 
where the coefficients $H_{n-1}((S^0)^{\ast n};\Z)\simeq \Z$ are understood as a local coefficient system. In fact, for dimensional reasons, ${\mathfrak o}_n$ is the only obstruction, i.e. the section of $\phi_n$  exists iff ${\mathfrak o}_n=0$. 

According to Schwarz \cite{S}, the primary obstruction $\mathfrak o_n$ can be computed as follows. 

Let $G=\{1, t\}$ denote the multiplicative cyclic group of order 2, i.e. $t^2=1$. 
The class $\alpha\in H^1(Y;\Z_2)$ determines a group homomorphism $\pi_1(Y)\to G$, where $g\mapsto t^{\alpha(g)}$, which allows to view the group ring $\Z[G]$ as a local coefficient system over $Y$. If $\epsilon : \Z[G]\to \Z$ is the augmentation and $I=\ker \epsilon$ denotes the augmentation ideal, then 
\begin{eqnarray}\label{eq:exsec}
0\to I\to \Z[G]\stackrel{\epsilon}\to \Z\to 0
\end{eqnarray}
is an exact sequence of local systems over $Y$. 

As an abelian group, the ideal $I$ is isomorphic to $\Z$, it has generator $t-1$ and the action of $\pi_1(Y)$ on the generator 
$(t-1)\in I$ is as follows: the action of a group element
 $g\in \pi_1(Y)$ on $(t-1)$ equals $$g\cdot (t-1) = t^{\alpha(g)}\cdot (t-1).$$ Explicitly, this means that 
if $g\in \pi_1(\tilde Y)\subset \pi_1(Y)$, i.e. $\alpha(g) =0$, then $g\cdot (t-1)=(t-1)$ and if $g\notin \pi_1(\tilde Y)$, i.e. 
$\alpha(g) =1$, then $g\cdot(t-1)=t(t-1)=-(t-1)$. Thus, we see that {\it the local system $I$ is isomorphic to $I_\alpha$}. 

The short exact sequence (\ref{eq:exsec}) gives a long exact sequence
\begin{eqnarray}\label{eq:58}
H^0(Y;I)\to H^0(Y;Z[G])\stackrel{\epsilon}\to H^0(Y;\Z)\stackrel{\delta}\to H^1(Y;I).
\end{eqnarray}
Here $\delta$ is the Bockstein homomorphism. 

The class ${\sf w}=\delta(1)\in H^1(Y;I)$ is {\it the homological obstruction}, in terminology of \cite{S}. 
According to the general theory, see \cite{W}, chapter 6, the cohomology class $\sf w$ is represented by a cross-homomorphism
$\psi: \pi_1(Y)\to I$, which is given by $\psi(g)=t^{\alpha(g)}-1\in I$. 
Hence 
\begin{eqnarray}\label{eq:59}
\psi(g) = \left\{
\begin{array}{lll}
0, &\mbox{if}& \alpha(g)=0,\\
t-1, &\mbox{if}& \alpha(g)=1.
\end{array}
\right.
\end{eqnarray}

Applying Theorem 3.2 from \cite{W}, chapter 6, we obtain that in the exact sequence (\ref{eq:58}) one has $H^0(Y;I)=0$ and $H^0(Y;\Z[G])\simeq \Z$. 
The latter group is generated by the class of the invariant element $1+t\in G$. Since $\epsilon(1+t)=2$ we conclude that the homological obstruction $\sf w$ has order 2, i.e. 
\begin{eqnarray}\label{eq:60}
2\cdot {\sf w} =0. 
\end{eqnarray}

Theorem 2 of Schwarz \cite{S} states that the local coefficient system $H_{n-1}((S^0)^{\ast n};\Z)$ is isomorphic to the tensor power 
$I^{\otimes n}=I\otimes I\otimes I\otimes\cdots\otimes I$ ($n$ factors)
 and the primary obstruction $\mathfrak o_n$ equals the $n$-th power of the homological obstruction $\sf w$, i.e.
\begin{eqnarray}
\mathfrak o_n= { \sf w}^n\in H^n(Y; I^{\otimes n}). 
\end{eqnarray}

We show below that ${\sf w}^n=0\in H^n(Y; I^{\otimes n})$ if and only if $\alpha^n=0\in H^n(Y;\Z_2)$, and therefore $\secat[\phi:\tilde Y\to Y]\le n-1$ if and only if 
$\alpha^n=0$.

Note that $I\otimes I\simeq \Z$, isomorphism of local coefficient systems. Hence the local system $I^{\otimes n}$ is the trivial local system $\Z$ for $n$ even, and $I^{\otimes n}=I=I_\alpha$ for $n$ odd. 

Next we consider the homomorphism $r: I\to \Z_2$ of the local systems (where $\Z_2$ is a constant local system) given by $r(t-1)=1$. Every element of $I$
has the form $x=k(t-1)$ with $k\in \Z$ and $r(x)\equiv k \mod 2$. The induced homomorphism 
$$
r_\ast: H^1(Y;I)\to H^1(Y;\Z_2)
$$
satisfies $r_\ast({\sf w})=\alpha$ as follows from (\ref{eq:59}). Thus, we see that ${\sf w}^n=0$ implies $\alpha^n=0$. 

To show the inverse implication, we first assume that $n\ge 2$ is even. Then $I^{\otimes n}=\Z$ and ${\sf w}^n\in H^n(Y;\Z)$. 
We have the exact sequence 
$$H^n(Y;\Z)\, \stackrel{\times 2}\to \, H^n(Y;\Z)\, \stackrel{r_\ast}\to \, H^n(Y;\Z_2)$$
with $r_\ast({\sf w}^n)=\alpha^n$. We see that if $\alpha^n=0$ then the class ${\sf w}^n$ must be divisible by $2$. On the other hand, we have $2 \cdot {\sf w}^n=0$, because of (\ref{eq:60}). Our assumption on the absence of summands of type $\Z_{2^k}$ with $k>1$ in the group $H^n(Y;\Z)$ implies that 
${\sf w}^n=0$. 

Consider now the case when $n$ is odd. Then $I^{\otimes n}\simeq I$
and the exact sequence of local coefficient systems over $Y$
$$
0\to I\stackrel{\times 2}\to I\stackrel{r} \to \Z_2\to 0.
$$
gives the cohomological sequence 
$$
H^n(Y;I)\, \stackrel{\times 2}\to \, H^n(Y;I)\, \stackrel{r_\ast}\to \, H^n(Y;\Z_2).
$$
We see  that ${\sf w}^n \in H^n(Y;I)$ and 
$r_\ast({\sf w}^n)=\alpha^n$. If $\alpha^n=0$ then the class ${\sf w}^n$ must be divisible by $2$. 
On the other hand this class has order $2$ and we obtain ${\sf w}^n=0$, using our assumption about the absence of summand of type $\Z_{2^k}$ with $k>1$ in the cohomology group $H^n(Y;I)$. This completes the proof. 
\end{proof}

The following result is a generalisation of Theorem 3.5 of \cite{Ber}. 

\begin{theorem} \label{maxsecatmanifold} Let $Y$ be a closed manifold of dimension $n\ge 2$. Then, for a nonzero cohomology class $\alpha\in H^1(Y;\Z_2)$, one has ${\mathfrak {genus}}(\alpha)=n$ if and only if $\alpha^n\neq 0$. 
\end{theorem}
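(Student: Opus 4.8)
The plan is to deduce Theorem \ref{maxsecatmanifold} from Lemma \ref{doublecover} and Lemma \ref{doublecovera}, so the task reduces to verifying the hypotheses of those lemmas when $Y$ is a closed manifold. One direction is already contained in the standard bounds (\ref{standardbounds}): if $\alpha^n\neq 0$, then $\h(\alpha)=n$ and hence ${\mathfrak {genus}}(\alpha)\ge n$; combined with ${\mathfrak {genus}}(\alpha)\le \dim Y=n$ this gives ${\mathfrak {genus}}(\alpha)=n$. So the content is the converse: assuming $\alpha^n=0$ we must show ${\mathfrak {genus}}(\alpha)\le n-1$.

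First I would dispose of the low-dimensional degenerate situations and the case $n=1$ separately if needed (for $n=1$ a nonzero class on $S^1$ gives ${\mathfrak {genus}}=1=\h(\alpha)$, so the statement is trivially true; the lemmas are stated for $n\ge 2$). For $n\ge 2$, the key step is to identify the relevant top cohomology group and check it has no $\Z_{2^k}$-summands with $k>1$. If $Y$ is orientable then $H^n(Y;\Z)\cong \Z$ (connected case) or is free (disconnected case), so Lemma \ref{doublecover} applies directly when $n$ is even; and for $n$ odd one checks that $H^n(Y;I_\alpha)$ is either $\Z$ or $\Z_2$ by Poincaré duality with the appropriate twisted coefficients — in any case it contains no $\Z_{2^k}$ with $k>1$ — so Lemma \ref{doublecovera} applies. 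If $Y$ is non-orientable, then $H^n(Y;\Z)\cong \Z_2$, which again has no $\Z_{2^k}$-summand with $k>1$ (it is $\Z_2$ itself, $k=1$), so Lemma \ref{doublecover} still applies for $n$ even; for $n$ odd one argues with $H^n(Y;I_\alpha)$, using Poincaré duality: $H^n(Y;I_\alpha)\cong H_0(Y;I_\alpha\otimes \mathfrak{o}_Y)$ where $\mathfrak{o}_Y$ is the orientation local system, and this $H_0$ with $\Z$-coefficients is either $\Z$ (when the twisting cancels) or $\Z_2$ (otherwise), in both cases free of bad $2$-torsion.

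The main obstacle I expect is a clean, uniform verification that the relevant top cohomology group — $H^n(Y;\Z)$ for $n$ even and $H^n(Y;I_\alpha)$ for $n$ odd — never has a summand $\Z_{2^k}$ with $k>1$, handling simultaneously the orientable/non-orientable and connected/disconnected cases. The cleanest route is Poincaré–Lefschetz duality with local coefficients: for a closed $n$-manifold $Y$ and any local system $\mathcal L$ of abelian groups, $H^n(Y;\mathcal L)\cong H_0(Y;\mathcal L\otimes\mathfrak{o}_Y)$, and the right-hand side is a quotient of $\mathcal L\otimes\mathfrak o_Y$ (restricted to a single component) by the augmentation relations, hence is a cyclic group — either $\Z$ or $\Z_2$ — which in particular has no $\Z_{2^k}$ with $k\ge 2$. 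Applying this with $\mathcal L=\Z$ (for $n$ even, where $I^{\otimes n}$ is trivial) and $\mathcal L=I_\alpha$ (for $n$ odd) confirms the hypothesis in every case, and then Lemma \ref{doublecover} (resp. Lemma \ref{doublecovera}) yields ${\mathfrak {genus}}(\alpha)\le n-1$, completing the proof. A secondary, purely bookkeeping point will be the reduction to the connected case, since a closed manifold may be disconnected; one restricts $\alpha$ to each component and takes the maximum of the genera, noting that $\alpha^n=0$ globally iff it holds on each component.
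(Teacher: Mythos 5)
Your proposal is correct and follows essentially the same route as the paper: one direction comes from the standard bounds $\h(\alpha)\le{\mathfrak{genus}}(\alpha)\le\dim Y$, and the converse is obtained by applying Lemma \ref{doublecover} for $n$ even (where $H^n(Y;\Z)$ is $\Z$ or $\Z_2$) and Lemma \ref{doublecovera} for $n$ odd, verifying the torsion hypothesis via Poincar\'e duality with the twisted coefficients $I_\alpha\otimes\tilde\Z$. Your extra remark on reducing to the connected case is a harmless refinement the paper leaves implicit.
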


\begin{proof} If the dimension $n$ is even we apply Lemma \ref{doublecover}. The top cohomology group $H^n(Y;\Z)$ is either $\Z$ or $\Z_2$ and hence the assumption of Lemma \ref{doublecover} is fulfilled. 

In the case when the dimension $n$ is odd we apply Lemma \ref{doublecovera}. We need to verify that the cohomology group $H^n(Y; I_\alpha)$ 
has no direct summands of type $\Z_{2^k}$ with $k>1$. Applying the Poincar\'e duality, we get 
$H^n(Y;I_\alpha)\simeq H_0(Y; I_\alpha\otimes \tilde \Z)$ where 
where $\tilde \Z$ is the orientation local system of the manifold $Y$. We observe that $\tilde \Z$ coincides with the local coefficient system $I_{\alpha'}$ where 
$\alpha'=w_1(Y) \in H^1(Y;\Z_2)$, the first Stiefel - Whitney class of $Y$. 
We find that $H^n(Y; I_\alpha)=H_0(Y;\Z)=  \Z$ iff $\alpha=w_1(Y)$ and $H^n(Y;I_\alpha)=\Z_2$ if $\alpha\not=w_1(Y)$. Hence, we see that the assumption of Lemma 
\ref{doublecovera} is satisfied. 

This completes the proof. 
\end{proof}

\end{appendix}

 \bibliographystyle{amsplain}

\end{document}